\documentclass{amsart}
\usepackage{amsmath,amsfonts,amssymb,amsthm,esint, graphicx}

\usepackage[colorlinks,citecolor=blue]{hyperref}

\newtheorem{theorem}{Theorem}[section]
\newtheorem{definition}[theorem]{Definition}

\newtheorem{lemma}[theorem]{Lemma}
\newtheorem{proposition}[theorem]{Proposition}
\newtheorem{corollary}[theorem]{Corollary}

\theoremstyle{remark}
\newtheorem{remark}[theorem]{Remark}
\newtheorem{example}[theorem]{Example}

\numberwithin{equation}{section}
\numberwithin{figure}{section}

\newcommand{\R}{\mathbb{R}}
\newcommand{\N}{\mathbb{N}}
\renewcommand{\P}{\mathbb{P}}

\newcommand{\supp}{\operatorname{supp}}
\newcommand{\diam}{\operatorname{diam}}
\newcommand{\Img}{\operatorname{Im}}
\newcommand{\diag}{\operatorname{diag}}
\newcommand{\Span}{\operatorname{span}}

\newcommand{\Id}{\mathrm{Id}}
\newcommand{\fone}{\mathbf{1}}
\newcommand{\vone}{\mathbf{1}}
\newcommand{\eqset}{:=}

\begin{document}
	\title[Signature of squared distance matrices for countable MMS]{On the signature of squared distance matrices of metric measure spaces}

	\author{Alexey Kroshnin}
	\address{
		The Weierstrass Institute for Applied Analysis and Stochastics, 
		Mohrenstraße 39, 10117 Berlin, Germany
	}
	\email{alex.kroshnin@gmail.com}

	\author{Tianyu Ma}
	\address{
		Faculty of Mathematics, HSE University, Usacheva ul. 6, 119048 Moscow, Russian Federation 
	}
	\email{tma@hse.ru}

	\author{Eugene Stepanov}
	\address{
		Dipartimento di Matematica, Universit\`a di Pisa,
		Largo Bruno Pontecorvo 5, 56127 Pisa, Italy
		\and 
		St.Petersburg Branch of the Steklov Mathematical Institute of the Russian Academy of Sciences,
		St.Petersburg, 
		Russian Federation
		\and
		HSE University, Moscow, Russian Federation		
	}
	\email{stepanov.eugene@gmail.com}

	\thanks{	
		E.S.\ acknowledges the MIUR Excellence Department Project awarded to the Department of Mathematics, University of Pisa, CUP I57G22000700001.	
		The work is also partially within the framework of HSE University Basic Research Program.
		}
	
	\dedicatory{Dedicated to the memory of Anatoly Moiseevich Vershik}
	
	\begin{abstract}
		We consider the numbers of positive and negative eigenvalues of matrices of squared distances between randomly sampled i.i.d.\ points in a given metric measure space. 
		These numbers and their limits, as the number of points grows, in fact contain some important information about the whole space. 
		In particular, by knowing them, we can determine whether this space can be isometrically embedded in the Hilbert space.
		We show that the limits of these numbers exist almost surely, are nonrandom and the same for all Borel probability measures of full support, and, moreover, are naturally related to the operators defining the multidimensional scaling (MDS) method. 
		We also relate them to the signature of the pseudo-Euclidean space in which the given metric space can be isometrically embedded. 
		In addition, we provide several examples of explicit calculations or just estimates of those limits for sample metric spaces. 
		In particular, for a large class of countable spaces (for instance, containing all graphs with bounded intrinsic metrics), we get that the number of negative eigenvalues increases to infinity as the size of samples grows. 
		However, we are able to provide examples when the number of samples grows to infinity and the numbers of both negative and positive eigenvalues increases to infinity, or the number of positive eigenvalues is bounded (but as large as desired), and the number of positive ones is fixed.
		Finally, we consider the example of the universal countable Rado--Erd{\H o}s--R{\'e}nyi graph. 	
	\end{abstract}
	
	\maketitle

	\section{Introduction}
	
	Let $(X,d)$ be a metric space (i.e., $X$ a nonempty set equipped with a distance $d$) and $\mu$ be a Borel probability measure on $X$.
	We further assume, without loss of generality, that $(X,d)$ is separable. 
	The so-called \textit{learning} problem for the metric measure space $(X,d,\mu)$ is to recover the information on the
	triple $(X,d,\mu)$ from the information on distances between points of an appropriately chosen subset of $X$.
	In view of the famous result of M.~Gromov (so-called ``\textit{mm-reconstruction theorem}''~\cite[section $3.\frac{1}{2}.5$]{Gromov-metricRiem99}) 
	further reproved and generalized by A.~Vershik in~\cite{Vershik-learningMMS-UMN98} and~\cite{Vershik-randomMMS04a}, this problem is completely solved. 
	A Polish (i.e.,\ separable and homeomorphic to complete) space $(X,d,\mu)$ can be reconstructed from distances between points obtained from the following randomized procedure:
	for every $m\in \N$, one chooses i.i.d.\ random points $\xi_i \in X$ drawn from the distribution $\mu$, $i=1,\ldots, m$, thus forming a finite random sequence of points $X_m \eqset (\xi_i)_{i=1}^m$, and calculates 
	the $m\times m$ random distance matrix $(d(\xi_i,\xi_j))_{i,j=1}^m$. The joint distribution law of all these matrices (i.e.,\ for all $m\in \N$) then determines $(X,d,\mu)$ uniquely up to a measure-preserving isometry. To reconstruct the space $(X,d,\mu)$, this approach requires the knowledge of the statistics of random distance matrices of all sizes, which is quite difficult to accomplish in practice. The natural question is to what extent one may limit himself to the statistics of something less than the whole set of distance matrices, e.g., to the spectra of the latter, or even to some function of the spectra (e.g., the numbers of positive and negative eigenvalues). In other words, what kind of information on $(X,d,\mu)$ is encoded by the spectra of random distance matrices, or even by some functions of the spectra. Several attempts to answer these questions have been made, even numerically in~\cite{BogomolnyBohigasSchmit-disteigen08}, and for some functions of Euclidean distances in~\cite{Bordenave-EuclDistMatr08,Zeng-disteigen14,Zeng-disteigen22,Jiang-disteigen15}. However, they still remain very far from even being partially understood.
	
	In this paper, we mainly concentrate on information on signatures. More precisely, we focus on the numbers of positive and negative eigenvalues
	of random \textit{squared} distance matrices $(d^2(\xi_i,\xi_j))_{i,j=1}^m$, as $m\to\infty$. The methods in the above-cited Gromov--Vershik result on distance matrices can be applied to our research on squared distances at little cost: the joint distribution law of squared distance
	(or any monotone function of a distance) matrices still determines $(X,d,\mu)$ uniquely up to a measure-preserving isometry. However, working with $d^2$ instead of $d$ looks much more natural precisely in terms of the relationship between the spectra of the respective matrices and the properties of these metric measure spaces. For instance, it follows from the Schoenberg theorem~\cite[theorem 3.1]{alfakih2018euclidean} that all matrices
	$(d^2(x_i,x_j))_{i,j=1}^m$ with $x_i\in X$, $m\in \N$ are conditionally negative semidefinite (i.e.,\ negative semidefinite on the hyperplane $x_1+\ldots+x_m=0$) 
	if and only if $(X,d)$ is isometrically embeddable in a Hilbert space. 
	In this case, when $X$ is finite, an embedding can be explicitly reconstructed using the \textit{multidimensional scaling} (MDS) algorithm well known in data science~\cite{wang2012geometric}. 
	Thus, we prefer to state our results in terms of the signatures of the matrices 
	\[
	S(X_m) \eqset -\frac{1}{2} \left(d^2(\xi_i,\xi_j)\right)_{i,j=1}^m. 
	\]
	The purely ``aesthetic'' coefficient $-1/2$ here is only to reflect the fact that exactly these matrices are involved directly in the MDS algorithm.
	
	We will show that 
	the random numbers $s_+(S(X_m))$ of positive and $s_-(S(X_m))$ of negative random eigenvalues of 
	matrices $S(X_m)$, 
	almost surely converge to some fixed numbers $s_+$ and $s_-$ respectively, and
	provide several ways to compute them.
	In particular, we show that under a rather mild condition on $(X,d,\mu)$, the numbers 
	$s_+$ and $s_-$ count the numbers of positive and negative eigenvalues of the operator $K\colon L^2(X,\mu)\to L^2(X,\mu)$ defined by the formula
	\[
	(Ku)(x)\eqset -\frac{1}{2}\int_X d^2(x,y) u(u)\, d\mu(y).
	\]
	Therefore, we write in this case $s_\pm=s_\pm(K)$.
	This is true when $\mu$ has finite $4$-th order moment, i.e.,\ 
	\[
	\int_X d^4(x_0,y) \, d\mu(y)<+\infty
	\]
	for some (and hence for all) $x_0\in X$, in particular, when $(X,d)$ is bounded.
	Under this condition, the spectra of $S(X_m)$ 
	converge almost surely as $m\to\infty$ in many reasonable senses to the spectrum of $K$~\cite{KoltchinskiiGine-RandomIntOp2000}. The operator $K$ 
	is involved in the construction of MDS for general metric measure spaces $(X,d,\mu)$ as the limit of MDS for finite samples from $X$ when the number of samples grows. We will show further that 
	\begin{itemize}
		\item[(i)] $s_+$ and $s_-$ do not depend on $\mu$ if the latter has full support, i.e.,\ $\supp\mu=X$;
		\item[(ii)] The number $s_+$ may be viewed as a natural lower bound for the dimension of the MDS embedding of $(X,d,\mu)$. 
		More generally, the couple of numbers $(s_-, s_+)$ give the lower bounds for the signature of a pseudo-Euclidean space in which $(X,d)$ can be isometrically embedded (the respective embedding can be constructed explicitly). These bounds are in fact almost tight in the sense
		that $(X,d,\mu)$ cannot be isometrically embedded in a pseudo-Euclidean space with signature $(\sigma_-,\sigma_+)$ 
		with $\sigma_\pm < s_\pm -1$.
		As an easy corollary, we get that every finite metric space can be isometrically embedded in a pseudo-Euclidean one;
		\item[(iii)] one has that $s_+=+\infty$ when $(X,d)$ is bounded, $X$ is countable 
		and $d$ is separated from zero, e.g., for
		countable graphs equipped with bounded intrinsic distance. As a corollary, the embedding of $(X,d,\mu)$ produced by MDS is not finite-dimensional.
	\end{itemize}
	We also
	\begin{itemize}
		\item[(iv)] provide several examples of calculations of $s_\pm$ for particular metric spaces. 
		In many interesting cases of infinite metric spaces, we get $s_+=s_-=+\infty$. 
		\item[(v)]
		However, we provide examples of metric spaces with arbitrarily large finite $s_-$. It seems easier to provide examples 
		when both $s_+$ and $s_-$ are large. This corresponds well to the observation of~\cite{Charles2013} about so-called
		hollow symmetric non-negative (HSN) matrices (symmetric matrices with nonnegative entries and with zeros on the diagonal)
		that they ``normally'' have bigger number of negative rather than positive eigenvalues (recall that squared distance matrices are a particular case of HSN matrices). 
		\item[(vi)]
		Nevertheless, we are able to show a construction of a finite metric space with $s_-$ as large as desired and $s_+$ fixed
		(up to an error of at most $1$). In particular, this provides a construction of HSN matrices (even stronger, squared distance matrices)
		with arbitrarily large number of positive eigenvalues which is alternative to a seemingly much heavier construction of~\cite{Charles2013}.
		\item[(vii)] We also provide some natural estimates on $s_\pm$.
	\end{itemize}
	Finally,
	\begin{itemize}
		\item[(viii)] in the quite natural example of the Rado--Erd{\H o}s--R{\'e}nyi graph (which is a universal graph containing all the finite graphs as its proper subgraphs), one has $s_+=s_-=\infty$;
		\item[(ix)] however, the limits of the ratios $s_+(S(X_m))/s_-(S(X_m))$ as $m\to\infty$ may depend on $\mu$ as illustrated by the example of the Rado--Erd{\H o}s--R{\'e}nyi graph.
	\end{itemize}
	

	\section{Notation and preliminaries}
	
	\subsection{Notation}
	The metric measure space $(X,d,\mu)$ is always assumed to be separable with
	$\mu$ a Borel probability measure. 
	
	For a set $D$, we denote by $\fone_D$ its characteristic function. 
	If $D$ is a subset of a metric space, we let $\bar D$ stand for its closure.
	For a compact linear operator $K$ over some normed space, we denote by $s_\pm(K)$ the numbers of its positive and negative eigenvalues, respectively. In this paper, unless otherwise mentioned explicitly, the numbers of eigenvalues are always counted with multiplicity. 
	We write $\supp \mu$ for the support of a measure $\mu$.
	
	For a Hilbert space $H$ we denote by $(\cdot,\cdot)$ and $\|\cdot\|$ the respective scalar product and the Hilbert norm. 
	The same notation will stand for the (indefinite) scalar product and the norm (sometimes called ``interval'' in physics ) in a pseudo-Euclidean space. 
	For a linear operator $A$ between two Banach spaces, we denote by $\Img A$ its range, by $\ker A$ its kernel (i.e.\ the zero space) and by $A^*$ its adjoint. If $L$ is a subset of a linear space, then $\Span L$ stands for its linear span.
	
	By $L^p(X,\mu)$ we denote the usual Lebesgue space of
	integrable (with respect to $\mu$) with power $p\geq 1$ (or $\mu$-essentially bounded when $p=+\infty$) functions on $X$, the canonical norms in these spaces being denoted by $\|\cdot\|_p$. 
	Denote by $\fone^{\perp}$ the orthogonal complement of constant function $\fone$ in $L^2(X,\mu)$, and by $P_\mu$ the orthogonal projector from $L^2(X,\mu)$ to $\fone^{\perp}$. 
	Let
	\begin{align}
		\label{eqn:MDS opK}
		&(K_\mu)(x)\eqset -\dfrac{1}{2}\int_X d^2(x,y) u(y) d\mu(y)\\
		\label{eqn:MDS opT}	
		&T_\mu\eqset (P_\mu K_\mu P_\mu)(u),
	\end{align}
	be linear operators defined on $L^2(X,\mu)$.
	We usually avoid the subscript $\mu$ and write the above introduced operators as $P, K, T$ instead of
	$P_\mu, K_\mu, T_\mu$ respectively, unless we would like to emphasize their dependence on the measure $\mu$.
	
	If the $4$-th moment of $\mu$ is infinite, then the domains of $K$ and $T$ are not
	all of $L^2(X,\mu)$. In the opposite case, both $K$ and $T$ are self-adjoint Hilbert-Schmidt operators over the whole $L^2(X,\mu)$. 
	Throughout this paper, we call the operator $K$ and $T$ the MDS defining operators associated with the metric measure space $(X,d,\mu)$. The infinite MDS maps of $(X,d,\mu)$ are constructed from the positive eigenvalues and their corresponding $L^2(X,\mu)$-normalized eigenfunctions of $T$.
	Note that
	\begin{equation}\label{eq_defT1}
		\begin{aligned}
			(T f) (x) &= \int_X k_T(x, y) f(y) \,d \mu(y), \quad\mbox{where}\\
			k_T(x,y) &\eqset k(x, y) - \int_X k(x, y') \,d \mu(y') - \int_X k(x', y) \,d \mu(x') \\
			& \qquad \qquad
			+ \int_X \int_X k(x', y') \,d \mu(x') \,d \mu(y').
		\end{aligned}
	\end{equation}
	where for brevity we denoted $k(x,y)\eqset -d^2(x,y)/2$.
	In particular,~\eqref{eq_defT1} implies
	\begin{equation}\label{eq_defT2}
		k_T(x,x)+k_T(y,y)-2 k_T(x,y)= d^2(x,y).
	\end{equation}

	For the general metric measure space $(X,d,\mu)$ and $X_N\eqset \{x_i\}_{i=1}^N\subset X$ a finite \textit{subset}, we denote
	$S(X_N)\eqset \left(-\frac{1}{2}d^2(x_i,x_j)\right)_{i,j=1}^N$. Note that for a finite \textit{sequence}
	of i.i.d.\ random elements $\xi_i$ of $X$, we write $X_m\eqset (\xi_i)_{i=1}^m\subset X$ in lower-case index for the finite random sequence 
	and $S(X_m)\eqset \left(-\frac{1}{2}d^2(\xi_i,\xi_j)\right)_{i,j=1}^m$ for the respective random $m\times m$ matrix.
	It is important to note that $X_m$ does not necessarily always have exactly $m$ elements because some of $\xi_i$ may occasionally coincide. 
	Therefore, $S(X_m)$ can be different from $S(\hat{X}_N)$, where $\hat{X}_N$ is the \textit{set} of distinct elements of the \textit{sequence} $X_m$.\footnote{In what follows we maintain the following notation: if $X_m$ is a finite sequence with possibly repeating elements, then $\hat X_N$ will stand for the set of its elements, or, equivalently, the sequence obtained from $X_m$ by cancelling repetitions. However, if the sequence (random or deterministic) $X_N$ does not contain repetitions, i.e.,\ can be viewed as a set, then we do not write $\hat X_N$ in this case.} 
	
	We denote by $\vone_n$ the vector $\vone_n\eqset (1,\ldots, 1)^T\subset \R^n$ and by $\Id_n$ the identity $n\times n$ matrix
	(the superscript $T$ in this context stands for the transpose of matrices), and $\Pi_n \eqset \Id_n - \frac{\vone_n \vone_n^T}{n} \in \R^{n \times n}$.
	
	\subsection{Spectral theory for matrices}

	In the sequel, we frequently use the Cauchy interlacing theorem~\cite[theorem 17.17]{Mckee2021}. We state it here for convenience in the form we need in the sequel.
	
	\begin{proposition}[Cauchy interlacing theorem]
		Let $B$ be a real 
		symmetric
		matrix of order $N$. Let $\lambda_1\leq \lambda_2\leq \dots\leq \lambda_N$ be the eigenvalues of $B$ counting multiplicity. For any principal submatrix $B'$ of $B$ of order $N-1$, the eigenvalues $\lambda'_1\leq \lambda'_2\leq \dots\leq \lambda'_{N-1}$ of $B'$ are interlacing such that
		\[
		\lambda_i\leq \lambda'_i\leq \lambda_{i+1}\ \quad\mbox{for all } 1\leq i\leq N-1.
		\]
		In particular, for any real $\delta$, if the symmetric matrix $B$ has a principal block admitting at least $m$ eigenvalues greater (resp., less) than or equal to $\delta$, so does the matrix $B$.
	\end{proposition}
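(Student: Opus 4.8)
The plan is to reduce everything to the Courant--Fischer min--max characterization of eigenvalues. Recall that for a real symmetric matrix $C$ of order $n$ with eigenvalues $\mu_1\le\dots\le\mu_n$ one has, for each $i$,
\[
\mu_i=\min_{\dim V=i}\ \max_{0\ne x\in V}\frac{x^T C x}{x^T x}=\max_{\dim V=n-i+1}\ \min_{0\ne x\in V}\frac{x^T C x}{x^T x},
\]
the extrema running over linear subspaces $V\subset\R^n$ of the indicated dimension. I would first treat the case in which $B'$ is obtained from $B$ by deleting a single row and the corresponding column; since permuting the index set changes neither $B$ nor its eigenvalues, I may assume this is the last index, and identify $\R^{N-1}$ with the coordinate hyperplane $W\eqset\{x\in\R^N:x_N=0\}$, on which the quadratic form $x\mapsto x^T B x$ restricts to the quadratic form of $B'$.

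To obtain $\lambda'_i\le\lambda_{i+1}$, I would take an $(i+1)$-dimensional subspace $V\subset\R^N$ realizing the outer minimum in the first formula applied to $\lambda_{i+1}$, intersect it with $W$ — getting a subspace of dimension at least $i$ by the codimension-one count — and feed any $i$-dimensional piece of $V\cap W$ as a competitor into the min--max formula for $B'$. To obtain $\lambda_i\le\lambda'_i$, I would instead take an $i$-dimensional subspace of $W$ realizing $\lambda'_i$ and observe that, viewed inside $\R^N$, it is an admissible competitor in the min--max formula for $\lambda_i$. Neither step presents a genuine obstacle; the only point requiring care is the elementary dimension inequality $\dim(V\cap W)\ge\dim V-1$.

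For the ``in particular'' clause I would iterate the previous case: a principal block of order $N-k$ sits at the bottom of a chain of principal submatrices, each obtained from the previous one by removing a single row and the matching column, so it suffices to handle $k=1$. If $B'$ has at least $m$ eigenvalues $\ge\delta$, these are $\lambda'_{N-m}\le\dots\le\lambda'_{N-1}$, and the interlacing half $\lambda_{j+1}\ge\lambda'_j$ applied for $j=N-m,\dots,N-1$ forces $\lambda_{N-m+1},\dots,\lambda_N\ge\delta$, giving $m$ eigenvalues of $B$ above $\delta$. The ``$\le\delta$'' version follows symmetrically from the other half $\lambda_j\le\lambda'_j$ applied for $j=1,\dots,m$, or simply by applying the already proved case to $-B$. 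The main — and quite mild — difficulty throughout is bookkeeping: keeping the indices of the $m$ extreme eigenvalues aligned correctly, which is exactly what the two halves of the interlacing inequality are designed to do.
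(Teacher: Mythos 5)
The paper states this proposition purely as a recalled classical fact, with a citation to the literature and no proof of its own, so there is no in-house argument against which to compare your write-up. Your proof is correct and is the standard one: the Courant--Fischer min--max characterization combined with the codimension-one dimension count $\dim(V\cap W)\ge\dim V-1$ gives both halves of the interlacing for a one-row/one-column deletion, and the ``in particular'' clause follows by iterating that case along a chain of nested principal submatrices and chasing the indices through $\lambda_{j+1}\ge\lambda'_j$ (or its symmetric counterpart, or by passing to $-B$). One tiny imprecision: permuting the index set does change $B$ (to $P^TBP$), though of course not its spectrum; the intended meaning is clear, but the phrasing should say ``leaves the spectrum unchanged'' rather than ``changes neither $B$ nor its eigenvalues''.
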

	
	\section{Gromov--Vershik scheme}
	
	For a general metric measure space $(X,d,\mu)$, 
	we refer to the experiment consisting of choosing i.i.d.\ random points $\xi_i \sim \mu$ for $i=1,\ldots, m$, and calculating the random matrix $S(X_m)$ where $X_m$ is the finite random sequence of points $X_m\eqset (\xi_i)_{i=1}^m\subset X$, as the \textit{Gromov--Vershik scheme}. 
	Namely, let $(\Omega, \Sigma, \P)$ be a probability space and 
	$\xi_i\colon \Omega\rightarrow X$, $i\in \N$, be i.i.d.\ random elements of $X$ such that $\operatorname{law}(\xi_i)=\mu$ for all $i\in \N$. We set then $X_m(\omega)\eqset (\xi_i(\omega))_{i=1}^m$ for all $\omega\in\Omega$. 
	Note that $X_m$ is not necessarily a random \textit{set}, but rather a random finite sequence of elements of $X$, some of which may be repeating. cancelling the repeating elements of
	the latter, we obtain a random set $\hat X_N(\omega)\subset X$ with $N=N(m,\omega)\eqset \# X_m(\omega)$. We denote its elements by
	$\hat\xi_1(\omega),\ldots, \hat\xi_N(\omega)$, i.e., $\hat X_N(\omega)\eqset \{\hat \xi_i(\omega)\}_{i=1}^N$. We will then say that
	the random set $\hat X_N$ and the respective random matrix $S(\hat X_N)$ are obtained by \textit{Gromov--Vershik scheme without repetitions}.
	
	The random matrix $S(X_m)$ may be viewed as the $m\times m$ principal submatrix of the infinite random matrix
	$\left(-\frac{1}{2}d^2(\xi_i,\xi_j)\right)_{i,j=1}^\infty$. The distribution law of the latter is clearly 
	the measure $\nu\eqset f_{\#}\mu^\infty$ (which may be called \textit{matrix distribution} following~\cite{Vershik-learningMMS-UMN98,Vershik-randomMMS04a}) over the space of infinite matrices $\R^{\N\times \N}$, where $\mu^\infty$ is the countable tensor product of $\mu$,
	$f\colon X^\N\to \R^{\N\times \N}$ is defined by
	\[
	f((x_1, x_2, \ldots, x_k,\ldots)) \eqset \left(-\frac{1}{2}d^2(x_i,x_j)\right)_{i,j=1}^\infty,
	\]
	and $f_{\#}$ stands for the usual pushforward of a measure by the map $f$.
	
	We now reformulate the Gromov--Vershik theorem (``\textit{mm-reconstruction theorem}'', see~\cite{Vershik-learningMMS-UMN98} or alternatively~\cite{Vershik-randomMMS04a})
	which in its original version uses the random distance matrix $(d(\xi_i,\xi_j))_{i,j=1}^\infty$ instead of $\left(-\frac{1}{2}d^2(\xi_i,\xi_j)\right)_{i,j=1}^\infty$.
	
	\begin{theorem}
		If $(X,d)$ is Polish, then the measure $\nu$ determines $(X,d,\mu)$ uniquely up to a measure-preserving isometry.
	\end{theorem}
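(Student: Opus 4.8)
The plan is to invoke the strong law of large numbers for empirical measures and to reconstruct $(X,d,\mu)$, up to measure-preserving isometry, from one $\mu^\infty$-typical realization of the sequence $(\xi_i)_{i=1}^\infty$.

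First I would reduce to the original distance-matrix formulation: since the entrywise map $a\mapsto\sqrt{-2a}$ is a homeomorphism between the relevant subsets of $\R^{\N\times\N}$ that intertwines $f$ with the distance-matrix map $g\colon(x_i)_{i=1}^\infty\mapsto(d(x_i,x_j))_{i,j=1}^\infty$, the measure $\nu$ and the law $\nu_d\eqset g_\#\mu^\infty$ of the random distance matrix determine each other, so it suffices to show that $\nu_d$ determines $(X,d,\mu)$. Next, since $\mu(\supp\mu)=1$ on a separable space, we have $\xi_i\in\supp\mu$ almost surely; replacing $X$ by the completion of $\supp\mu$ and extending $\mu$ by zero (the added points form a $\mu$-null set, so this changes neither $\nu_d$ nor the measure-preserving isometry class) I may and do assume that $(X,d)$ is complete and $X=\supp\mu$.

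The key probabilistic input is Varadarajan's theorem: for an i.i.d.\ sample in a separable metric space the empirical measures converge weakly to $\mu$ almost surely. Let $N\subset X^\N$ be the set of sequences $(x_i)_{i=1}^\infty$ such that $\tfrac1n\sum_{i=1}^n\delta_{x_i}\to\mu$ weakly; this is a Borel set (intersect, over a countable convergence-determining family of bounded continuous functions, the Borel sets where the corresponding averages converge to the right value) with $\mu^\infty(N)=1$, and for every $(x_i)\in N$ the set $\{x_i\}_{i\in\N}$ is dense in $\supp\mu=X$ (cover $X$ by countably many open balls of positive $\mu$-measure). I then reconstruct as follows, using only the matrix $D\eqset g((x_i))$ for a fixed $(x_i)\in N$: equip $\N$ with the pseudometric $\rho(i,j)\eqset D_{ij}$, pass to the associated metric quotient and take its completion $(\hat X,\hat\rho)$; by density and completeness the map $i\mapsto x_i$ extends to an onto isometry $\iota\colon\hat X\to X$, and since weak convergence is preserved by pushforward under the homeomorphism $\iota^{-1}$, the weak limit $\hat\mu\eqset\lim_n\tfrac1n\sum_{i=1}^n\delta_{[i]}$ exists in $\hat X$ with $\iota_\#\hat\mu=\mu$. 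Thus $(\hat X,\hat\rho,\hat\mu)$, built from $D$ alone, is a measure-preserving isometric copy of $(X,d,\mu)$.

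Finally I would conclude by comparison. The set $g(N)\subset\R^{\N\times\N}$ is the continuous image of a Borel set, hence analytic and so universally measurable, and because every Borel set disjoint from $g(N)$ has $\mu^\infty$-negligible $g$-preimage, one gets $\nu_d(g(N))=1$. If $(X',d',\mu')$ is any Polish space (again, without loss of generality, complete with full support) whose random distance matrix has the same law $\nu_d$, then the corresponding set $g'(N')$ likewise satisfies $\nu_d(g'(N'))=1$, so $g(N)\cap g'(N')\neq\emptyset$; choosing $D$ in this intersection, write $D=g((x_i))=g'((x_i'))$ with $(x_i)\in N$ and $(x_i')\in N'$, and apply the reconstruction of the previous paragraph to $D$ twice to obtain $(X,d,\mu)\cong(\hat X,\hat\rho,\hat\mu)\cong(X',d',\mu')$. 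The main obstacle is the reconstruction step: one must argue that the abstract completion of the sampled points is genuinely isometric to all of $X$ (which rests on the almost sure density of the sample in $\supp\mu$) and, more importantly, that $\mu$ itself — not merely its support — is recovered on this abstract copy, the quantitative ingredient for which is precisely the almost sure weak convergence of the empirical measures together with its invariance under the reconstructed isometry; the discrepancy between the sequence $X_m$ with repetitions and the set $\hat X_N$ of its distinct elements is immaterial here since it washes out in the limit $n\to\infty$.
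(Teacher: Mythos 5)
The paper gives no proof of this theorem; it is stated as a reformulation of the Gromov--Vershik mm-reconstruction theorem with a citation to Vershik's papers, together with the remark that replacing $d$ by $-\tfrac12 d^2$ is harmless. Your argument is, in substance, a correct reconstruction of Vershik's original proof: reduce to the distance-matrix law via the entrywise homeomorphism, invoke Varadarajan's almost sure weak convergence of empirical measures (which simultaneously gives almost sure density of the sample in $\supp\mu$ and recovery of $\mu$ as an intrinsic weak limit on the abstract completion), and compare two spaces by noting that both ``good'' sets of matrices are universally measurable and conull for $\nu_d$, hence meet, so a common matrix reconstructs a measure-preserving isometry between them. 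The only point worth flagging is cosmetic: as stated, the conclusion is about $(X,d,\mu)$ and not a priori about $(\overline{\supp\mu},d,\mu)$, so your reduction to full support implicitly uses the standard convention that mm-spaces are identified with the closed support of their measure (i.e.\ that points outside $\supp\mu$ are irrelevant to the measure-preserving isometry class). This is the convention the cited sources also use, so the reduction is legitimate, but you might state it explicitly rather than claiming that extending by zero ``changes neither $\nu_d$ nor the measure-preserving isometry class,'' since the latter is exactly the convention being invoked rather than a consequence.
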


	\section{Limit distance signature of separable metric spaces}
	
	We introduce the following notion crucial for this paper.
	
	\begin{definition}\label{def_limsig1}
		Let $(X,d)$ be a metric space.
		We call the limit distance signature of $(X,d)$ the 
		couple $(s_-(X,d), s_+(X,d))$ with $s_\pm(X,d))\in \N\cup\{+\infty\}$ defined by
		\begin{equation}\label{eq_spmSup1}
			s_\pm (X,d) =\sup\{s_\pm (S(X_N)) : X_N\subset X\, \mbox{finite} \}.
		\end{equation}
	\end{definition}
	
	In this section we prove the following result.
	
	\begin{theorem}\label{th_spm1} Let $(X,d)$ be a separable metric space. Then 
		the following assertions hold.
		\begin{itemize}
			\item[(i)] If $X$ is at most countable, then 
			\[
			s_\pm(X,d)=\lim_N s_\pm (S(X_N))=\sup_N s_\pm (S(X_N))
			\] 
			for an arbitrary sequence of finite subsets $X_N\nearrow X$ as $N\to \infty$. In particular, if $X$ is finite, then $s_\pm(X,d)=s_\pm(S(X))$.
			\item[(ii)] If $X$ is infinite, then $s_\pm(X,d)=s_\pm(\tilde X,d)$, where $\tilde{X}\subset X$ is an arbitrary countable dense subset of $X$ endowed with the same distance $d$.
			\item[(iii)] If $\mu$ is a Borel probability measure over $X$ with full support, i.e.,\ with $\supp\mu = X$, then
			\[
			s_\pm(X,d)=\lim_m s_\pm (S(X_m(\omega)))=\lim_m s_\pm (S(\hat X_N(\omega))) \quad\mbox{a.s.},
			\]
			where $X_m$ are random finite sequences of i.i.d.\ points in $X$ chosen according to the Gromov--Vershik scheme and $ \hat X_N$ are the respective random finite sets of points in $X$ obtained by cancelling repeated elements in the sequence $X_m$, i.e.,\ according to the Gromov--Vershik scheme without repetitions.
			\item[(iv)] If $\mu$ has the finite $4$-th moment and full support in $X$, then 
			\[
			s_\pm (K_\mu)=s_\pm (X,d),
			\]
			where $K_\mu$ is the operator defined by~\eqref{eqn:MDS opK}, although the numbers $s_\pm(X,d)$ do not depend on $\mu$.
			Furthermore,
			\[
			s_\pm(K_\mu) - 1 \le s_\pm(T_\mu) \le s_\pm(K_\mu),
			\]
			and if $\fone \not\perp \ker K_\mu$, then $s_\pm(T_\mu) = s_\pm(K_\mu)$, where $T_\mu$ is the operator defined by~\eqref{eqn:MDS opT}.
			\item[(v)] $s_\pm (T_\mu)$ also does not depend on $\mu$ once the latter
			has finite $4$-th moment and full support in $X$.
		\end{itemize}
	\end{theorem}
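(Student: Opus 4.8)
The plan is to prove the five assertions essentially in the order stated, leveraging earlier parts for later ones and relying on the Cauchy interlacing theorem as the fundamental tool.

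For (i), the key observation is monotonicity: if $X_N \subset X_{N'}$ are finite subsets, then $S(X_N)$ is a principal submatrix of $S(X_{N'})$, so by the Cauchy interlacing theorem $s_\pm(S(X_N)) \le s_\pm(S(X_{N'}))$. Hence along any increasing exhaustion $X_N \nearrow X$ of a countable $X$, the sequences $s_\pm(S(X_N))$ are nondecreasing in $N$, so their limits equal their suprema. That these suprema coincide with $s_\pm(X,d)$ as defined by~\eqref{eq_spmSup1} follows because every finite $X_N \subset X$ is contained in some member of the exhaustion. The finite case is immediate.

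For (ii), one inclusion is trivial: $s_\pm(\tilde X, d) \le s_\pm(X,d)$ since $\tilde X \subset X$. For the reverse, given any finite $F \subset X$ realizing (or approaching) $s_\pm(X,d)$, I would use density of $\tilde X$ together with continuity of eigenvalues under perturbation of the matrix entries: pick $\tilde F \subset \tilde X$ with points close to those of $F$, so that $S(\tilde F)$ is close in operator norm to $S(F)$; then since $s_\pm$ is lower semicontinuous (a matrix with $k$ eigenvalues strictly above $0$ retains this under small perturbations — here one must be slightly careful about zero eigenvalues of $S(F)$, but one only needs $s_\pm(S(\tilde F)) \ge s_\pm(S(F))$, which follows from the strict-eigenvalue count being preserved), one gets $s_\pm(\tilde X, d) \ge s_\pm(S(F))$; taking suprema over $F$ gives the claim.

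For (iii), I would combine (i)/(ii) with the Borel--Cantelli-type fact that, when $\supp\mu = X$, an i.i.d.\ sample $(\xi_i)_i$ almost surely has $\{\xi_i : i \in \N\}$ dense in $X$ (its closure is a.s.\ the support of $\mu$). Fix such an $\omega$. Then $\hat X_N(\omega) \nearrow \{\xi_i(\omega)\}$, a countable dense subset, so by (i) applied to this countable set and then (ii), $\lim_N s_\pm(S(\hat X_N(\omega))) = s_\pm(\{\xi_i(\omega)\}, d) = s_\pm(X,d)$. For the sequence version, note $S(X_m)$ differs from $S(\hat X_N)$ only by duplicated rows/columns, which do not change the count of nonzero eigenvalues of either sign (a repeated point adds a zero eigenvalue); more carefully, $S(X_m)$ and $S(\hat X_{N(m)})$ have the same $s_\pm$ because the former is obtained from the latter by a congruence-like duplication — I would verify $s_\pm(S(X_m)) = s_\pm(S(\hat X_{N(m,\omega)}(\omega)))$ directly, and since $N(m,\omega) \to \infty$ a.s.\ (as $X$ is infinite and $\supp\mu = X$), the limit in $m$ agrees with the limit in $N$.

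For (iv), the operator $K_\mu$ is Hilbert--Schmidt and self-adjoint when $\mu$ has finite $4$-th moment, hence compact with discrete real spectrum. The bridge to the finite samples is the result of Koltchinskii--Giné~\cite{KoltchinskiiGine-RandomIntOp2000}: the spectrum of $\frac{1}{m} S(X_m)$ (the empirical version) converges a.s.\ to that of $K_\mu$ in a strong enough sense (e.g.\ the ordered nonzero eigenvalues converge). Since scaling by $1/m > 0$ does not change signs of eigenvalues, $s_\pm(S(X_m)) = s_\pm\bigl(\tfrac1m S(X_m)\bigr)$. The convergence of eigenvalues gives $\liminf_m s_\pm(S(X_m)) \ge s_\pm(K_\mu)$ automatically; for the reverse inequality I would argue that if infinitely many strictly positive (resp.\ negative) empirical eigenvalues persisted beyond what $K_\mu$ has, they would have to accumulate away from $0$, contradicting Hilbert--Schmidt-ness in the limit — so $\lim_m s_\pm(S(X_m)) = s_\pm(K_\mu)$. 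Combined with (iii), $s_\pm(K_\mu) = s_\pm(X,d)$, which is $\mu$-independent. The comparison $s_\pm(K_\mu) - 1 \le s_\pm(T_\mu) \le s_\pm(K_\mu)$ comes from $T_\mu = P_\mu K_\mu P_\mu$ with $P_\mu$ the orthogonal projection onto the codimension-one subspace $\fone^\perp$: the upper bound is Cauchy interlacing (or the min–max principle) for a rank-one compression, and the lower bound likewise, since passing to a codimension-one subspace can destroy at most one positive and one negative eigenvalue. The refinement when $\fone \not\perp \ker K_\mu$ is the observation that then one may choose a vector in $\ker K_\mu$ to "absorb" the lost dimension, so no positive or negative eigenvalue is actually lost; I'd make this precise by a direct min–max argument exhibiting the required test subspaces inside $\fone^\perp$.

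Assertion (v) is then immediate: by (iv), $s_\pm(T_\mu)$ is sandwiched between $s_\pm(K_\mu) - 1$ and $s_\pm(K_\mu)$, both $\mu$-independent by (iv); when these two values coincide (which happens unless $s_\pm(K_\mu)$ is finite and $\fone \perp \ker K_\mu$) there is nothing to prove, and in the remaining finite case one checks the $\fone \perp \ker K_\mu$ condition is itself forced by the geometry rather than the choice of full-support $\mu$ — more cleanly, one shows directly that $s_\pm(T_\mu)$ equals a quantity defined purely in terms of $(X,d)$ (e.g.\ via the matrices $\Pi_N S(X_N) \Pi_N$ along an exhaustion, which are the finite analogues of $T_\mu$ and whose $s_\pm$ stabilize by interlacing just as in (i)), so $\mu$ drops out.

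The main obstacle I anticipate is the careful handling of \emph{zero} eigenvalues throughout — in the perturbation argument of (ii), in relating $S(X_m)$ to $S(\hat X_N)$ in (iii), and most delicately in (iv)–(v), where the difference between "$s_\pm(T_\mu) = s_\pm(K_\mu)$" and "$s_\pm(T_\mu) = s_\pm(K_\mu) - 1$" hinges entirely on whether a kernel vector of $K_\mu$ is available to compensate for the compression by $P_\mu$. Making the $\fone \not\perp \ker K_\mu$ case rigorous, and confirming that in the complementary case the off-by-one is genuinely $\mu$-independent, will require the most care.
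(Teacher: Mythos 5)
Your outline for parts (i)--(iii) matches the paper's approach: Cauchy interlacing gives monotonicity and the countable case, a density-plus-perturbation argument reduces the separable case to the countable one, and the a.s.\ density of the i.i.d.\ sample together with the fact that duplicating rows/columns does not change $s_\pm$ handles (iii). For (v) you also eventually land on the paper's actual strategy, namely characterizing $s_\pm(T_\mu)$ via the matrices $\Pi_N S(X_N)\Pi_N$ over finite subsets so that $\mu$ drops out; your instinct that ``the $\mu$-independence of the sandwich alone does not suffice'' is correct.

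However, there is a genuine gap in (iv). You want $\lim_m s_\pm(S(X_m)) = s_\pm(K_\mu)$ from the Koltchinskii--Gin\'e $\ell^2$-convergence of the sorted spectra of $\tfrac1m S(X_m)$ to $\sigma(K_\mu)$. That convergence only yields the direction $\liminf_m s_\pm(S(X_m)) \ge s_\pm(K_\mu)$ (any fixed number of strictly signed eigenvalues of $K_\mu$ is eventually reproduced). Your argument for the reverse inequality --- that ``extra'' persistent positive eigenvalues would have to accumulate away from $0$, contradicting compactness --- does not hold: $\ell^2$-convergence of sorted eigenvalue sequences is perfectly compatible with each $S(X_m)$ having, say, $m-1$ positive eigenvalues, of which all but $s_+(K_\mu)$ tend to zero. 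Nothing in the spectral convergence forces the extra positive eigenvalues to stay away from the origin; they simply vanish in the $\ell^2$ metric. So the key direction $s_\pm(K_\mu) \ge s_\pm(X,d)$ --- equivalently, $s_\pm(K_\mu) \ge s_\pm(S(X_N))$ for every finite $X_N \subset \supp\mu$ --- cannot be extracted from the random-matrix convergence and must be proved deterministically. The paper does this (Lemma~\ref{lm_approxLinOp1}(i)) by a compression argument: one approximates the span of indicator functions of small balls around the points of $X_N$, shows that the compression of $K_\mu$ to this subspace is norm-close to the matrix $S(X_N)$ (after the obvious identifications), and then applies the operator-theoretic comparison in Lemma~\ref{lm spm_control1}. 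This deterministic lemma is the heart of (iv) and is entirely missing from your sketch; without it the equality $s_\pm(K_\mu)=s_\pm(X,d)$ is not established.
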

	
	\begin{remark}
		Note that $s_\pm(X,d)\geq 1$ unless $X$ is a singleton (in which case clearly $s_\pm(X,d)= 0$ by~(i)).
		In fact, for every finite $X_N\subset X$
		one has $s_-(S(X_N))\geq 1$ since the Perron--Frobenius theorem guarantees the existence of a strictly negative eigenvalue
		of maximum absolute value among the other eigenvalues. Moreover, the trace of the above matrices is zero, so we have
		$s_+(S(X_N))\geq 1$.
	\end{remark}
	
	\begin{proof}
		Claim~(i) is Proposition~\ref{prop:sign convergence1}.
		Claim~(ii) is in fact the definition of limit distance signature, the correctness of which is Proposition~\ref{prop_count_to_sep1}. 
		Claim~(iii) is Proposition~\ref{prop:sign convergence3}.
		Claim~(iv) follows from Proposition~\ref{prop:sign convergence2_sep} and Proposition~\ref{prop: countable space sign}. 
		Finally, Claim~(v) is part of Proposition\ref{prop_sigTmu1}.
	\end{proof}
	
	The subsections below are dedicated to the detailed proof of the above Theorem~\ref{th_spm1}.
	
	\subsection{Deterministic characterization of limit distance signatures}
	\label{sec:determ op countable}
	
	The following purely deterministic statement is valid.
	
	\begin{proposition}
		\label{prop:sign convergence1}
		Let $(X,d)$ be an at most countable metric space. 
		Then for all sequences of finite subsets $X_N\subset X$ such that $X_N\nearrow X$ as $N\to \infty$,
		the limits always satisfy
		\[
		s_\pm (X,d)=\lim_N s_\pm (S(X_N)) .
		\]
		In particular, the above limits are independent of the sequence $(X_N)_N$. 
	\end{proposition}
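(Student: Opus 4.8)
The plan is to exploit monotonicity of the numbers $s_\pm(S(X_N))$ along any increasing exhaustion $X_N \nearrow X$ and then match the limit to the supremum in Definition~\ref{def_limsig1}. First I would observe that for nested finite subsets $X_N \subset X_{N+1}$, the matrix $S(X_N)$ is a principal submatrix of $S(X_{N+1})$ (after reordering rows/columns, which does not affect the spectrum). By the Cauchy interlacing theorem in the form stated above, adding one row and column can only preserve or increase the number of eigenvalues $\geq \delta$ (and $\leq \delta$) for any $\delta$; applying this with $\delta$ a small positive (resp.\ negative) number and using that $0$ is handled by taking $\delta \to 0^\pm$, we get that $s_\pm(S(X_N))$ is nondecreasing in $N$. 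Hence $\lim_N s_\pm(S(X_N)) = \sup_N s_\pm(S(X_N))$ exists in $\N \cup \{+\infty\}$.

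Next I would show this limit equals $s_\pm(X,d) = \sup\{s_\pm(S(X_N)) : X_N \subset X \text{ finite}\}$. The inequality $\lim_N s_\pm(S(X_N)) \leq s_\pm(X,d)$ is immediate since every term in the sequence is one of the quantities over which the supremum on the right is taken. For the reverse inequality, let $Y \subset X$ be an arbitrary finite subset; since $X_N \nearrow X$ and $Y$ is finite, there exists $N_0$ with $Y \subset X_{N_0}$, so $S(Y)$ is a principal submatrix of $S(X_{N_0})$ and again by Cauchy interlacing $s_\pm(S(Y)) \leq s_\pm(S(X_{N_0})) \leq \lim_N s_\pm(S(X_N))$. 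Taking the supremum over all finite $Y$ gives $s_\pm(X,d) \leq \lim_N s_\pm(S(X_N))$. Combining the two inequalities yields equality, and since the argument did not use any particular exhaustion, the limit is the same for every sequence $X_N \nearrow X$, proving independence.

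The only genuinely delicate point is the handling of the zero eigenvalue when passing from the ``$\geq \delta$'' form of interlacing to statements about strictly positive (resp.\ strictly negative) eigenvalue counts: one must be a little careful that $s_+(B') \leq s_+(B)$ really follows, i.e.\ that enlarging the matrix cannot destroy a strictly positive eigenvalue. This is where I would invoke the ``in particular'' clause of the Cauchy interlacing proposition with $\delta$ chosen strictly between $0$ and the smallest positive eigenvalue of the relevant submatrix (and symmetrically for $s_-$), so that ``$\geq \delta$'' counts exactly the strictly positive eigenvalues of $B'$; the same $\delta$ then witnesses that $B$ has at least that many eigenvalues $\geq \delta > 0$. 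Everything else is routine: interlacing is quoted, and the supremum/limit interchange is elementary once monotonicity is in place.
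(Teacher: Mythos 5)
Your proposal is correct and takes essentially the same route as the paper: monotonicity of $s_\pm(S(X_N))$ via Cauchy interlacing, identifying the limit with the supremum, and then the two-sided comparison with $s_\pm(X,d)$ using that any finite $X'\subset X$ is eventually contained in some $X_M$. Your additional remark about choosing $\delta$ strictly between $0$ and the smallest positive (resp.\ largest negative) eigenvalue of the submatrix is a correct unpacking of the ``in particular'' clause of the Cauchy interlacing proposition that the paper uses implicitly.
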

	
	\begin{proof}
		By the Cauchy interlacing theorem, the sequences $\bigl(s_\pm (S(X_N))\bigr)_N$ are non-decreasing
		and hence admit a finite or infinite limit. Moreover, by the definition of $s_\pm(X,d)$, we have
		$s_\pm (X,d)\geq s_\pm (S(X_N))$ for every $N$, and hence
		\[
		s_\pm (X,d)\geq \lim_N s_\pm (S(X_N)) . 
		\] 
		To show the other side of the inequality, note for any finite subset $X'\subset X$, there is an $M\in \N$ such that $X'\subset X_M$. Thus, by 
		the Cauchy interlacing theorem, $s_\pm (S(X'))\leq s_\pm (S(X_M))$. Since $X'$ is arbitrary, we also have the reverse inequality
		\[
		s_\pm (X,d)\leq s_\pm (S(X_M)) \leq \sup_N s_\pm (S(X_N)) =\lim_N s_\pm (S(X_N)) ,
		\]
		concluding the proof. 
	\end{proof}
	
	We now consider the limit distance signature of a general separable metric spaces.
	
	\begin{proposition}\label{prop_dsign_upper1}
		Let $(X,d_X)$ be isometrically embeddable in $(Y,d_Y)$, both metric spaces being separable. 
		Then
		\[
		s_\pm(X,d_X)\leq s_\pm(Y,d_Y).
		\]
	\end{proposition}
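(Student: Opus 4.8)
The plan is to argue directly from the definition~\eqref{eq_spmSup1} of the limit distance signature as a supremum over finite subconfigurations, using the trivial fact that an isometric embedding preserves all pairwise squared distances.

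First I would fix an isometric embedding $\iota\colon X\to Y$, i.e.\ a map satisfying $d_Y(\iota(x),\iota(x'))=d_X(x,x')$ for all $x,x'\in X$. Since $d_X(x,x')=0$ forces $x=x'$, such an $\iota$ is automatically injective. Hence, given any finite subset $X_N=\{x_1,\dots,x_N\}\subset X$, its image $\iota(X_N)=\{\iota(x_1),\dots,\iota(x_N)\}$ is a finite subset of $Y$ of the same cardinality $N$, and the two matrices coincide (with respect to the matching enumerations):
\[
S(X_N)=\left(-\tfrac12 d_X^2(x_i,x_j)\right)_{i,j=1}^N=\left(-\tfrac12 d_Y^2(\iota(x_i),\iota(x_j))\right)_{i,j=1}^N=S(\iota(X_N)).
\]
In particular $s_\pm(S(X_N))=s_\pm(S(\iota(X_N)))$.

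Next I would pass to the supremum. By the displayed identity, for every finite $X_N\subset X$ we have $s_\pm(S(X_N))=s_\pm(S(\iota(X_N)))\le s_\pm(Y,d_Y)$, the last inequality being the definition~\eqref{eq_spmSup1} applied to the finite subset $\iota(X_N)\subset Y$. Taking the supremum over all finite $X_N\subset X$ gives $s_\pm(X,d_X)\le s_\pm(Y,d_Y)$, as claimed.

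There is no real obstacle here: the estimate is an immediate structural consequence of the fact that $s_\pm$ is built from finite subconfigurations and that isometric embeddings preserve all pairwise distances. The only point deserving a word is the injectivity of $\iota$, which ensures that no two points of $X_N$ are collapsed, so that $\iota(X_N)$ is genuinely an $N$-point configuration in $Y$ realizing the matrix $S(X_N)$; separability of $X$ and $Y$ plays no role in this particular inequality. (When $X$ is at most countable one could alternatively invoke Proposition~\ref{prop:sign convergence1} with an exhausting sequence $X_N\nearrow X$ and its $\iota$-image in $Y$, but the direct argument above dispenses with any countability assumption.)
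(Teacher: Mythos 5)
Your argument is correct and is essentially the paper's proof spelled out in full detail: the paper simply identifies $X$ with its image in $Y$ via the isometric embedding and notes that the inequality is immediate from the definition of $s_\pm$ as a supremum over finite subsets. You make explicit the two small points the paper leaves implicit (injectivity of the embedding and the equality of the matrices $S(X_N)=S(\iota(X_N))$), which is fine but not a different route.
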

	
	\begin{proof}
		By embedding $X$ isometrically into $Y$ we may assume without loss of generality that $X$ is a subset of $Y$. The assertion is then straightforward from the definition of $s_\pm$.
	\end{proof}
	
	\begin{proposition}\label{prop_count_to_sep1} 
		Let $(X,d)$ be a separable metric space. Then 
		\[
		s_\pm(X,d)=s_\pm(\tilde X,d),
		\]
		where $\tilde X\subset X$ is an arbitrary countable 
		dense subset of $X$.
	\end{proposition}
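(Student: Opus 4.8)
The plan is to prove the two inequalities separately. Since $\tilde X\subset X$, every finite subset of $\tilde X$ is in particular a finite subset of $X$, so taking the supremum in \eqref{eq_spmSup1} over the smaller family of finite sets immediately gives $s_\pm(\tilde X,d)\le s_\pm(X,d)$ (this is also a special case of Proposition~\ref{prop_dsign_upper1}). The substance of the statement is the reverse inequality $s_\pm(X,d)\le s_\pm(\tilde X,d)$, which I would obtain by a perturbation argument: an arbitrary finite configuration in $X$ can be approximated, entrywise in its squared distance matrix, by a finite configuration in $\tilde X$, and the number of eigenvalues above (resp.\ below) a fixed positive (resp.\ negative) threshold cannot drop under a sufficiently small perturbation.

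In detail, I would fix an arbitrary finite subset $X_N=\{x_1,\dots,x_N\}\subset X$ of distinct points, set $k_\pm\eqset s_\pm(S(X_N))$, and let $\lambda>0$ be the smallest modulus of a nonzero eigenvalue of the real symmetric matrix $S(X_N)$. Using density of $\tilde X$, for a parameter $\varepsilon>0$ to be chosen, pick $\tilde x_i\in\tilde X$ with $d(x_i,\tilde x_i)<\varepsilon$; imposing $\varepsilon<\tfrac12\min_{i\ne j}d(x_i,x_j)$ forces the $\tilde x_i$ to be pairwise distinct, so that $\tilde X_N\eqset\{\tilde x_i\}_{i=1}^N$ is a genuine finite subset of $\tilde X$. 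By the triangle inequality $|d(\tilde x_i,\tilde x_j)-d(x_i,x_j)|\le 2\varepsilon$, and since all distances involved are bounded by $\diam X_N+2\varepsilon$, each entry of $S(\tilde X_N)-S(X_N)$ is $O(\varepsilon)$; hence $\|S(\tilde X_N)-S(X_N)\|\to 0$ as $\varepsilon\to 0$ in any fixed matrix norm.

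Next I would invoke Weyl's perturbation inequality (equivalently, the min–max characterization of eigenvalues, which also underlies the Cauchy interlacing theorem quoted above): if $\|S(\tilde X_N)-S(X_N)\|<\lambda/2$, then the $i$-th eigenvalue of $S(\tilde X_N)$ (in increasing order) differs from the $i$-th eigenvalue of $S(X_N)$ by at most $\lambda/2$, so $S(\tilde X_N)$ has at least $k_+$ eigenvalues $>\lambda/2>0$ and at least $k_-$ eigenvalues $<-\lambda/2<0$. Choosing $\varepsilon$ small enough to ensure both $\varepsilon<\tfrac12\min_{i\ne j}d(x_i,x_j)$ and $\|S(\tilde X_N)-S(X_N)\|<\lambda/2$, we conclude $s_\pm(S(\tilde X_N))\ge k_\pm=s_\pm(S(X_N))$, and therefore $s_\pm(\tilde X,d)\ge s_\pm(S(X_N))$. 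Since the finite set $X_N\subset X$ was arbitrary, passing to the supremum over all such sets yields $s_\pm(\tilde X,d)\ge s_\pm(X,d)$, which together with the first inequality completes the proof.

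The argument has no real obstacle, only two bookkeeping points that must be handled with care: first, guaranteeing that the approximating points $\tilde x_i$ may be taken distinct, so that $S(\tilde X_N)$ is an admissible matrix in Definition~\ref{def_limsig1} — this is secured by shrinking $\varepsilon$ below half the minimal pairwise distance of $X_N$; second, the lower semicontinuity of the count of eigenvalues above/below a threshold under the norm perturbation — this is exactly what Weyl's inequality provides once the threshold is taken to be $\lambda/2$. Separability of $X$ enters only to ensure that a countable dense subset $\tilde X$ exists at all; no compactness or boundedness is used.
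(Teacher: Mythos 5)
Your proof is correct and follows essentially the same approach as the paper: approximate an arbitrary finite subset of $X$ by a nearby finite subset of $\tilde X$ using density, and observe that a small entrywise perturbation of a symmetric matrix cannot decrease the number of eigenvalues above (resp.\ below) a fixed positive (resp.\ negative) threshold. The paper simply asserts the perturbation step with the words ``close enough,'' whereas you make it explicit via Weyl's inequality (which, incidentally, is the same mechanism the paper relies on in Lemma~\ref{lem:sign_comparison}(ii)); your additional remark on keeping the approximating points pairwise distinct is a small but legitimate bookkeeping point.
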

	
	\begin{proof}
		Consider an arbitrary finite set $X_N\subset X$. Since $\tilde X$ is dense in $X$, we can find a finite $\tilde X_N\subset \tilde X$
		close enough to $X_N$ so that $s_\pm (S(X_N))\leq s_\pm (S(\tilde X_N))$. This implies 
		$s_\pm(X,d)\leq s_\pm(\tilde X,d)$ and the reverse inequality follows immediately from the definition of $s_\pm (X,d)$
		(alternatively, from Proposition~\ref{prop_dsign_upper1}) concluding the proof.	
	\end{proof}

	\subsection{Random characterization of limit distance signature}
	\label{sec:rnd op countable}
	
	Consider now 
	\begin{itemize}
		\item the random finite sequences $X_m$ and the respective random matrices $S(X_m)$ obtained by the Gromov--Vershik scheme,
		\item as well as the random finite sets $\hat X_N\subset X$ with $N\le m$ random which is obtained from $X_m$ by cancelling repeating elements,
		and respective random matrices $S(\hat X_N)$ (in other words,
		the random set $\hat X_N$ and the matrix $S(\hat X_N)$ obtained by Gromov--Vershik scheme without repetitions).
	\end{itemize}
	
	The following assertion is valid.
	
	\begin{proposition}
		\label{prop:sign convergence3}
		Let $(X,d)$ be a separable metric space.
		If $\mu$ has full support, one has
		\begin{align*}
			s_\pm(X,d)= \lim_{m} s_\pm(S(\hat X_{N(m,\omega)}(\omega))) = \lim_{m} s_\pm(S(X_m(\omega))) 
		\end{align*}
		for $\P$-a.e.\ $\omega\in\Omega$.
	\end{proposition}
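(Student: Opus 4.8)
The plan is to reduce the claim to its ``without repetitions'' form, then record a monotonicity observation that yields existence of the limits together with the easy inequality, and finally obtain the reverse inequality from the full support of $\mu$ and the continuity of eigenvalues. \emph{First}, I would eliminate the distinction between $S(X_m)$ and $S(\hat X_N)$: fixing $\omega$, write $\pi\colon\{1,\dots,m\}\to\{1,\dots,N\}$ for the surjection with $\xi_i(\omega)=\hat\xi_{\pi(i)}(\omega)$ and let $E\in\R^{N\times m}$ be the $0$--$1$ matrix with $E_{li}=\fone_{\{l=\pi(i)\}}$. A direct computation gives $S(X_m(\omega))=E^{T}S(\hat X_{N}(\omega))E$; since $E$ has full row rank there is an invertible $G\in\R^{m\times m}$ with $EG=[\,\Id_N\mid 0\,]$, so $G^{T}S(X_m(\omega))G=S(\hat X_{N}(\omega))\oplus 0_{m-N}$ and, by Sylvester's law of inertia, $s_\pm(S(X_m(\omega)))=s_\pm(S(\hat X_{N}(\omega)))$ for every $\omega$. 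Hence it suffices to prove $s_\pm(X,d)=\lim_m s_\pm(S(\hat X_{N(m,\omega)}(\omega)))$ almost surely.

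\emph{Next}, since the sets $\hat X_{N(m,\omega)}(\omega)$ increase with $m$, each $S(\hat X_{N(m,\omega)}(\omega))$ is a principal submatrix of $S(\hat X_{N(m+1,\omega)}(\omega))$, so by the Cauchy interlacing theorem the sequence $m\mapsto s_\pm(S(\hat X_{N(m,\omega)}(\omega)))$ is non-decreasing and the limit exists in $\N\cup\{+\infty\}$ for every $\omega$. Moreover $\hat X_{N(m,\omega)}(\omega)$ is a finite subset of $X$, so Definition~\ref{def_limsig1} yields $s_\pm(S(\hat X_{N(m,\omega)}(\omega)))\le s_\pm(X,d)$ for every $m$ and $\omega$, and therefore $\lim_m s_\pm(S(\hat X_{N(m,\omega)}(\omega)))\le s_\pm(X,d)$ holds deterministically.

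\emph{For the reverse inequality}, fix a sign and an integer $n\le s_\pm(X,d)$, and choose by Definition~\ref{def_limsig1} a finite set $Y=\{y_1,\dots,y_k\}\subset X$ with $s_\pm(S(Y))\ge n$. The map $(z_1,\dots,z_k)\mapsto\bigl(-\tfrac12 d^2(z_i,z_j)\bigr)_{i,j=1}^{k}$ is continuous, and the number of positive (resp.\ negative) eigenvalues of a symmetric matrix is a lower semicontinuous function of its entries; hence there is $\varepsilon\in\bigl(0,\tfrac12\min_{i\ne j}d(y_i,y_j)\bigr)$ such that $s_\pm\bigl(S(\{z_1,\dots,z_k\})\bigr)\ge n$ whenever $z_j\in B(y_j,\varepsilon)$ for every $j$. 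Since $\supp\mu=X$, each ball $B(y_j,\varepsilon)$ has positive $\mu$-measure, so by Borel--Cantelli the i.i.d.\ sequence $(\xi_i)$ almost surely visits every $B(y_j,\varepsilon)$ infinitely often; on that event there is $m_0(\omega)$ such that for all $m\ge m_0(\omega)$ the set $\hat X_{N(m,\omega)}(\omega)$ contains some $z_j^{(m)}\in B(y_j,\varepsilon)$ for each $j$. The choice of $\varepsilon$ forces $z_1^{(m)},\dots,z_k^{(m)}$ to be pairwise distinct, so $S(\{z_1^{(m)},\dots,z_k^{(m)}\})$ is a principal submatrix of $S(\hat X_{N(m,\omega)}(\omega))$, and the Cauchy interlacing theorem gives $s_\pm(S(\hat X_{N(m,\omega)}(\omega)))\ge n$ for all $m\ge m_0(\omega)$. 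Intersecting the countably many full-measure events obtained as the sign runs over $\{+,-\}$ and $n$ runs over $\{1,\dots,s_\pm(X,d)\}$ (or over all of $\N$ when $s_\pm(X,d)=+\infty$) we conclude $\lim_m s_\pm(S(\hat X_{N(m,\omega)}(\omega)))\ge s_\pm(X,d)$ almost surely, which together with the previous paragraph completes the proof.

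The genuinely delicate points are the lower semicontinuity of the eigenvalue counts — a routine eigenvalue-perturbation argument — and the need to keep the sampled points $z_j^{(m)}$ pairwise distinct so that the Cauchy interlacing theorem applies; both are dealt with by choosing $\varepsilon$ small enough. I expect the main difficulty to be purely organizational, namely keeping careful track of the difference between the random \emph{sequence} $X_m$ and the random \emph{set} $\hat X_N$, which is exactly what the first step isolates.
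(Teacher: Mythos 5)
Your proof is correct, but it takes a somewhat different route from the paper's. For the reduction from the sequence $X_m$ to the set $\hat X_N$, the paper proves Lemmas~\ref{lm_cancelrep_matrix1}--\ref{lm_cancelrep_matrix2} by an inductive row/column cancellation and a kernel-dimension count, whereas you package the same reduction into one application of Sylvester's law of inertia via the factorization $S(X_m)=E^{T}S(\hat X_N)E$ — a cleaner, essentially equivalent argument, since $E$ having full row rank is exactly the nondegeneracy that makes the inductive argument work. For the density step, the paper factors the statement through Lemma~\ref{lm_fullsupp1} (the sampled set is a.s.\ dense in $\supp\mu$), then Proposition~\ref{prop_count_to_sep1} (dense subsets have the same limit distance signature) together with Proposition~\ref{prop:sign convergence1}; you instead run a self-contained Borel--Cantelli argument, picking for each lower bound $n$ a witnessing finite set $Y$ and using lower semicontinuity of $s_\pm$ to see that any sample falling into small enough disjoint balls around $Y$ already certifies $s_\pm\ge n$, then applying interlacing. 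The perturbation-of-eigenvalue-counts step you invoke is precisely the content hidden in the paper's proof of Proposition~\ref{prop_count_to_sep1} (and in Lemma~\ref{lem:sign_comparison}(ii)), so the underlying mechanism is the same; what your version buys is that it does not rely on the auxiliary propositions about countable and dense subsets, at the cost of redoing the density argument in probabilistic form. Both are valid; your version is a bit more self-contained, the paper's is more modular.
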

	
	\begin{proof}
		Since by Lemma~\ref{lm_fullsupp1}, one has
		\[
		\overline{\bigcup_m \hat X_{N(m,\omega)}(\omega)} = \supp\mu =X
		\]
		for $\P$-a.e.\ $\omega\in\Omega$, i.e.,\ $\bigcup_m \hat X_N$ is a countable dense set in $X$ almost surely. Therefore, we have
		\[
		s_\pm(X,d)= \lim_{m} s_\pm(S(\hat X_{N(m,\omega)}(\omega)))
		\]
		$\P$-a.s.\ in view of Proposition~\ref{prop_count_to_sep1}.
		By Lemma~\ref{lm_cancelrep_matrix1} below, we also get
		\[
		s_\pm(S(X_m(\omega)))=
		s_\pm(S(\hat X_{N(m,\omega)}(\omega)))
		\]
		for all $\omega\in\Omega$, concluding the proof.
	\end{proof}
	
	The following lemma has been used in the proof of the above Proposition~\ref{prop:sign convergence3}. 
	
	\begin{lemma}\label{lm_fullsupp1}
		Let $(X,d)$ be a separable metric space, and $\mu$ be a Borel probability measure over $X$. 
		Then 
		\[
		\overline{\bigcup_m \hat X_{N(m,\omega)}(\omega)}=\supp\mu 
		\]
		for $\P$-a.e.\ $\omega\in\Omega$, i.e.,\ $\bigcup_m \hat X_N$ is a countable dense set in $\supp\mu$. 
		In particular, if $\supp\mu$ is infinite, then $\lim_m N(m,\omega)=+\infty$ for $\P$-a.e.\ $\omega\in \Omega$ and if additionally
		$\supp\mu$ is countable, then also
		$\hat X_N\nearrow \supp\mu$ a.s.\ as $m\to \infty$.
	\end{lemma}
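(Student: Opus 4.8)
The plan is to identify the increasing union $\bigcup_m \hat X_{N(m,\omega)}(\omega)$ with the (unordered) set $\{\xi_i(\omega):i\in\N\}$ of all points ever sampled, and then to establish separately the two inclusions between its closure and $\supp\mu$, exploiting throughout the second countability of the separable metric space $X$. For the inclusion ``$\subseteq$'', recall that on a second countable metric space $X\setminus\supp\mu$ is a countable union of open $\mu$-null sets, so $\mu(\supp\mu)=1$ and hence $\P(\xi_i\in\supp\mu)=1$ for each $i$; as there are countably many indices, almost surely $\xi_i\in\supp\mu$ for all $i$, and since $\supp\mu$ is closed this gives $\overline{\bigcup_m \hat X_{N(m,\omega)}(\omega)}\subseteq\supp\mu$ a.s.

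For the reverse inclusion I would fix a countable base $\{U_k\}_{k\in\N}$ of the topology of $X$ and keep only those $U_k$ meeting $\supp\mu$; each such $U_k$ satisfies $\mu(U_k)>0$ by definition of the support. For fixed $k$ the events $\{\xi_i\in U_k\}$, $i\in\N$, are independent of common probability $\mu(U_k)>0$, so by Borel--Cantelli almost surely infinitely many $\xi_i$ lie in $U_k$; intersecting these countably many full-probability events yields one event of probability $1$ on which $\{\xi_i(\omega):i\in\N\}$ meets every basic open set intersecting $\supp\mu$. Given $x\in\supp\mu$ and $r>0$, choosing a basic $U_k$ with $x\in U_k\subseteq B(x,r)$ we have $U_k\cap\supp\mu\ne\emptyset$, hence some $\xi_i(\omega)\in U_k\subseteq B(x,r)$; thus $\{\xi_i(\omega):i\in\N\}$ is dense in $\supp\mu$, i.e.\ $\supp\mu\subseteq\overline{\bigcup_m \hat X_{N(m,\omega)}(\omega)}$ a.s. The two inclusions together prove the main claim, and this set is at most countable by construction.

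The remaining assertions are then immediate. If $\supp\mu$ is infinite, its dense subset $\bigcup_m \hat X_{N(m,\omega)}(\omega)$ cannot be finite, and since $N(m,\omega)=\#\hat X_{N(m,\omega)}(\omega)$ is nondecreasing in $m$ with supremum equal to $\#\bigl(\bigcup_m \hat X_{N(m,\omega)}(\omega)\bigr)=+\infty$, we get $N(m,\omega)\to+\infty$ a.s. If moreover $\supp\mu$ is countable, then $\mu$ is purely atomic, and the same Borel--Cantelli argument applied to each singleton $\{x\}$ with $\mu(\{x\})>0$ shows that almost surely every atom of $\mu$ occurs among the $\xi_i$; when these atoms exhaust $\supp\mu$ (which is the case in the countable examples of interest, e.g.\ whenever $\supp\mu$ is discrete), this upgrades the density statement to $\bigcup_m \hat X_{N(m,\omega)}(\omega)=\supp\mu$, that is, $\hat X_N\nearrow\supp\mu$ a.s.

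I do not expect a serious obstacle here: the only genuinely non-formal step is the passage from the a priori uncountable family of requirements ``$x\in\overline{\{\xi_i\}_i}$ for every $x\in\supp\mu$'' to a countable family of independent events of positive probability, which is precisely what the countable-base reduction in the second paragraph accomplishes. Everything else is measure-theoretic bookkeeping together with the elementary fact that the zero--one behaviour of $\prod_i(1-p)$ for $p=\mu(U_k)$ is decided by whether $p>0$.
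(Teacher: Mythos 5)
Your proof is correct and follows essentially the same route as the paper's: restrict to the full-measure event where all $\xi_i$ lie in $\supp\mu$ (which is closed), and establish density by reducing to a countable base of open sets meeting the support, using that each such basic set has positive $\mu$-measure to conclude that the sample a.s.\ visits it. You are somewhat more explicit than the paper about the countable-base reduction (the paper argues ``for an arbitrary open set $U$'' without spelling out why the uncountably many resulting null events combine into a single full-measure event), and you invoke the second Borel--Cantelli lemma where the paper directly computes that $\mu(X\setminus U)^n\to 0$; these are two phrasings of the same calculation. The deduction of $N(m,\omega)\to\infty$ matches the paper's.

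You also correctly flag that the final assertion of the lemma, namely that $\hat X_N\nearrow\supp\mu$ a.s.\ when $\supp\mu$ is countable, actually needs the extra hypothesis that the atoms of $\mu$ exhaust $\supp\mu$; countability of the support alone does not give it. For instance, take $X=\{0\}\cup\{1/n:n\ge 1\}$ with the usual metric and $\mu$ putting positive mass on every $1/n$ and none on $0$: then $\supp\mu=X$ is countable and closed, yet $0$ is almost surely never sampled, so $\bigcup_m\hat X_{N(m,\omega)}(\omega)\subsetneq\supp\mu$ a.s. The paper's own proof establishes only the first two claims and does not address the third; in the paper's applications the support is discrete (every point is an atom), so the omission is harmless there, but your caveat is warranted.
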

	
	\begin{proof}
		To prove the first claim, consider an arbitrary open set $U\subset X$ satisfying
		\[
		U\bigcap\overline{\bigcup_m \hat X_{N(m,\omega)}(\omega)}=\emptyset,
		\]
		hence $\xi_i(\omega)\in X\setminus U$ for all $i\in\N$. The probability of this event does not exceed
		$\mu(X\setminus U)^n$ for all $n\in\N$ and hence is zero unless $\mu(U)=0$, i.e.\ $U\cap \supp\mu=\emptyset$, showing 
		that 
		\[ 
		\supp\mu\subset \overline{\bigcup_m \hat X_{N(m,\omega)}(\omega)}=\emptyset.
		\]
		On the other hand, $\P(\{\xi_i\in \supp\mu\})= \mu(\supp\mu)=1$, and hence
		\[ 
		\supp\mu\supset \overline{\bigcup_m \hat X_{N(m,\omega)}(\omega)}
		\]
		a.s., showing the claim.

		To prove the second claim, denote for every $k\in \N$ the set
		\[
		\Omega_k\eqset \{\omega\in\Omega : \lim_m N(m,\omega)\leq k\}
		\] 
		Observe that for all $\omega\in \Omega_k$, one has 
		\[
		\# \bigcup_m \hat X_{N(m,\omega)}(\omega)\leq k.
		\]
		Hence, $\overline{\bigcup_m \hat X_{N(m,\omega)}(\omega)}\neq \supp\mu$ when $\supp\mu$ is infinite. The first claim just proven implies
		$\P(\Omega_k)=0$ in this case. Thus, 
		\[
		\P\left(\left\{\lim_m N(m,\omega)<+\infty \right\}\right) = \P\Bigl(\Omega\setminus \bigcup_k\Omega_k\Bigr)=1
		\] 
		as claimed. 
	\end{proof}
	
	\begin{lemma}\label{lm_cancelrep_matrix1}
		Let the symmetric matrix $\hat S\in \R{^{p\times p}}$ be obtained from the symmetric matrix $S\in \R^{q\times q}$ 
		cancelling all the repeating rows and all the repeating columns.
		Then $s_\pm(S)=s_\pm(\hat S)$.
	\end{lemma}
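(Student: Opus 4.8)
The plan is to realize $S$ as congruent to the block matrix $\hat S \oplus 0_{q-p}$, where $0_{q-p}$ denotes the zero square matrix of size $q-p$; Sylvester's law of inertia then yields $s_\pm(S) = s_\pm(\hat S \oplus 0_{q-p}) = s_\pm(\hat S)$ immediately, since the zero block contributes nothing to either count.

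To produce this congruence I would argue as follows. Since $S$ is symmetric, the $i$-th and $j$-th rows of $S$ coincide if and only if the $i$-th and $j$-th columns do, so ``row $i$ equals row $j$'' is a well-defined equivalence relation on $\{1,\dots,q\}$; it has exactly $p$ classes, and if $i\sim j$ then $S(e_i-e_j)=0$. Hence the span $W$ of all differences $e_i-e_j$ with $i\sim j$ is contained in $\ker S$. Choosing a transversal $R$ of the partition (one representative per class) and putting $V\eqset\Span\{e_i : i\in R\}$, one checks that $\R^q=W\oplus V$ — conveniently, by introducing the idempotent $\pi\colon\R^q\to\R^q$ with $\pi(e_k)=e_{\rho(k)}$, where $\rho(k)$ is the representative of the class of $k$, for which $\pi^2=\pi$, $\ker\pi=W$ and $\Img\pi=V$, so that $\dim W=q-p$. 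Now the quadratic form $x\mapsto x^TSx$ splits along $W\oplus V$: it vanishes on $W$ (as $W\subseteq\ker S$), there is no cross term (for $w\in W$ one has $w^TSv=(Sw)^Tv=0$), and on $V$, in the basis $\{e_i:i\in R\}$, it is the form with matrix $\bigl(S_{ij}\bigr)_{i,j\in R}$. One last point to record is that, because all rows in a given class agree and $S$ is symmetric, the submatrix $\bigl(S_{ij}\bigr)_{i,j\in R}$ does not depend on the choice of transversal $R$, so it is exactly $\hat S$ up to the harmless permutation that orders the classes; this establishes the desired congruence.

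An equivalent, purely inductive route — which I would fall back on if a matrix-level argument is preferred — removes one repeated index at a time: if row $i$ equals row $j$ with $i\ne j$, and $E$ is the elementary matrix subtracting row $i$ from row $j$, then $ESE^T$ is symmetric with zero $j$-th row and column and with the remaining principal submatrix equal to that of $S$; deleting index $j$ therefore changes $S$ only by a congruence and a zero block, and (as a short computation with the symmetry of $S$ shows) creates no new coincidences among the surviving rows, so one iterates $q-p$ times to reach $\hat S$. I do not foresee a genuine obstacle here: the only slightly delicate points are the bookkeeping $\R^q=W\oplus V$ with $\dim W=q-p$ and the well-definedness of $\hat S$ independently of the chosen representatives, both of which are routine (and the inductive formulation avoids even the dimension count).
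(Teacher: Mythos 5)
Your proof is correct, and it takes a genuinely different route from the paper's. The paper reduces (after a permutation) to removing one duplicated index at a time, and at each step uses the Cauchy interlacing theorem to get $s_\pm(S)\geq s_\pm(\hat S)$, then observes that the nullity increases by exactly one per removed duplicate, so a rank--nullity count forces $s_+(S)+s_-(S)=s_+(\hat S)+s_-(\hat S)$ and hence equality. You instead build an explicit congruence $S\cong\hat S\oplus 0_{q-p}$ by splitting $\R^q=W\oplus V$ with $W\subseteq\ker S$ spanned by the differences $e_i-e_j$ for coinciding indices and $V$ spanned by one representative per class, and then invoke Sylvester's law of inertia; your inductive fallback with the elementary matrix $E$ is the one-index-at-a-time version of the same congruence idea and avoids the interlacing theorem entirely. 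Both proofs are sound. The paper's is shorter because it leans on interlacing and a dimension count; yours is more constructive in that it exhibits the congruence directly, makes the $\dim W=q-p$ bookkeeping explicit via the idempotent $\pi$, and gives Sylvester a clean block target, at the cost of a slightly longer setup. The only place to be careful (and you handle it) is verifying that the elementary step $ESE^T$ creates no new row coincidences among the surviving indices, which uses symmetry and the fact that the representative $i$ remains in the index set.
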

	
	\begin{proof}
		For each $k$ and $j$, interchanging the $k$-th with $j$-th row and the $k$-th with $j$-th column simultaneously does not change the signature of a matrix, so the proof reduces just to an inductive application of Lemma~\ref{lm_cancelrep_matrix2} below.
	\end{proof}

	\begin{lemma}\label{lm_cancelrep_matrix2}
		Let the symmetric matrix $S\in \R{^{(p+1)\times (p+1)}}$ be of the block form
		\begin{align*}
			S=\begin{bmatrix}
				\hat{S} && v\\
				v^t && a
			\end{bmatrix},
		\end{align*}
		where $\hat S\in \R{^{p\times p}}$ is a symmetric matrix, $v\in \R^p$ and $a\in \R$
		be such that the $p$-th and $(p+1)$-th row as well as the $p$-th and $(p+1)$-th column of $S$ are identical.
		Then $s_\pm(S)=s_\pm(\hat S)$.
	\end{lemma}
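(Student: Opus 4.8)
The plan is to exhibit an explicit \emph{congruence} of $S$ that strips off the duplicated row and column, and then to invoke invariance of the signature under congruence. First I would unwind the hypothesis: writing out the equality of the $p$-th and $(p+1)$-th rows of $S$ gives $v = (\hat S_{p,1},\dots,\hat S_{p,p})^{T}$ and $a = v_p = \hat S_{p,p}$; by symmetry of $S$ the $p$-th and $(p+1)$-th columns coincide as well. (Only the row/column equalities themselves, not these explicit identifications, will actually be used.)

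Next I would introduce the unipotent elementary matrix $E \eqset \Id_{p+1} - e_{p+1} e_p^{T} \in \R^{(p+1)\times(p+1)}$, where $e_p,e_{p+1}$ are the last two vectors of the standard basis of $\R^{p+1}$. Left multiplication by $E$ subtracts the $p$-th row of $S$ from its $(p+1)$-th row, producing a matrix $ES$ whose last row is the zero vector while rows $1,\dots,p$ are untouched; moreover the column-equality of $S$ is inherited by $ES$, so the $p$-th and $(p+1)$-th columns of $ES$ are still equal. Right multiplication by $E^{T} = \Id_{p+1} - e_p e_{p+1}^{T}$ then subtracts the $p$-th column from the $(p+1)$-th column, which annihilates it, leaving columns $1,\dots,p$ unchanged. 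Hence
\[
E S E^{T} \;=\; \begin{bmatrix} \hat S & 0 \\ 0 & 0 \end{bmatrix},
\]
the upper-left $p\times p$ block being exactly $\hat S$ because $E$ and $E^{T}$ modify only the last row and the last column.

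Finally, since $\det E = 1$, the matrix $E$ is invertible, so $S$ and $E S E^{T}$ are congruent; by Sylvester's law of inertia they share the same numbers of positive and of negative eigenvalues. The block-diagonal matrix on the right clearly has $s_\pm = s_\pm(\hat S)$, the extra $1\times 1$ zero block contributing only to the multiplicity of the eigenvalue $0$. Therefore $s_\pm(S) = s_\pm(\hat S)$, as claimed. I expect no real obstacle here: the only step demanding a moment's care is checking that the column operation genuinely kills the last column (rather than merely simplifying it), which is where one uses that the column-equality of $S$ is preserved by the preceding row operation; the rest is bookkeeping plus the standard congruence-invariance of inertia.
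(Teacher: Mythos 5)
Your proof is correct, and it takes a genuinely different route from the paper's. The paper argues in two steps: it first invokes Cauchy interlacing to get the one-sided inequality $s_\pm(S)\ge s_\pm(\hat S)$, and then observes that the duplicated row/column forces $s_0(S)=s_0(\hat S)+1$, so counting dimensions ($s_-+s_0+s_+ = p+1$ for $S$, $=p$ for $\hat S$) gives $s_-(S)+s_+(S)=s_-(\hat S)+s_+(\hat S)$, which combined with the interlacing inequality yields equality. You instead exhibit an explicit unipotent $E=\Id_{p+1}-e_{p+1}e_p^T$ with $ESE^T=\begin{bmatrix}\hat S&0\\0&0\end{bmatrix}$ and invoke Sylvester's law of inertia. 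Your argument is a bit more self-contained: the paper's claim $s_0(S)=s_0(\hat S)+1$ is asserted without proof and is in fact most cleanly justified by exactly the congruence you construct, so your route subsumes theirs. The one small point worth making explicit, which you do address, is that the column operation only annihilates the last column because the preceding row operation preserves the column equality; your verification of that is correct (it uses the symmetry of $S$, i.e.\ that $v_i=\hat S_{i,p}$).
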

	
	\begin{proof}
		Clearly, we have (e.g., by the Cauchy interlace theorem) that $s_\pm(S)\geq s_\pm(\hat S)$. On the other hand, since
		the $p$-th and $(p+1)$-th row as well as the $p$-th and $(p+1)$-th column of $S$ are identical, we have that 
		$s_0(S)= s_0(\hat S)+1$, where $s_0(S)$ stands for the number of zero eigenvalues of $S$ (i.e.\ the dimension of the kernel of the latter). Since also
		\[
		s_-(S)+ s_0(S) + s_+(S) = p+1 , \qquad s_-(\hat S)+ s_0(\hat S) + s_+(\hat S) =p,
		\]
		we get
		\begin{align*}
			s_-( S)+ s_0(S) + s_+(S) &= (s_-(\hat S)+ s_0(\hat S) + s_+(\hat S))+1 =\\
			&= s_-(\hat S)+ s_0(S) + s_+(\hat S) ,
		\end{align*}	
		which implies $s_-(S)+ s_+(S)=s_-(\hat S)+ s_+(\hat S)$, hence the claim.
	\end{proof}

	\subsection{Limit distance signature of the space and signatures of MDS defining operators}
	
	The following result relates the limit distance signature of $(X,d,\mu)$ to the numbers of negative and positive eigenvalues of any
	MDS defining operator.
	
	\begin{proposition}
		\label{prop:sign convergence2_sep}
		Let $(X,d)$ be a separable metric space.
		If $\mu$ has finite $4$-th moment and full support in $X$, one has
		\[
		s_\pm(K_\mu)= s_\pm(X,d),
		\]
		where $K_\mu$ is defined by~\eqref{eqn:MDS opK}.
	\end{proposition}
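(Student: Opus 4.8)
The plan is to prove the two inequalities $s_\pm(K_\mu)\le s_\pm(X,d)$ and $s_\pm(K_\mu)\ge s_\pm(X,d)$ separately; throughout write $K\eqset K_\mu$. Since $\mu$ has finite $4$-th moment, the kernel $k(x,y)\eqset -d^2(x,y)/2$ belongs to $L^2(X\times X,\mu\otimes\mu)$, so $K$ is a self-adjoint Hilbert--Schmidt operator on $L^2(X,\mu)$ whose eigenvalues can be arranged as $\lambda_1(K)\ge\lambda_2(K)\ge\cdots\ge 0\ge\cdots\ge\lambda_{-2}(K)\ge\lambda_{-1}(K)$, accumulating only at $0$.

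For the upper bound $s_\pm(K)\le s_\pm(X,d)$ I would invoke~\cite{KoltchinskiiGine-RandomIntOp2000}: since $k\in L^2(\mu\otimes\mu)$, for $\P$-a.e.\ $\omega$ the ordered eigenvalues of the empirical matrices $\frac1m S(X_m(\omega))$ converge, index by index from the top and from the bottom, to those of $K$, i.e.\ $\lambda_j\bigl(\frac1m S(X_m(\omega))\bigr)\to\lambda_j(K)$ and $\lambda_{-j}\bigl(\frac1m S(X_m(\omega))\bigr)\to\lambda_{-j}(K)$ as $m\to\infty$, for each fixed $j\in\N$. Fix any $n\le s_+(K)$, so that $\lambda_n(K)>0$; then for $\P$-a.e.\ $\omega$ one has $\lambda_n\bigl(\frac1m S(X_m(\omega))\bigr)>0$ for all $m$ large enough, hence $s_+(S(X_m(\omega)))=s_+\bigl(\frac1m S(X_m(\omega))\bigr)\ge n$ eventually, and therefore $\lim_m s_+(S(X_m(\omega)))\ge n$. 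As $\supp\mu=X$, Proposition~\ref{prop:sign convergence3} identifies this limit with $s_+(X,d)$, and letting $n\nearrow s_+(K)$ gives $s_+(X,d)\ge s_+(K)$; running the same argument with the smallest eigenvalues gives $s_-(X,d)\ge s_-(K)$.

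For the lower bound it suffices to show $s_\pm(S(X_N))\le s_\pm(K)$ for every finite subset $X_N=\{x_1,\dots,x_N\}\subset X$, since by Definition~\ref{def_limsig1} the quantity $s_\pm(X,d)$ is the supremum of the left-hand side over all such $X_N$. Because $\supp\mu=X$, every $x_i$ lies in $\supp\mu$; fixing $\varepsilon<\frac12\min_{i\ne j}d(x_i,x_j)$ the balls $B_i\eqset B(x_i,\varepsilon)$ are pairwise disjoint with $\mu(B_i)>0$. Put $e_i\eqset\fone_{B_i}/\mu(B_i)\in L^2(X,\mu)$, which are linearly independent (disjoint supports), and consider the symmetric matrix $M^\varepsilon\eqset\bigl(\langle Ke_i,e_j\rangle\bigr)_{i,j=1}^N$ with
\[
\langle Ke_i,e_j\rangle=\frac{1}{\mu(B_i)\mu(B_j)}\int_{B_i}\int_{B_j}k(x,y)\,d\mu(x)\,d\mu(y)\longrightarrow k(x_i,x_j)\quad\text{as }\varepsilon\to0,
\]
because $\sup_{x\in B_i,\,y\in B_j}|k(x,y)-k(x_i,x_j)|\to0$ by continuity of $d^2$ on the bounded set $\bigcup_i B_i$. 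Hence $M^\varepsilon\to S(X_N)$ entrywise; since the eigenvalues of a symmetric matrix depend continuously on its entries, $s_+(M^\varepsilon)\ge r\eqset s_+(S(X_N))$ once $\varepsilon$ is small. For such $\varepsilon$ there is an $r$-dimensional subspace $C_0\subset\R^N$ with $c^\top M^\varepsilon c>0$ for every $c\in C_0\setminus\{0\}$; its image $V_0\eqset\{\sum_i c_i e_i:c\in C_0\}\subset L^2(X,\mu)$ is $r$-dimensional and satisfies $\langle Kg,g\rangle>0$ for every $g\in V_0\setminus\{0\}$. As $V_0$ is finite-dimensional, $\langle Kg,g\rangle$ attains a strictly positive minimum over $\{g\in V_0:\|g\|=1\}$, so by the Courant--Fischer min-max principle $K$ has at least $r$ positive eigenvalues, i.e.\ $s_+(K)\ge s_+(S(X_N))$. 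The same argument with a subspace on which the quadratic form is negative definite gives $s_-(K)\ge s_-(S(X_N))$, and taking suprema over $X_N$, combined with the upper bound, completes the proof.

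I expect the main obstacle to be the lower bound, and within it the choice of the test functions: the natural $L^2$-normalization $\fone_{B_i}/\mu(B_i)^{1/2}$ would make $\langle Ke_i,e_j\rangle\to0$, since $K$ is an averaging operator that annihilates increasingly spiky functions, so one is forced to work with the un-normalized approximate point masses $\fone_{B_i}/\mu(B_i)$ and to verify that the quadratic form they induce --- rather than a rescaled version of it --- converges precisely to $S(X_N)$. It is also worth noting that the cited random-matrix convergence alone does not deliver the lower bound, because when $s_\pm(K)$ is finite a boundary zero eigenvalue of $K$ could in principle be approached by strictly positive eigenvalues of $\frac1m S(X_m)$; the deterministic ball argument is precisely what excludes this.
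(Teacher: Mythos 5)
Your proof is correct, and it splits the equality into the same two inequalities as the paper, but routes both halves somewhat differently. For the direction $s_\pm(K_\mu)\le s_\pm(X,d)$ you invoke the Koltchinskii--Gin\'e $\ell^2$-convergence of the empirical spectra together with Proposition~\ref{prop:sign convergence3}; this is precisely the alternative argument the paper records only in the \emph{remark} following the proposition, whereas the paper's main proof passes to the completion of $(X,d)$ to make $\mu$ Radon and then uses the deterministic $\varepsilon$-net approximation of Lemma~\ref{lm_approxLinOp1}(ii). Your route thus avoids the completion step and any continuity-of-kernel considerations, at the price of invoking the probabilistic random-operator result --- a trade-off the paper itself acknowledges. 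For the reverse direction $s_\pm(K_\mu)\ge s_\pm(S(X_N))$ your argument is morally the same as Lemma~\ref{lm_approxLinOp1}(i): the test functions $\fone_{B_i}/\mu(B_i)$ are exactly the paper's $v_i$, and both proofs rest on the quadratic form $(Ke_i,e_j)$ converging to the entries of $S(X_N)$. You conclude more directly, via entrywise eigenvalue continuity and Courant--Fischer, while the paper wraps the same content in the operator-theoretic formalism of $Q_N$, $\tilde A_N$, $E_N$ and Lemma~\ref{lm spm_control1}; your version is shorter and more elementary. Your closing observations are also on point: the normalization $\fone_{B_i}/\mu(B_i)$ rather than $\fone_{B_i}/\mu(B_i)^{1/2}$ is indeed essential, and the random-spectrum convergence alone cannot deliver the $\ge$ direction because of possible eigenvalue creation at $0$. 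One small stylistic note: appealing to ``continuity of $d^2$ on a bounded set'' for the uniform estimate $\sup_{x\in B_i,\,y\in B_j}|k(x,y)-k(x_i,x_j)|\to 0$ is slightly loose without compactness; the clean justification is the explicit bound $|d^2(x,y)-d^2(x_i,x_j)|\le 2\varepsilon\bigl(2\max_{i,j}d(x_i,x_j)+2\varepsilon\bigr)$ coming from the triangle inequality, which is uniform over the finitely many pairs $(i,j)$.
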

	
	\begin{remark}
		When $\mu$ has finite $4$-th moment but does not have full support in $X$, we only can assert that
		\begin{equation}\label{eq_sdleqsk1}
			\begin{aligned}
				s_\pm(K_\mu)\leq s_\pm(X,d).
			\end{aligned}
		\end{equation}
		In fact, by the above Proposition~\ref{prop:sign convergence2_sep} one has 
		\[
		s_\pm(K_\mu)=s_\pm(\supp\mu,d),
		\]
		and $s_\pm(\supp\mu,d)\leq s_\pm(X,d)$ by Proposition~\ref{prop_dsign_upper1}.
		The inequality in~\eqref{eq_sdleqsk1} can be sharp as easily seen from the example, say, of $X$ just a Euclidean plane (i.e.,\ $X=\R^2$
		equipped with the Euclidean distance), so that $s_+(X,d)=2$. and $\mu$ having a line as a support, so that
		$s_+(K_\mu)=s_+(\supp\mu,d)=s_+(\R,d)=1$, see Example~\ref{ex_spmEucl1}.
	\end{remark}
	
	\begin{proof}
		Without loss of generality, we may assume $(X,d)$ to be complete (if not, 
		we may extend $d$ and $\mu$ to the completion of $(X,d)$ and work in the latter completion). 
		Then by the estimate~\eqref{eq_spmA2} from Lemma~\ref{lm_approxLinOp1}(ii)
		(applied with $A\eqset K_\mu$, $a\eqset -d^2/2$), one has for some sequence of finite subsets $X_N\subset X$, the following chain of inequalities
		\begin{equation}\label{eq_sKmu1}
			s_+(K_\mu)\leq \liminf_N s_+(S(X_N)) \leq s_+(X,d).
		\end{equation}
		On the other hand,
		\begin{equation*}\label{eq_sKmu2a}
			s_+(K_\mu)\geq s_+(S(X_N))
		\end{equation*}
		for every finite $X_N\subset X$ by estimate~\eqref{eq_spmA1} in Lemma~\ref{lm_approxLinOp1}(i), applied with
		$A\eqset K_\mu$, $a\eqset -d^2/2$. Hence
		\begin{equation*}\label{eq_sKmu2b}
			s_+(K_\mu)\geq s_+(X,d).
		\end{equation*}
		This inequality together with~\eqref{eq_sKmu1} gives the claim
		\[
		s_+(K_\mu)= s_+(X,d).
		\]
		The analogous claim for $s_-$ is completely symmetric.
	\end{proof}
	
	\begin{remark}
		An alternative proof of~\eqref{eq_sKmu1} may be obtained by observing that for $\P$-a.e.\ $\omega\in\Omega$, one has
		\begin{equation}\label{eq_koclGine1}
			\lim_m \quad \inf \left\{\left| \pi \left(\sigma\left(\frac{1}{m} S(X_m(\omega)) \right)\right)- \sigma(K_\mu)\right| : \pi\in S_\infty\right\} = 0
		\end{equation}	
		using theorem~3.1 from~\cite{KoltchinskiiGine-RandomIntOp2000}. Here $\sigma(K_\mu)$ stands for the spectrum of $K_\mu$, $\sigma(\frac{1}{m} S(X_m))$ stands for the spectrum of the random matrix
		$\frac{1}{m} S(X_m)$, both being considered as elements of the space $\ell^2$ of square summable sequences; and $S_\infty$ stands for the set of all possibly infinite permutations of a countable set. 
		This provides
		\[
		\lim_m s_\pm \left(\frac{1}{m} S(X_m(\omega)) \right) \geq s_\pm (K_\mu)
		\]
		for $\P$-a.e.\ $\omega\in\Omega$. Hence~\eqref{eq_sKmu1} holds because
		\[
		s_\pm \left( S(X_m(\omega))\right)= s_\pm \left(\frac{1}{m} S(X_m(\omega))\right)
		\]
		for all $\omega\in \Omega$ and 
		\[
		\lim_m s_\pm \left( S(X_m(\omega))\right)= s_\pm(X,d)
		\]
		for $\P$-a.e.\ $\omega\in\Omega$ by 
		Proposition~\ref{prop:sign convergence3}. 
		The nice feature of this argument is that it requires neither the completion of the space nor the use of continuity of the operator kernel. 
	\end{remark}

	The following result on the relationship between the signatures of the operators $K_\mu$ and $T_\mu$ is also worth being mentioned.
	
	\begin{proposition}\label{prop: countable space sign}
		Let $(X, d, \mu)$ be a metric measure space with a
		Borel probability measure $\mu$ having finite $4$-th moment and full support in $X$, and 
		the operators $K=K_\mu$ and $T=T_\mu$ are defined by~\eqref{eqn:MDS opK} 
		and~\eqref{eqn:MDS opT}, respectively. 
		Then 
		\begin{equation}\label{eq_spmKT1}
			s_\pm(K) - 1 \le s_\pm(T) \le s_\pm(K).
		\end{equation}	
		Moreover, if $\fone \not\perp \ker K$, then $s_\pm(T) = s_\pm(K)$.
	\end{proposition}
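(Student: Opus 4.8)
The plan is to reduce the statement to a standard compression estimate for compact self-adjoint operators and then to use the hypothesis $\fone\not\perp\ker K$ to rule out the loss of a single eigenvalue. First, since $\mu$ is a probability measure we have $\|\fone\|=1$, so $L^2(X,\mu)=\R\fone\oplus\fone^{\perp}$ is an orthogonal decomposition and $P$ is the orthogonal projector onto $\fone^{\perp}$. Writing $T=PKP$ in block form with respect to this decomposition, one checks immediately that $T\fone=PK(P\fone)=0$ and that $T$ maps $\fone^{\perp}$ into itself via $v\mapsto PKv$; hence $T=0\oplus K_0$, where $K_0\eqset PKP|_{\fone^{\perp}}$ is a compact (Hilbert--Schmidt) self-adjoint operator on the Hilbert space $\fone^{\perp}$, and consequently $s_\pm(T)=s_\pm(K_0)$. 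The crucial identity is that
\[
(K_0 v,v)=(PKv,v)=(Kv,v)\qquad\text{for every }v\in\fone^{\perp},
\]
so the quadratic form of $K_0$ on $\fone^{\perp}$ is just the restriction of the quadratic form of $K$.

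Next I would invoke the variational description of the positive index of a compact self-adjoint operator $A$ on a Hilbert space, namely $s_+(A)=\sup\{\dim E:\ E\text{ a subspace},\ (Av,v)>0\text{ for all }v\in E\setminus\{0\}\}$ (possibly $+\infty$), which is immediate from the spectral theorem, together with the symmetric statement for $s_-$. The upper bound in~\eqref{eq_spmKT1} is then immediate: a subspace $E\subset\fone^{\perp}$ on which the form of $K_0$ — equivalently the form of $K$ — is positive definite is also a positive subspace for $K$, so $s_+(K_0)\le s_+(K)$, and symmetrically for $s_-$. For the lower bound, take any subspace $E\subset L^2(X,\mu)$ with $\dim E=s_+(K)$ (or $\dim E$ arbitrarily large if $s_+(K)=+\infty$) on which $(Kv,v)>0$ off the origin; then $E\cap\fone^{\perp}$ has codimension at most one in $E$ and $(K_0 v,v)=(Kv,v)>0$ on it, whence $s_+(K_0)\ge s_+(K)-1$, and likewise for $s_-$. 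This establishes~\eqref{eq_spmKT1}.

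To remove the possible loss when $\fone\not\perp\ker K$, pick $u\in\ker K$ with $(u,\fone)\ne 0$; in particular $u\ne 0$ and $u\notin\fone^{\perp}$. Fix a subspace $E\subset L^2(X,\mu)$ with $\dim E=s_+(K)$ and $(Kv,v)>0$ on $E\setminus\{0\}$ (if $s_+(K)=+\infty$, run the argument with $\dim E$ arbitrarily large and let it grow at the end). Since $(Ku,u)=0$ we have $u\notin E$, so $W\eqset E\oplus\R u$ is a direct sum of dimension $\dim E+1$, and for $v=e+tu\in W$ the identity $(Kv,v)=(Ke,e)+2t(e,Ku)+t^2(Ku,u)=(Ke,e)$ (using $Ku=0$ and self-adjointness) shows that the form of $K$ is positive on $W$ except on the line $\R u$. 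Because $u\in W\setminus\fone^{\perp}$, the subspace $W\cap\fone^{\perp}$ has dimension exactly $\dim W-1=s_+(K)$ and contains no nonzero multiple of $u$; hence for $v\in(W\cap\fone^{\perp})\setminus\{0\}$, writing $v=e+tu$ forces $e\ne 0$ and so $(K_0 v,v)=(Kv,v)=(Ke,e)>0$. Thus $s_+(T)=s_+(K_0)\ge s_+(K)$, which together with~\eqref{eq_spmKT1} gives $s_+(T)=s_+(K)$; the argument for $s_-$ is identical with the inequalities reversed.

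The only genuinely delicate point is this last step: the compression $K\mapsto K_0$ can a priori destroy one positive (or one negative) direction, and one must recover it. The device is to enlarge a maximal positive subspace $E$ of $K$ by the degenerate direction $u\in\ker K$ and then intersect the enlarged space with $\fone^{\perp}$; the hypothesis $(u,\fone)\ne 0$ is precisely what guarantees that this intersection discards the null line $\R u$ rather than a genuinely positive direction. Everything else — the block decomposition of $T$ and the min--max bookkeeping for compact self-adjoint operators — is routine.
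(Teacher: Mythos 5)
Your proof is correct, and it takes a genuinely different route for the key equality case. The paper factors the whole statement through the abstract Lemma~\ref{lm spm_control1} on compressions $B=Q^*AQ$, applied with $A=K$, $Q=P_\mu$, $B=T$, $k=1$; for the final claim ($\fone\not\perp\ker K\Rightarrow s_\pm(T)=s_\pm(K)$), that lemma's part~(iii) uses a \emph{density/approximation} argument: one notes $B=(P_AQ)^*A(P_AQ)$ with $P_A$ the projector onto $(\ker A)^\perp$, shows $\Img(P_AQ)$ is dense in $(\ker A)^\perp$, and then approximates positive eigenvectors of $A$ by vectors in $\Img(P_AQ)$ closely enough to preserve positive-definiteness of the Gram matrix. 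You instead work with the explicit block decomposition $T=0\oplus K_0$ on $\R\fone\oplus\fone^\perp$, invoke the variational (max-dimension-of-positive-subspace) characterization of $s_\pm$ for compact self-adjoint operators, and for the equality case give a purely linear-algebraic construction: augment a maximal $K$-positive subspace $E$ by the degenerate direction $u\in\ker K$ with $(u,\fone)\ne 0$, and observe that $(E\oplus\R u)\cap\fone^\perp$ has dimension $s_+(K)$ and is $K_0$-positive because the $\fone$-functional kills exactly the null line $\R u$. Your argument avoids the approximation step entirely, which is cleaner and more transparent for this particular application; the trade-off is that it is tailored to the codimension-one compression $P_\mu$, whereas the paper's lemma handles arbitrary codimension $k$ and more general intertwiners $Q$ uniformly. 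One small remark: when $s_+(K)=+\infty$, the inequality $s_+(T)\geq s_+(K)-1$ already forces $s_+(T)=+\infty$, so the augmentation argument is only genuinely needed when $s_+(K)<\infty$ --- you acknowledge this implicitly but it could be stated outright rather than via the ``let $\dim E$ grow'' phrasing.
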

	
	\begin{proof}
		The estimates~\eqref{eq_spmKT1}
		are just Lemma~\ref{lm spm_control1}(i) and~(ii) applied with $A\eqset K$, $Q\eqset P$, and $B\eqset T$, $H_1 = H_2 \eqset L^2(X,\mu)$, $k=1$.
		Lemma~\ref{lm spm_control1}(iii) with the same notations implies also the last claim
		since $(\Img P)^\perp = \fone^\perp$ and $\fone \notin \ker K$ (otherwise 
		$\mu$ is a Dirac delta measure, $T = K = 0$, and $\ker K = \{0\}$, a contradiction), and hence
		$\fone \not\perp \ker K$ implies $(\Img P)^\perp \cap (\ker K)^\perp=\{0\}$.
	\end{proof}
	
	Note that if $\fone$ is an eigenfunction of $K$ with a nonzero eigenvalue, then $\fone \perp \ker K$.
	
	Finally, we provide the following characterization of $s_\pm (T_\mu)$, similar to Proposition~\ref{prop:sign convergence2_sep} for $s_\pm (K_\mu)$.
	
	\begin{proposition}\label{prop_sigTmu1}
		Let $(X,d)$ be a separable metric space.
		If $\mu$ has finite $4$-th moment and full support in $X$, then one has
		\begin{equation}\label{eq_sigTmu11}
			s_\pm (T_\mu) = \sup\{ s_\pm (\Pi_N S(X_N) \Pi_N ) : X_N\subset X\, \mbox{finite}, \# X_N =N \},
		\end{equation}
		where $T_\mu$ is defined by~\eqref{eqn:MDS opT}.
		In particular, $s_\pm (T_\mu)$ are the same for all $\mu$ with finite $4$-th moment and full support in $X$.
	\end{proposition}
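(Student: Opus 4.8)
The plan is to establish the two inequalities between $s_\pm(T_\mu)$ and the supremum on the right-hand side of~\eqref{eq_sigTmu11}, which I abbreviate by $\tilde s_\pm$; the argument follows the scheme of the proof of Proposition~\ref{prop:sign convergence2_sep}, but carrying along one extra ``centering'' bookkeeping.

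A useful preliminary observation is that for every finite subset $X_N=\{x_i\}_{i=1}^N\subset X$ one has
\[
\Pi_N S(X_N)\Pi_N=\Pi_N K_N\Pi_N,\qquad K_N\eqset\bigl(k_T(x_i,x_j)\bigr)_{i,j=1}^N,
\]
where $k_T$ is the kernel from~\eqref{eq_defT1}. Indeed, by~\eqref{eq_defT2} the entry $-\tfrac12 d^2(x_i,x_j)$ equals $k_T(x_i,x_j)-\tfrac12 k_T(x_i,x_i)-\tfrac12 k_T(x_j,x_j)$, so $S(X_N)$ and $K_N$ differ by a matrix of the form $-\tfrac12(\delta\vone_N^T+\vone_N\delta^T)$ with $\delta\eqset(k_T(x_i,x_i))_i$, and such a matrix is annihilated on both sides by $\Pi_N$ since $\Pi_N\vone_N=0$. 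Recall that $\Pi_N$ is precisely the orthogonal projector onto $\vone_N^\perp$, the finite-dimensional analogue of $P_\mu$; that $k_T$ is a continuous kernel (which needs only the finite second moment of $\mu$); and that, by~\eqref{eq_defT1}, $T_\mu$ is the self-adjoint Hilbert--Schmidt integral operator with kernel $k_T$.

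To prove $\tilde s_\pm\le s_\pm(T_\mu)$, I would fix a finite $X_N$ and chain two elementary bounds. Since $\Pi_N$ is a projector of corank one, Lemma~\ref{lm spm_control1}(ii) (or the Cauchy interlacing theorem) gives $s_\pm(\Pi_N K_N\Pi_N)\le s_\pm(K_N)$; and Lemma~\ref{lm_approxLinOp1}(i), applied to the operator $A\eqset T_\mu$ with kernel $a\eqset k_T$, gives $s_\pm(K_N)=s_\pm\bigl((k_T(x_i,x_j))_{i,j}\bigr)\le s_\pm(T_\mu)$. Combined with the identity above and a supremum over all finite $X_N$, this yields the claim.

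The reverse inequality $s_\pm(T_\mu)\le\tilde s_\pm$ is, I expect, the main obstacle, because the crude interlacing estimate only delivers $\tilde s_\pm\ge s_\pm(T_\mu)-1$ --- the corank-one projector $\Pi_N$ could a priori destroy one positive (resp.\ negative) direction --- so one must instead argue directly, in effect reproving Lemma~\ref{lm_approxLinOp1}(ii) while respecting the centering. Fix a finite $p\le s_+(T_\mu)$ and choose a $p$-dimensional subspace $V\subset L^2(X,\mu)$ on which the quadratic form $v\mapsto(T_\mu v,v)$ is positive definite; replacing $V$ by $P_\mu V$ (still $p$-dimensional, since $(T_\mu v,v)=(T_\mu P_\mu v,P_\mu v)$) we may assume $V\subset\Img P_\mu=\fone^\perp$, so that $(K_\mu v,v)=(T_\mu v,v)>0$ on $V\setminus\{0\}$ and $\int_X v\,d\mu=0$ for all $v\in V$. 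For a basis $v_1,\dots,v_p$ of $V$ I would then approximate each $v_l$ in $L^2(X,\mu)$ --- with the usual truncation and conditional-expectation argument exactly as in the proof of Lemma~\ref{lm_approxLinOp1}(ii), reducing to a bounded region on which $k=-d^2/2$ is uniformly continuous --- by $\tilde v_l\eqset\sum_{i=1}^N\bar v_{l,i}\fone_{A_i}$, $\bar v_{l,i}\eqset\mu(A_i)^{-1}\int_{A_i}v_l\,d\mu$, along a refining sequence of measurable partitions $X=\bigsqcup_i A_i$ of vanishing mesh into pieces of positive measure, with representatives $x_i\in A_i$. The crucial point is that the vectors $a^{(l)}\eqset\bigl(\int_{A_i}v_l\,d\mu\bigr)_{i=1}^N\in\R^N$ satisfy $\sum_i a^{(l)}_i=\int_X v_l\,d\mu=0$, hence $\Pi_N a^{(l)}=a^{(l)}$; and a direct computation, using the continuity of $k$ and $\tilde v_l\to v_l$ in $L^2$, shows $(\Pi_N S(X_N)\Pi_N a^{(l)},a^{(m)})=(S(X_N)a^{(l)},a^{(m)})\to(K_\mu v_l,v_m)$ as the partition refines. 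Since the $a^{(l)}$ are linearly independent once the mesh is small enough (because the $v_l$ are), the quadratic form of $\Pi_N S(X_N)\Pi_N$ is positive definite on the $p$-dimensional subspace $\Span\{a^{(1)},\dots,a^{(p)}\}\subset\vone_N^\perp$, so that $s_+(\Pi_N S(X_N)\Pi_N)\ge p$ and hence $\tilde s_+\ge p$; the argument for $s_-$ is symmetric. Letting $p$ run over all admissible values gives $s_\pm(T_\mu)\le\tilde s_\pm$, hence~\eqref{eq_sigTmu11}; the last assertion is then immediate, since $\tilde s_\pm$ depends only on $(X,d)$.
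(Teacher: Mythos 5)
Your proof is correct, but it is considerably longer than the paper's, and most of the extra labour re-derives a bound the paper already has on hand. The paper's argument for this proposition is a one-liner: replicate the proof of Proposition~\ref{prop:sign convergence2_sep}, replacing~\eqref{eq_spmA1} by~\eqref{eq_spmP1} and~\eqref{eq_spmA2} by~\eqref{eq_spmP2}. Applied with $A\eqset K_\mu$ and $a\eqset -d^2/2$ (so that $A(X_N)=S(X_N)$ and $P_\mu A P_\mu=T_\mu$), inequality~\eqref{eq_spmP1} gives directly $s_\pm(T_\mu)\ge s_\pm(\Pi_N S(X_N)\Pi_N)$ for every finite $X_N\subset\supp\mu=X$, and~\eqref{eq_spmP2} provides a sequence with $s_\pm(T_\mu)\le\liminf_N s_\pm(\Pi_N S(X_N)\Pi_N)$; these are exactly the two sides of~\eqref{eq_sigTmu11}. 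Your first direction is a valid but unnecessary detour through the kernel $k_T$ and the identity $\Pi_N S(X_N)\Pi_N=\Pi_N K_N\Pi_N$; note the citation there should be Lemma~\ref{lm spm_control1}(i), not~(ii) (the Cauchy interlacing alternative you give is fine). Your second direction is where the real overlap is: you say the crude estimate ``only delivers $\tilde s_\pm\ge s_\pm(T_\mu)-1$'' and that one must re-prove Lemma~\ref{lm_approxLinOp1}(ii) ``respecting the centering'' --- but~\eqref{eq_spmP2} of that very lemma already \emph{is} the centered version. Your re-derivation (pass to $V\eqset P_\mu V$, build the conditional-expectation approximants $\tilde v_l$, exploit $\sum_i a^{(l)}_i=\int v_l\,d\mu=0$ so that $\Pi_N a^{(l)}=a^{(l)}$, then use Gram-matrix convergence) is sound and closely mirrors the paper's own proof of that lemma; what you gain is a self-contained, unfolded argument, at the cost of not noticing it was already available to cite.
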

	
	\begin{proof}
		It suffices to replicate word-to-word the proof of Proposition~\ref{prop:sign convergence2_sep} replacing the use of 
		the estimate~\eqref{eq_spmA1} by~\eqref{eq_spmP1} and~\eqref{eq_spmA2} by~\eqref{eq_spmP2}.
	\end{proof}
	
	%
	
	\section{The role of $s_+$ and $s_-$}
	
	\subsection{Embedding in a Hilbert space}
	We feel almost obliged to mention first the following theorem on existence of an isometric embedding into a Hilbert space, which is essentially just a reformulation of the classical Schoenberg theorem~\cite[theorem 3.1]{alfakih2018euclidean}. 
	
	\begin{proposition}\label{prop_Hllbert1}
		The following statements for a separable metric space $(X,d)$ are equivalent.
		\begin{itemize}
			\item[(i)] The space $(X,d)$ is isometrically embeddable in a Hilbert space.
			\item[(ii)] For some Borel probability measure $\mu$ on $X$ with full support and finite $4$-th moment, one has $s_-(T_\mu)=0$, i.e.,\ $T_\mu$ is positive semidefinite.
			\item[(iii)] For every Borel probability measure $\mu$ on $X$, 
			one has $s_-(T_\mu)=0$. 
		\end{itemize}
		In case~(ii), for an isometric embedding of $X$ in $\ell^2$, we may take the map 
		$f\colon X\to \ell^2$ 
		defined by
		\begin{equation}\label{eq_Hilbert_emb1}
			f(x)\eqset \left(\sqrt{\lambda_i} u_i(x)\right)_i, 
		\end{equation}	
		where
		$\lambda_i\in \R$ be strictly positive eigenvalues (counting multiplicities) and $u_i\in L^2(X,\mu)$ the respective eigenfunctions
		of the operator $T_\mu \colon L^2(X,\mu)\to L^2(X,\mu)$ (defined by~\eqref{eqn:MDS opT}) normalized so that $\|u_i\|_2=1$ for all $i$.
		
		In all these cases, one necessarily has $s_-(X,d)=1$.
	\end{proposition}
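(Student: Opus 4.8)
The plan is to prove the cycle of implications $(ii)\Rightarrow(i)\Rightarrow(iii)\Rightarrow(ii)$, extracting the explicit isometric embedding~\eqref{eq_Hilbert_emb1} as a by-product of $(ii)\Rightarrow(i)$, and then to deduce the equality $s_-(X,d)=1$ from results already established above.

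The implication $(iii)\Rightarrow(ii)$ is essentially trivial once one observes that every separable metric space carries a Borel probability measure of full support and finite $4$-th moment: fix a countable dense set $\{x_n\}\subset X$ and a base point $x_0$, and put $\mu\eqset\sum_n c_n\delta_{x_n}$ with $c_n>0$, $\sum_n c_n=1$ chosen so small that $\sum_n c_n d^4(x_0,x_n)<+\infty$; then $(iii)$ applied to this $\mu$ gives $(ii)$. For $(i)\Rightarrow(iii)$, I would take an isometric embedding $g\colon X\to\ell^2$ and write $d^2(x,y)=\|g(x)\|^2-2(g(x),g(y))+\|g(y)\|^2$. Substituting $k(x,y)=-\tfrac12 d^2(x,y)$ into~\eqref{eq_defT1}, the contributions depending on $x$ alone and on $y$ alone are annihilated by the centering, and a direct computation yields $k_T(x,y)=(\tilde g(x),\tilde g(y))$, where $\tilde g(x)\eqset g(x)-\int_X g\,d\mu$. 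Consequently $(T_\mu f,f)=\bigl\|\int_X \tilde g(x)f(x)\,d\mu(x)\bigr\|^2\ge 0$ for every admissible $f$, i.e.\ $s_-(T_\mu)=0$.

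For $(ii)\Rightarrow(i)$, together with the verification that the map $f$ of~\eqref{eq_Hilbert_emb1} works, the first step is to record that for $\mu$ of finite $4$-th moment the kernel $k_T$ is continuous on $X\times X$ (each averaging integral $x\mapsto\int_X d^2(x,y)\,d\mu(y)$ is locally Lipschitz because $|d^2(x,y)-d^2(x',y)|\le d(x,x')\bigl(2d(x',y)+d(x,x')\bigr)$ and $\mu$ has finite first moment) and that $T_\mu$ is a positive trace-class operator, since $\operatorname{tr}T_\mu=\int_X k_T(x,x)\,d\mu(x)=\tfrac12\int_X\int_X d^2\,d\mu\,d\mu<+\infty$. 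Mercer's theorem, in the version valid for continuous positive trace-class integral operators over a finite measure space whose support is all of $X$, then gives the expansion $k_T(x,y)=\sum_i\lambda_i u_i(x)u_i(y)$, convergent pointwise (in particular on the diagonal) and uniformly on compacta, with $\lambda_i>0$ and $u_i$ continuous, $\|u_i\|_2=1$. Hence $f(x)=(\sqrt{\lambda_i}u_i(x))_i\in\ell^2$, and by~\eqref{eq_defT2}
\[
\|f(x)-f(y)\|^2=\sum_i\lambda_i\bigl(u_i(x)-u_i(y)\bigr)^2=k_T(x,x)-2k_T(x,y)+k_T(y,y)=d^2(x,y),
\]
so $f$ is the desired isometric embedding into $\ell^2$. (Alternatively, $(i)$ follows less explicitly by combining $s_-(T_\mu)=0$ with Proposition~\ref{prop_sigTmu1} and the Schoenberg criterion: every finite subset of $X$ then embeds isometrically in a Hilbert space, whence so does the separable space $X$.)

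It remains to see that $s_-(X,d)=1$ whenever these conditions hold (assuming, as throughout, that $X$ is not a singleton): the inequality $s_-(X,d)\ge 1$ is the Perron--Frobenius observation in the remark following Theorem~\ref{th_spm1}, while, choosing $\mu$ with finite $4$-th moment and full support as above, Proposition~\ref{prop:sign convergence2_sep} and Proposition~\ref{prop: countable space sign} yield $s_-(X,d)=s_-(K_\mu)\le s_-(T_\mu)+1=1$. The genuinely delicate point of the whole argument is the passage, in $(ii)\Rightarrow(i)$, from the $L^2(\mu\otimes\mu)$-convergence of the spectral series of $k_T$ to its pointwise convergence on the diagonal, which is exactly what makes the \emph{concrete} map $f$ — rather than merely some abstract embedding — an isometry; this is precisely where continuity of $k_T$, positivity and the trace-class property of $T_\mu$, and the full-support hypothesis on $\mu$ all enter, through Mercer's theorem.
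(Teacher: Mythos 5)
Your argument is correct, and for the implication (i)$\Rightarrow$(iii) it takes a genuinely more direct route than the paper. Where the paper embeds each finite $X_N$ in a Euclidean space, invokes Theorem~\ref{th_pseudoeukl1} (applied to a finitely supported measure $\mu_N$) to get $s_-(T_N)=0$, and then passes to the limit through the approximation estimate~\eqref{eq_spmP2} of Lemma~\ref{lm_approxLinOp1}(ii), you instead observe directly that an isometric embedding $g\colon X\to\ell^2$ forces the kernel to be the Gram kernel $k_T(x,y)=\bigl(g(x)-\bar g,\,g(y)-\bar g\bigr)$ with $\bar g\eqset\int_X g\,d\mu$, whence $(T_\mu f,f)=\bigl\|\int_X(g-\bar g)f\,d\mu\bigr\|^2\geq 0$ for all $f\in L^2(X,\mu)$. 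This is shorter and self-contained, avoiding both the pseudo-Euclidean machinery and the approximation lemma; the paper's detour has the compensating virtue of handling the Hilbert and pseudo-Euclidean embedding statements by one uniform argument. You also make explicit a point the paper treats as evident: for (iii)$\Rightarrow$(ii) to be nonvacuous one needs a full-support probability measure with finite $4$th moment on any separable $(X,d)$, which your construction $\mu=\sum_n c_n\delta_{x_n}$ supplies. Your proof of (ii)$\Rightarrow$(i), namely Mercer's theorem applied to the continuous, positive, trace-class $T_\mu$ followed by the identity~\eqref{eq_defT2}, coincides with the paper's (you spell out the hypotheses of Mercer's theorem more carefully), and the final computation $s_-(X,d)=s_-(K_\mu)\le s_-(T_\mu)+1=1$ via Theorem~\ref{th_spm1}(iv) is identical.
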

	
	\begin{proof}
		If~(i) holds, we identify $X$ with the image of the respective isometric embedding and $\mu$ with its push-forward through this embedding. We may assume thus without loss of generality that $X\subset H$, for some Hilbert space $H$, and $\mu$ be a measure in $H$. Then every finite set $X_N\subset X$ is isometrically embedded in a finite-dimensional Euclidean space
		(the subspace $\Span X_N\subset H$). By Theorem~\ref{th_pseudoeukl1}\footnote{We may hare referred here to the celebrated Schoenberg's theorem. We use a more general Theorem~\ref{th_pseudoeukl1} instead to be self-consistent}, for the operator
		$T_N\eqset T_{\mu_N}$ with $\mu_N$ an arbitrary Borel probability measure supported on $X_N$, we have $s_-(T_N)=0$. Thus, by Lemma~\ref{lm_approxLinOp1}(ii) with $A\eqset K_\mu$ and $H$ instead of $X$, we have
		\[
		s_-(T_\mu)\leq \liminf s_-(T_N) =0,
		\]
		thus proving~(i)$\Rightarrow$(iii).
		
		The implication~(iii)$\Rightarrow$(ii) is trivial. We prove~(ii)$\Rightarrow$(i). To this aim, let $\mu$ be as in~(ii) and let $T\eqset T_\mu$. Observe that
		by Lemma~\ref{lm_lamkT2a} (with $A\eqset T$, $a\eqset k_T$),
		one has
		\begin{equation}\label{eq_lamkT21}
			\sum_{i}\lambda_i u_i(x) u_i(y) = k_T(x,y),
		\end{equation}
		with the equality formally valid only in the sense of $L^2(X\times X,\mu\otimes\mu)$. 
		But since $T$ is positive definite, by Mercer's theorem, the convergence in~\eqref{eq_lamkT21} is uniform so that
		this equation holds for all $(x,y)\in X\times X$.
		We act now exactly as in the proof of the first part of Theorem~\ref{th_pseudoeukl1}.
		Namely, from~\eqref{eq_lamkT21}, we get
		\begin{equation}\label{eq_lamkT11}
			\sum_i \lambda_i u_i^2(x) = k_T(x,x)
		\end{equation}
		for all $x\in X$. Now,~\eqref{eq_lamkT11} and~\eqref{eq_lamkT21} imply
		\begin{align*}
			\sum_i \lambda_i \left(u_i(x) - u_i(y)\right)^2 
			&= \sum_i \lambda_i u_i^2(x) + \sum_i \lambda_i u_i^2(y) - 2 \sum_i \lambda_i u_i(x) u_i(y) \\
			&= k_T(x, x) + k_T(y, y) - 2 k_T(x, y),
		\end{align*}
		which in view of~\eqref{eq_defT2}
		gives
		\begin{equation}\label{eq_lamkT31}
			\begin{aligned}
				\sum_i \lambda_i \left(u_i(x) - u_i(y)\right)^2 =d^2(x, y)
			\end{aligned}
		\end{equation}
		for all $(x,y)\in X\times X$. 
		The latter implies for the map $f$ defined by~\eqref{eq_Hilbert_emb1} the relationship holds 
		\begin{align*}
			\|f(x)-f(y)\|_{\ell_2}^2 & =
			\sum_i \lambda_i \left(u_i(x) - u_i(y)\right)^2 =d^2(x, y),
		\end{align*}
		i.e.,\ $f$ is an isometry onto the image, proving~(i).
		
		The final claim of the statement follows from the estimate $s_-(X,d)\geq 1$ valid for every metric space, and 
		from Theorem~\ref{th_spm1}(iv) which implies 
		the estimates
		\[
		s_-(X,d) = s_-(K_\mu)\leq s_-(T_\mu)+1=1
		\]
		for every $\mu$ Borel probability measure with full support and finite $4$-th moment in $X$.
		This completes the proof.
	\end{proof}
	
	\begin{remark}
		When the metric spaces $(X,d)$ is isometrically embeddable in a Hilbert space, then $s_-(X,d)=1$, but the converse is false. 
		There exist even finite metric spaces $(X,d)$ not embeddable isometrically in a Hilbert space, which still satisfy $s_-(X,d)=1$, as the following example shows. 
	\end{remark}
	
	\begin{example}\label{ex_sminfin1}
		Let $X\eqset \{1,2,3,4$\} with $d(i,j)\eqset 2$ unless either $i=j$, in which case $d(i,i)=0$ or either $i=1,2,3$ and $j=4$, or, symmetrically,
		$j=1,2,3$ and $i=4$. In the latter cases $d(1,4)=d(2,4)=d(3,4)=1$.
		Note that the points $1,2,3$ embed isometrically in the Euclidean plane $\R^2$ as vertices of some equilateral triangle of sidelength $a=2$. However,
		the points $1,2,3,4$ cannot embed isometrically in $\R^3$ (hence in any Hilbert space) 
		since $4$ should be otherwise the midpoint of each of the segments $[i j]$, $i, j=1,\ldots, 3$.
		Nevertheless, $s_-(X,d)=1$ and $s_+(X,d)=3$. 
	\end{example}
	
	The following easy example shows the metric space isometrically embeddable in an infinite-dimensional Hilbert space. 
	
	\begin{example}
		Let $X\eqset \N$, with $d(i,j)\eqset 1$ when $i\neq j$ (and, of course, $d(i,i)=0$). Then $S(X_N)$ for every $N$-element subset $X_N\subset X$
		has an eigenvalue $1/2$ with multiplicity $N-1$ and one eigenvalue $-(N-1)/2$. Then $s_+(S(X_N))=N-1$ and $s_-(S(X_N))=1$ implies
		$s_+(X)=+\infty$ and $s_-(X)=1$. Note that this space can be easily isometrically embedded in a Hilbert space (e.g., in the space $\ell^2$ of square summable sequences, via the map $k\mapsto e_k/\sqrt{2}$, $e_k$ standing for the $k$-th coordinate vector).
	\end{example}
	
	\begin{example}\label{ex_spmEucl1}
		Let $\R^n$ stand for the usual Euclidean $n$-dimensional space.
		Then $s_-(\R^n)=1$, and $s_+(\R^n)=n$, and hence for a metric space $(X,d)$ to be isometrically embeddable in $\R^n$, 
		it is necessary and sufficient that $s_-(T) = 0$ and $s_+(X,d) \leq n$  (clearly, in this case
		$s_+(X,d) =n$ unless $(X,d)$ can be isometrically embedded in some $\R^m$ with $m<n$).
	\end{example}
	
	
	\subsection{Embedding in pseudo-Euclidean spaces}
	
	The pseudo-Euclidean space $\R^{n,p}$ with signature $(n,p)$ is the linear space of
	vectors $\R^{n+p}$ equipped with the bilinear form 
	\[
	(u,v)\eqset -\sum_{i=1}^n u_iv_i + \sum_{j=n+1}^{n+p} u_jv_j.
	\]
	The latter defines a pseudo-distance 
	$d_{n,p}(u,v) \eqset \sqrt{(u-v,u-v)}$
	on every set $\Sigma\subset \R^{n,p}$ such that the Minkowski difference
	$\Sigma - \Sigma \eqset \{u-v : u\in\Sigma, v\in \Sigma\}$ belongs to the positive cone
	\[
	C_{n,p}\eqset \left\{v\in \R^{n,p} : (v,v) \eqset -\sum_{i=1}^n v_i^2 + \sum_{j=n+1}^{n+p} v_j^2 \ge 0\right\}.
	\]
	The following statement is valid.
	
	\begin{theorem}\label{th_pseudoeukl1}
		Let $\mu$ be a Borel probability measure with full support and finite $4$-th moment in the metric space $(X,d)$. 
		Suppose that 
		$s_-(T_\mu)=n\in \N$ and $s_+(T_\mu)=p\in \N$. 
		Let $\lambda_i\in \R$ be nonzero eigenvalues (counting multiplicities) and $u_i\in L^2(X,\mu)$ be the respective eigenfunctions
		of the operator $T_\mu \colon L^2(X,\mu)\to L^2(X,\mu)$ (defined by~\eqref{eqn:MDS opT}), normalized so that $\|u_i\|_2=1$, $i=1,\ldots, n+p$. Without loss of generality, assume $\lambda_i<0$ for $i=1,\ldots n$
		and $\lambda_i>0$ for $i=n+1,\ldots n+p$.
		Then the map $f\colon X\to \R^{n,p}$ 
		defined by
		\[
		f(x)\eqset \sum_{i = 1}^{n+p} \sqrt{|\lambda_i|} u_i(x), 
		\]
		is an isometry onto the image such that its image
		$\Sigma\eqset f(X)$ satisfies $\Sigma-\Sigma\subset C_{n,p}$.
		In particular, every finite metric space can be isometrically embedded into some pseudo-Euclidean space with the image $\Sigma$ such that $\Sigma-\Sigma$ belongs to the positive cone.
		
		Vice versa, if $\mu$ is a Borel probability measure with finite $4$-th moment (not necessarily of full support in $X$) and there is an $f\colon X\to \R^{n,p}$ isometry onto the image such that
		$\Sigma\eqset f(X)$ satisfies $\Sigma-\Sigma\subset C_{n,p}$, then $s_-(T_\mu)\leq n$ and $s_+(T_\mu)\leq p$.
	\end{theorem}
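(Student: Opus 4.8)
The plan is to verify directly that the proposed map $f$ realizes the pseudo-Euclidean distance $d_{n,p}$, imitating the computation already carried out in the proof of Proposition~\ref{prop_Hllbert1}. The key technical ingredient is the spectral identity for the Hilbert--Schmidt operator $T=T_\mu$: by Lemma~\ref{lm_lamkT2a} applied with $A\eqset T$ and $a\eqset k_T$, one has $\sum_i \lambda_i u_i(x) u_i(y) = k_T(x,y)$, a priori only in the $L^2(X\times X,\mu\otimes\mu)$ sense. Since $T$ has only finitely many nonzero eigenvalues $\lambda_1,\dots,\lambda_{n+p}$ under the hypotheses $s_-(T)=n$, $s_+(T)=p$, this sum is \emph{finite}, so the identity in fact holds pointwise for every $(x,y)$ where both sides are continuous — and working in the completion of $(X,d)$ and using continuity of $k_T$, it holds for all $(x,y)\in X\times X$. (This is the point where finiteness of $s_\pm(T)$ is essential and where one should be a little careful about the difference between $k_T$ and the integral kernel; I expect this to be the main technical obstacle, handled exactly as in the proofs of Propositions~\ref{prop:sign convergence2_sep} and~\ref{prop_Hllbert1} — either pass to the completion and use continuity, or invoke that a finite rank operator's kernel is literally the finite sum.) Then, just as in Proposition~\ref{prop_Hllbert1},
\[
\sum_{i=1}^{n+p} \lambda_i\bigl(u_i(x)-u_i(y)\bigr)^2 = k_T(x,x)+k_T(y,y)-2k_T(x,y) = d^2(x,y)
\]
for all $(x,y)$, where the last equality is~\eqref{eq_defT2}.

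Now split the sum according to the sign of $\lambda_i$: writing $f(x)=\bigl(\sqrt{|\lambda_1|}u_1(x),\dots,\sqrt{|\lambda_{n+p}|}u_{n+p}(x)\bigr)\in\R^{n,p}$ with the first $n$ coordinates corresponding to $\lambda_i<0$, the pseudo-Euclidean interval of $f(x)-f(y)$ is
\[
\bigl(f(x)-f(y),\,f(x)-f(y)\bigr) = -\sum_{i=1}^{n}|\lambda_i|\bigl(u_i(x)-u_i(y)\bigr)^2 + \sum_{i=n+1}^{n+p}\lambda_i\bigl(u_i(x)-u_i(y)\bigr)^2 = \sum_{i=1}^{n+p}\lambda_i\bigl(u_i(x)-u_i(y)\bigr)^2 = d^2(x,y).
\]
Since $d^2(x,y)\ge 0$, this simultaneously shows that $f(x)-f(y)\in C_{n,p}$, i.e.\ $\Sigma-\Sigma\subset C_{n,p}$, and that $d_{n,p}(f(x),f(y))=\sqrt{(f(x)-f(y),f(x)-f(y))}=d(x,y)$, so $f$ is an isometry onto its image. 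The statement for finite metric spaces follows because a finite space has finite $s_\pm(X,d)=s_\pm(S(X))$, hence (choosing $\mu$ uniform, which has full support and trivially finite $4$-th moment) finite $s_\pm(T_\mu)$ by Theorem~\ref{th_spm1}(iv).

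For the converse, suppose $f\colon X\to\R^{n,p}$ is an isometry onto its image with $\Sigma-\Sigma\subset C_{n,p}$, and $\mu$ has finite $4$-th moment. Write $f=(f_1,\dots,f_{n+p})$ in coordinates. The relation $d^2(x,y)=-\sum_{i\le n}(f_i(x)-f_i(y))^2+\sum_{i>n}(f_i(x)-f_i(y))^2$ expresses $-\tfrac12 d^2$ as a combination of $n$ "negative" rank-one-type kernels and $p$ "positive" ones after the double-centering in~\eqref{eq_defT1}; concretely, applying $P=P_\mu$ on both sides kills the terms linear in $f_i$ that arise from expanding the squares, and one obtains that $T_\mu = -\sum_{i=1}^n g_i\otimes g_i + \sum_{i=n+1}^{n+p} g_i\otimes g_i$ where $g_i\eqset P_\mu f_i \in L^2(X,\mu)$ (here one needs $f_i\in L^2(X,\mu)$, which follows from the finite $4$-th moment hypothesis, since $|f_i(x)-f_i(x_0)|^2\le d_{n,p}$-type bounds control $f_i$ by $d(x,x_0)^2$... more carefully, $|f_i(x)|$ is bounded by a constant plus $d(x,f^{-1}(\text{ref}))$-type quantities, so $f_i\in L^4\subset L^2$). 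A self-adjoint Hilbert--Schmidt operator of the form (negative semidefinite part of rank $\le n$) plus (positive semidefinite part of rank $\le p$) has at most $n$ negative and at most $p$ positive eigenvalues, whence $s_-(T_\mu)\le n$ and $s_+(T_\mu)\le p$. The cleanest way to see the last step is: the positive eigenspace of $T_\mu$ is a subspace on which the quadratic form $\langle T_\mu v,v\rangle = -\sum_{i\le n}\langle g_i,v\rangle^2+\sum_{i>n}\langle g_i,v\rangle^2$ is positive definite, and such a subspace cannot have dimension exceeding $p$ because on the (codimension $\le p$) space $\{v:\langle g_i,v\rangle=0 \text{ for }i>n\}$ the form is $\le 0$; symmetrically for $s_-$. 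I expect the converse direction's only subtlety to be the integrability $f_i\in L^2(X,\mu)$, which is where the $4$-th moment assumption is used.
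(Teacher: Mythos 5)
Your forward direction coincides with the paper's: both invoke Lemma~\ref{lm_lamkT2a} for the finite spectral expansion $\sum_i\lambda_i u_i(x)u_i(y)=k_T(x,y)$, promote it to a pointwise identity using continuity, and then split the sum by sign of $\lambda_i$ to read off the pseudo-Euclidean isometry. (Your aside about "passing to the completion" is not needed here: both sides are continuous on $X$ and $\mu$ has full support, so $L^2$-equality forces pointwise equality on $X\times X$ directly, which is exactly what the paper says.)

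Your converse is a genuinely different route and it has a gap you flag but do not close. The paper never touches the coordinate functions $f_i$ as elements of $L^2(X,\mu)$: it takes a finite $X_N\subset\supp\mu$, writes the \emph{finite matrix} $T_N=\Pi_N S(X_N)\Pi_N$ as $T_+-T_-$ with both $T_\pm$ Gram matrices of rank $\le p$ resp.\ $\le n$, bounds $s_\pm(T_N)$ via Lemma~\ref{lem:sign_comparison}(i), and then passes to $T_\mu$ through Proposition~\ref{prop_sigTmu1}. Your approach instead decomposes the operator $T_\mu$ itself as $-\sum_{i\le n}g_i\otimes g_i+\sum_{i>n}g_i\otimes g_i$ with $g_i=P_\mu f_i$, which requires $f_i\in L^2(X,\mu)$ so that the rank-one operators are even defined. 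Your justification for this — that "$d_{n,p}$-type bounds control $f_i$ by $d(x,x_0)^2$" — is false: the indefinite form does \emph{not} control the Euclidean size of the coordinates. From $|P^+_p(f(x)-f(x_0))|^2-|P^-_n(f(x)-f(x_0))|^2=d^2(x,x_0)$ together with $|P^-_n(f(x)-f(x_0))|\le|P^+_p(f(x)-f(x_0))|$ one gets a \emph{lower} bound $|P^+_p(f(x)-f(x_0))|\ge d(x,x_0)$, but no upper bound on either projection: for a fixed pseudo-distance the Euclidean magnitudes can be arbitrarily large (e.g.\ in $\R^{1,1}$, the points $(0,0)$ and $(a,\sqrt{a^2+1})$ are at pseudo-distance $1$ for every $a$). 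So the finite-$4$th-moment hypothesis on $\mu$ does not by itself yield $f_i\in L^4$ or $L^2$, and the operator decomposition is not legitimate without an additional argument. The fix is exactly what the paper does: avoid $f_i$ entirely and work with finite matrices, so that the integrability question never arises. (Your final dimension-counting argument — a positive-definite subspace of the form must intersect trivially the codimension-$\le p$ space where $\langle g_i,v\rangle=0$ for $i>n$ — is sound, but only once the $g_i$ are known to lie in $L^2$.)

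One further minor slip: your explanation for the "every finite metric space" corollary invokes Theorem~\ref{th_spm1}(iv) to get finiteness of $s_\pm(T_\mu)$, which is fine, but the simpler observation is that for a finite space $T_\mu$ is a finite symmetric matrix, so $s_\pm(T_\mu)$ are trivially finite and the main part of the theorem applies directly.
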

	
	\begin{proof}
		By Lemma~\ref{lm_lamkT2a} (with $A\eqset T$, $a\eqset k_T$), one has
		\begin{equation}\label{eq_lamkT2}
			\sum_{i = 1}^\nu \lambda_i u_i(x) u_i(y) = k_T(x,y),
		\end{equation}
		where $\nu\eqset n+p$, with the equality in~\eqref{eq_lamkT2} formally valid only in the sense of $L^2(X\times X,\mu\otimes\mu)$. 
		Since the functions on both sides of this equality are continuous, then~\eqref{eq_lamkT2} is also valid pointwise, i.e.,\ for all $(x,y) \in X\times X$.
		In particular, we also get
		\begin{equation}\label{eq_lamkT1}
			\sum_{i = 1}^\nu \lambda_i u_i^2(x) = k_T(x,x)
		\end{equation}
		for all $x\in X$. 
		
			
		From~\eqref{eq_lamkT1} and~\eqref{eq_lamkT2}, we get
		\begin{align*}
			\sum_{i = 1}^\nu \lambda_i \left(u_i(x) - u_i(y)\right)^2 
			&= \sum_{i = 1}^\nu \lambda_i u_i^2(x) + \sum_{i = 1}^\nu \lambda_i u_i^2(y) - 2 \sum_{i = 1}^\nu \lambda_i u_i(x) u_i(y) \\
			&= k_T(x, x) + k_T(y, y) - 2 k_T(x, y)
		\end{align*}
		for all $(x,y)$. The identity~\eqref{eq_defT2}
		gives then
		\begin{equation}\label{eq_lamkT3}
			\begin{aligned}
				\sum_{i = 1}^\nu \lambda_i \left(u_i(x) - u_i(y)\right)^2 =d^2(x, y)
			\end{aligned}
		\end{equation}
		for all $(x,y)$. 
		We observe now that one can view $f$ as a map between $X$ and $\R^{n,p}$ with~\eqref{eq_lamkT3} reading as 
		\begin{align*}
			d_{n,p}(f(x),f(y))^2 & =
			-\sum_{i=1}^n |\lambda_i| \left(u_i(x) - u_i(y)\right)^2 + \sum_{j=n+1}^{n+p} |\lambda_j| \left(u_j(x) - u_j(y)\right)^2 \\
			& = \sum_{i = 1}^\nu \lambda_i \left(u_i(x) - u_i(y)\right)^2 =
			d^2(x,y).
		\end{align*}
		Hence, $f$ is an isometry onto the image $\Sigma\eqset f(X)$ with $\Sigma-\Sigma\subset C_{n,p}$,
		concluding the proof of the first part.
		
		For the converse statement, we consider $X_N = \{x_i\}_{i=1}^N \subset \supp \mu$. 
		Identifying $X_N$ with $f(X_N)$, we may assume that $X_N\subset \R^{n,p}$ with
		$X_N - X_N\subset C_{n,p}$. Then $T_N \eqset \Pi_N S(X_N) \Pi_N$ is the matrix
		\[
		T_{ij} = ( \bar x_i, \bar x_j )_{n,p}, \quad\mbox{where } \bar x_i \eqset x_i- \bar{x}, \quad
		\bar{x}\eqset \frac{1}{N} \sum_{i=1}^N x_i .
		\]
		Thus, one can view $T_N$ as the difference between two matrices, $T_N = T_+ - T_-$, where
		\[
		(T_-)_{ij} = (P_{n}^-\bar x_i) \cdot (P_{n}^-\bar x_j), \qquad (T_+)_{ij} = (P_{p}^+\bar x_i) \cdot (P_{p}^+\bar x_j),
		\]
		$P_{n}^-$ (resp.\ $P_{p}^+$) standing for the projections from $\R^{n+p}$ to $\R^n$ identified with the subspace of $\R^{n+p}$ with all coordinates zero except the first $n$ ones (resp.\ to $\R^p$ identified with the subspace of $\R^{n+p}$ with all coordinates zero except the last $p$ ones). Also, $u\cdot v$ stands for the usual dot product between two vectors in $\R^{n+p}$. 
		Both $T_-$ and $T_+$ are clearly positive semidefinite with $s_+(T_-)\leq n$, $s_+(T_+)\leq p$, and $s_-(T_-)=s_-(T_+) = 0$. 
		From Lemma~\ref{lem:sign_comparison}(i), we get that
		\[
		s_+(T_N) \leq s_+(T_+) + s_+(-T_-) = s_+(T_+) + s_-(T_-) \le p.
		\]
		A completely symmetric reasoning yields $s_-(T_N)\leq n$.
		Then Proposition~\ref{prop_sigTmu1} applied to the space $(\supp \mu, d)$ yields the desired claim
		\[
		s_+(T_\mu) \le p, \quad s_-(T_\mu) \le n ,
		\]
		since $L^2(X, \mu)$ is isomorphic to $L^2(\supp \mu, \mu)$.
	\end{proof}
		
	Note that not every subset $\Sigma$ of a pseudo-Euclidean space $\R^{n,p}$ is an image of an isometric embedding of a metric space.
	In particular, the condition $\Sigma-\Sigma\in C_{n,p}$ is a strong restriction, see the example below.
	
	\begin{example}
		If $n=1$, and $\Sigma\subset \R^{n,p}$ satisfies 
		$\Sigma-\Sigma\subset C_{n,p}$, then the intersection of $\Sigma$ with every line parallel to $x_1$ axis must be either empty or a singleton. 
		In other words, $\Sigma$ belongs to a hypersurface which, if seen in $\R^{n+p}=\R^{p+1}$, is a graph $G$ of some function: $x_1 = g(x_2, \ldots, x_{p+1})$. 
		Clearly, $g\colon \R^p\to \R$ is a $1$-Lipschitz function, since $G - x \subset C_{n,p}$ for every $x\in G$.
	\end{example}

	\subsection{On feasible signatures}
	
	The following result shows that for a generic finite metric space there exists an arbitrarily small perturbation of the distance which preserves $s_+$ and makes $s_-$ maximum possible.
	
	\begin{proposition}\label{prop_negsign1}
		Let $(X, d)$ be a finite metric space with $X = \{x_1, \dots, x_N\}$ such that the triangle inequality is strict for any distinct triplet from $X$. Consider the matrix $T\eqset \Pi_N S(X) \Pi_N$. 
		Then for any sufficiently small $\varepsilon > 0$, there is a distance $d_\varepsilon$ over $X$ such that 
		\[
		\max_{i, j} |d(x_i, x_j) - d_\varepsilon(x_i, x_j)| \le \varepsilon
		\]
		and the corresponding matrix $T_\varepsilon$ has signature 
\[		
s_+(T_\varepsilon) = s_+(T), \quad s_-(T_\varepsilon) = N - 1 - s_+(T).
\]	
\end{proposition}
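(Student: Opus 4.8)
The plan is to produce $d_\varepsilon$ by \emph{uniformly shrinking all squared distances}: set $d_\varepsilon^2(x_i,x_j)\eqset d^2(x_i,x_j)-t$ for $i\ne j$ (and $d_\varepsilon(x_i,x_i)\eqset 0$), where $t=t(\varepsilon)>0$ is a small parameter to be fixed at the end. The point of this choice is the following clean algebraic identity. Writing $J\eqset \vone_N\vone_N^T$, the hollow all-ones matrix is $J-\Id_N$, and the new Gram-type matrix is $S(X_\varepsilon)=S(X)+\tfrac t2\,(J-\Id_N)$. Since $\Pi_N\vone_N=0$ we get $\Pi_N J\Pi_N=0$ and $\Pi_N\Id_N\Pi_N=\Pi_N^2=\Pi_N$, so that
\[
T_\varepsilon=\Pi_N S(X_\varepsilon)\Pi_N=T-\tfrac t2\,\Pi_N .
\]
Thus the perturbation is not just small, it is the simplest conceivable one, and this is what drives the whole argument.

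First I would check that $d_\varepsilon$ is a genuine distance on $X$ for all sufficiently small $t>0$. Positivity and the identity of indiscernibles hold as soon as $t<\min_{i\ne j}d^2(x_i,x_j)$, and symmetry is clear; the triangle inequality for a distinct triple $x_i,x_j,x_k$ reads $\sqrt{d^2_{ik}-t}\le\sqrt{d^2_{ij}-t}+\sqrt{d^2_{jk}-t}$, which holds \emph{strictly} at $t=0$ by hypothesis, hence — by continuity of the square root and finiteness of the set of triples — for all $t$ below a threshold $t_0>0$ depending only on $(X,d)$. This is precisely where the strictness assumption is used; without it, shrinking squared distances destroys the triangle inequality already for the three collinear points $0,1,2\in\R$. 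Finally, $|d(x_i,x_j)-d_\varepsilon(x_i,x_j)|=t/(d_{ij}+\sqrt{d_{ij}^2-t})\le t/\min_{i\ne j}d(x_i,x_j)$, so the prescribed $\varepsilon$-closeness follows by taking $t\le\varepsilon\min_{i\ne j}d(x_i,x_j)$.

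For the signature, note that $T$ (being of the form $\Pi_N S\Pi_N$) commutes with $\Pi_N$ and annihilates $\vone_N$, so $\R^N=V\oplus\Span\{\vone_N\}$ with $V\eqset\vone_N^\perp$ is an orthogonal decomposition into invariant subspaces for both $T$ and $T_\varepsilon$; on $\Span\{\vone_N\}$ both vanish, and on $V$ one has $\Pi_N=\Id$, hence $T_\varepsilon|_V=T|_V-\tfrac t2\,\Id_V$. Consequently the spectrum of $T$ is that of $T|_V$ together with one extra $0$ coming from $\vone_N$, so $s_\pm(T)=s_\pm(T|_V)$ and $s_0(T|_V)=s_0(T)-1$. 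If in addition $t/2$ is chosen strictly below every strictly positive eigenvalue of $T$ (a vacuous requirement if there are none), then subtracting $t/2$ from each eigenvalue of $T|_V$ keeps the $s_+(T)$ positive ones positive, pushes the $s_0(T)-1$ zero ones strictly negative, and keeps the $s_-(T)$ negative ones negative; the extra $0$ on $\vone_N$ survives. Hence $s_0(T_\varepsilon)=1$, $s_+(T_\varepsilon)=s_+(T)$, and $s_-(T_\varepsilon)=s_-(T)+s_0(T)-1=N-1-s_+(T)$, as claimed.

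To assemble the proof, fix $\varepsilon_0>0$ small enough in terms of $t_0$, of $\min_{i\ne j}d(x_i,x_j)$, and of the smallest positive eigenvalue of $T$, and for $0<\varepsilon<\varepsilon_0$ set $t\eqset\varepsilon\min_{i\ne j}d(x_i,x_j)$; the three steps above then deliver a metric $d_\varepsilon$ with all the required properties. I do not expect a serious obstacle here: once the uniform-shrink perturbation is guessed, the only genuinely delicate point is the triangle-inequality check, which is exactly where the strict-triangle-inequality hypothesis must be invoked, while the spectral part is immediate from the explicit identity $T_\varepsilon=T-\tfrac t2\Pi_N$ — the only bookkeeping subtlety being to keep the zero eigenvalue carried by $\vone_N$ (living outside $V$, hence untouched) separate from the zero eigenvalues of $T|_V$ that are being eliminated.
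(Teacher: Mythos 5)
Your proof is correct, and it is in fact a cleaner specialization of the paper's argument. The paper perturbs the squared distances by $\varepsilon\|v_i-v_j\|^2$ for an \emph{arbitrary} linearly independent family $v_1,\dots,v_N$, producing $T_\varepsilon=T-\varepsilon G$ with $G$ a generic rank-$(N-1)$ Gram matrix; the signature count then goes through Lemma~\ref{lem:sign_comparison}(i)--(ii) (Weyl-type bounds plus stability of $s_+$ under small perturbations), together with the observation that $\ker T_\varepsilon\supset\Span\{\vone_N\}$ forces $s_-(T_\varepsilon)\le N-1-s_+(T_\varepsilon)$. Your choice $d_\varepsilon^2=d^2-t$ off the diagonal corresponds to taking $v_i$ orthonormal (so $\|v_i-v_j\|^2\equiv 2$ and $G=\Pi_N$), and this turns the perturbation into the exact identity $T_\varepsilon=T-\tfrac t2\Pi_N$, which you then diagonalize by hand on $\vone_N^\perp$: the spectrum shifts rigidly by $-t/2$ on $V$ and is untouched on $\Span\{\vone_N\}$. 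This removes any appeal to Lemma~\ref{lem:sign_comparison} and makes the count of positive/zero/negative eigenvalues immediate. What the paper's more general perturbation buys is flexibility (any Gram matrix works, which could be useful in variants where one wants to control other spectral quantities); what yours buys is a self-contained, fully explicit spectral computation. Both correctly invoke the strict-triangle-inequality hypothesis at the same place and for the same reason — to keep $d_\varepsilon$ a metric after the squared distances are shrunk — and your bookkeeping of the extra zero eigenvalue carried by $\vone_N$ is right.
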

	
	\begin{proof}
		Fix linearly independent vectors $v_1, \dots, v_N \in \R^N$.
		For any $\varepsilon > 0$ define 
		\[
		g_\varepsilon(x_i, x_j) = d^2(x_i, x_j) - \varepsilon \|v_i - v_j\|^2.
		\]
		Note $g_\varepsilon \to d^2$ uniformly as $\varepsilon \to 0^+$. For sufficiently $\varepsilon>0$, we have $d_\varepsilon \eqset \sqrt{g_\varepsilon}\geq 0$.
		Moreover, since for any distinct $i, j, k$, the strict triangle inequality 
		\[
		d(x_i, x_j) < d(x_i, x_k) + d(x_k, x_j)
		\]
		holds, then for all sufficiently small $\varepsilon$, one has the triangle inequality
		\[
		d_\varepsilon(x_i, x_j) \le d_\varepsilon(x_i, x_k) + d_\varepsilon(x_k, x_j).
		\]
		Thus, $d_\varepsilon$ is a distance over $X$ for small $\varepsilon>0$. We set then $X_\varepsilon \eqset (X,d_\varepsilon))$.
		
		Now, it is easy to see that the matrix corresponding to $(X,d_{\varepsilon})$ has the form
		\[
		T_\varepsilon\eqset \Pi_N S(X_\varepsilon)\Pi_N = T - \varepsilon G ,
		\]
		where $G \eqset \begin{pmatrix} \left(v_i - \bar{v}, v_j - \bar{v} \right) \end{pmatrix}_{i,j=1}^N$ with $\bar{v} \eqset \frac{1}{N} \sum_{i=1}^N v_i$.
		In particular, $G$ is a Gram matrix with $s_+(G) = N - 1$, $s_-(G) = 0$, thus due to Lemma~\ref{lem:sign_comparison}(i) one has
		\begin{align*}
			s_+(T_\varepsilon) &\le s_+(T) + s_+(-\varepsilon G) = s_+(T), \\
			s_-(T_\varepsilon) &\ge s_-(- \varepsilon G) - s_-(-T) = N - 1 - s_+(T).
		\end{align*}
		Moreover, for small enough $\varepsilon$, one has $s_+(T_\varepsilon) = s_+(T)$ by Lemma~\ref{lem:sign_comparison}(ii). 
		Since $\ker T$ is non-trivial, we have $s_-(T_\varepsilon) \le N - 1 - s_+(T_\varepsilon) = N - 1 - s_+(T)$. Thus, we get the equality for $s_-(T_\varepsilon)$.
		The claim follows.
	\end{proof}
	
	\begin{corollary}\label{co_sign_np1}
		For any $p\geq 2$ and $n \ge 1$, there is an $(n + p + 1)$-point metric space $(X,d)$ such that the corresponding matrix $T$ has signature $s_+(T) = p$, $s_-(T) = n$.
	\end{corollary}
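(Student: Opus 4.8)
The plan is to obtain the required space as a small perturbation of a Euclidean configuration, using Proposition~\ref{prop_negsign1}. First I would set $N \eqset n+p+1$ and choose $N$ points $x_1,\dots,x_N$ in general position in $\R^p$, for instance the moment-curve points $x_i \eqset (t_i, t_i^2, \dots, t_i^p)$ for distinct reals $t_1 < \dots < t_N$. A Vandermonde computation shows that every $(p+1)$-subset of these points is affinely independent; in particular no three of them are collinear, so for the metric $d$ that $X \eqset \{x_1,\dots,x_N\}$ inherits from $\R^p$ the triangle inequality is strict for every triplet of distinct points, and the affine span of $X$ is all of $\R^p$ (this uses $N \ge p+1$, which holds since $n\ge 1$; and $N\ge 3$ so that triplets exist).

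Next I would identify the signature of $T \eqset \Pi_N S(X) \Pi_N$ for this Euclidean $X$. Exactly as in the proof of Theorem~\ref{th_pseudoeukl1} (the case with no negative directions), $T$ is the Gram matrix of the centered points, $T_{ij} = (x_i - \bar x)\cdot(x_j - \bar x)$ with $\bar x \eqset \frac1N\sum_i x_i$. Hence $T$ is positive semidefinite, so $s_-(T)=0$, and $s_+(T)$ equals the dimension of the span of $\{x_i-\bar x\}_{i=1}^N$, i.e.\ the affine dimension of $X$, which is $p$ by the previous step. Thus $s_+(T)=p$ and $s_-(T)=0$.

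Finally I would apply Proposition~\ref{prop_negsign1} to the finite metric space $(X,d)$: all triplet triangle inequalities being strict, for every sufficiently small $\varepsilon>0$ there is a distance $d_\varepsilon$ on $X$ whose double-centered matrix $T_\varepsilon$ satisfies $s_+(T_\varepsilon)=s_+(T)=p$ and $s_-(T_\varepsilon)=N-1-s_+(T)=(n+p+1)-1-p=n$. The $(n+p+1)$-point space $(X,d_\varepsilon)$ is then the one claimed in the corollary.

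The only mildly delicate point is checking the general-position claim for the chosen configuration — that every $(p+1)$-subset of the moment-curve points is affinely independent, equivalently that the associated Vandermonde-type determinants do not vanish — which is routine; everything else is immediate from Proposition~\ref{prop_negsign1} and the Gram matrix description of $T$. It is also worth noting that the hypothesis $p\ge 2$ is precisely what allows a Euclidean model with all triangle inequalities strict, since in $\R^1$ any three points are collinear.
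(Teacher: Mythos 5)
Your proof is correct and follows essentially the same route as the paper: apply Proposition~\ref{prop_negsign1} to $N = n+p+1$ Euclidean points in general position in $\R^p$ with strict triangle inequalities, for which $T$ is the positive semidefinite Gram matrix of the centered configuration and hence has $s_+(T)=p$, $s_-(T)=0$. The paper's proof chooses the points on the unit sphere in $\R^p$ (where strict convexity rules out collinear triples), whereas you use the moment curve; both are valid generic choices, so the argument is the same in substance.
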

	
	\begin{proof}
		It is enough to apply the above Proposition~\ref{prop_negsign1} to $N\eqset n + p + 1$ distinct points in a general position (i.e.\ spanning $\R^p$) on the unit sphere in $\R^p$, endowed with the Euclidean distance.
	\end{proof}
	
	\begin{remark}
		Under the conditions of the above Corollary~\ref{co_sign_np1}, one clearly has
		$s_+(X,d)\in \{p, p+1\}$, $s_-(X,d)\in \{n, n+1\}$ in view of Theorem~\ref{th_spm1}(iv). Thus, in particular, this corollary gives a construction of hollow symmetric nonnegative (HSN) matrices (in this particular case even stronger, squared distance matrices)
		with arbitrarily large number of positive eigenvalues. 
		This provides an alternative proof to the main theorem of~\cite{Charles2013} (the latter seems to be much more complicated and gives just HSN matrices, not necessarily related to any distances).
	\end{remark}

	We are also able to provide the following example of an infinite metric space $(X,d)$ with $s_-(X,d)=+\infty$ and $s_+(X,d)$ finite (arbitrary up to the error of $1$).

	\begin{example}
		Let $p\in \N$ be fixed. In the following construction we assume the pseudo-Euclidean spaces $\R^{n,p}$ with different $n\in \N$ to be
		hierarchically embedded one into another, in such a way that a vector $x\in \R^{n,p}$ is identified
		with the vector $y\in \R^{m,p}$, $m>n$, defined by
		\begin{align*}
			y_i:=x_i, & \quad i=1,\ldots, n,\\
			y_i:=0, & \quad i=n+1,\ldots, m,\\
			y_{m+j}:= x_{n+j}, & \quad j=1,\ldots, p.
		\end{align*}
		This gives a natural embedding of $\R^{n,p}$ into $\R^{m,p}$. 
		In the same way, we identify $\R^n$ (resp.\ $\R^p$) with the subspace of $\R^{n,p}$ with last $p$ coordinates (resp.\ first $n$ coordinates) zero. 
		As in the proof of Theorem~\ref{th_pseudoeukl1}, we denote
		by $P_{n}^-$ (resp.\ $P_{p}^+$) the projections from $\R^{n+p}$ to $\R^n$ (resp.\ $\R^p$), both identified with  subspaces of $\R^{n+p}$,
		defined by $(P_{n}^- z)_i\eqset z_i$, and $(P_{n}^- z)_j\eqset 0$ (resp.\ $(P_{p}^+ z)_i\eqset 0$, $(P_{p}^+ z)_j\eqset z_j$), where $i=1,\ldots, n$, $j=n+1,\ldots, n+p$).
		
		We are going to construct inductively a sequence of increasing sets 
		\[X_N = \{z^1,\ldots, z^N\} \subset \R^{N-1,p},\quad  N \ge p + 1,\] 
		with special properties.
		For brevity, we denote 
		\begin{align*}
			S_N \eqset S(X_N), \quad 
			T_N \eqset \Pi_N S_N \Pi_N .
		\end{align*}
		As in the proof of Theorem~\ref{th_pseudoeukl1}, one has
		\begin{align*}
			T_N & = T_N^+ - T_N^-, \quad\mbox{where}\\
			(T_N^-)_{ij} &\eqset (P_{N-1}^- \bar z^i)\cdot (P_{N-1}^- \bar z^j), \\
			(T_N^+)_{ij} &\eqset (P_p^+ \bar z^i)\cdot (P_p^+ \bar z^j), \quad i,j=1, \ldots N,\\
			\bar z^i &\eqset z^i - \frac{1}{N} \sum_{j=1}^{N} z^j,
		\end{align*}
		where the dot product is the Euclidean one. 
		Clearly, $T_N^\pm$ are both positive semidefinite.
		We will construct $(X_N)_{N=p+1}^\infty$ in such a way that
		\begin{itemize}
			\item[(i)] both $P_{N-1}^- X_N$ and $P_p^+ X_N$ are sets of points in a general position in $\R^{N-1}$ and $\R^p$ respectively (i.e.\ not contained in any hyperplane in these spaces),	
			\item[(ii)] $d_{N-1,p}$ is a distance over $X_N$ and for every triple of distinct points in $X_N$, the triangle inequality for $d_{N-1,p}$ is strict,
			\item[(iii)] $P_p^+ X_N \subset S$, where $S$ is the unit sphere in $\R^p$,
			\item[(iv)] $u \cdot P_p^+ (z^k - z^1) < d_{N-1,p}(z^k, z^1) |u|$ for all $k = 2, \dots, N$ and non-zero $u \in T_{y^1}(S)$, where $T_{y^1}(S)$ is the tangent space to $S$ at point $y^1:=P_p^+ z^1$, and $|\cdot|$ stands for the Euclidean norm in $\R^p$.
		\end{itemize}
		In view of~(i) we will have then
		\[
		s_+(T_N^-) = N - 1, \quad s_+(T_N^+) = p, \quad s_-(T_N^-) = s_-(T_N^+) = 0.
		\]
		Thus from $T_N = T_N^+ - T_N^-$ with the help of Lemma~\ref{lem:sign_comparison}(i) we get
		\begin{equation}\label{eq_Tpn1}
		\begin{aligned}
			s_-(T_N) &\ge s_-(- T_N^-) - s_- (- T_N^+)\\
			&= s_+(T_N^-) - s_+ (T_N^+) = N - p - 1,
		\end{aligned}
		\end{equation}
		as well as $s_+(T_N) \le s_+(T_N^+) + s_-(T_N^-) = p$.

		We start with $N \eqset p + 1$: take $p+1$ points $\{y^1,\ldots, y^{p+1}\}$ in a general position on the unit sphere $S \subset \R^p$.
		Now, let $\{x^1,\ldots, x^{p+1}\} \subset \R^p$ be in a general position and with the norms $|x^k|$, $k=1,\ldots, p+1$, so small that for $d_{p,p}$ satisfies strict triangle inequalities over the set
		\[
		X_{p+1} \eqset \{z^1,\ldots, z^{p+1}\}, \quad z^k \eqset (x^k, y^k) \in \R^{p,p} ,
		\]
		the property~(iv) holds for $N:=p+1$ 
		and $s_+(T_{p+1}) = s_+(T_{p+1}^+) = p$ (where the last equality is due to Example~\ref{ex_spmEucl1}).
		Then, in particular,
		\begin{equation}\label{eq:Sp+1}
			s_+ (S(X_{p+1})) = p 
		\end{equation}
		in view of 
		Theorem~\ref{th_spm1}(iv) and the fact that $s_- (S(X_{p+1})) \ge 1$. 
		The possibility to satisfy~(iv) follows from the strict convexity of the unit ball in $\R^p$ which implies
		\[
		u \cdot (y^k - y^1) < |y^k - y^1| \cdot |u|
		\]
		for all $k\neq 1$ and nonzero $u\in T_{y^1}(S)$,  and 
		\[
		d_{p,p}(z^k, z^1) \to |y^k - y^1| \quad \mbox{ as $x_k \to 0$, $x_1\to 0$}.
		\]

		Once $X_N = \{z^1,\ldots, z^N\} \subset \R^{N-1,p}$ is constructed, we denote 
		\[
		y^k\eqset P_p^+ z^k\in S, \quad 
		x^k\eqset P_{N-1}^- z^k.
		\]	
		We will choose $z^{N+1} = (x^{N+1}, y^{N+1})$ such that $y^{N+1} \in S$ and $x^{N+1} \in \R^N$ has the form
		\begin{align*}
			(x^{N+1})_i &\eqset (x^1)_i, \quad i=1, \ldots, N-1,\\
			(x^{N+1})_N &\eqset \varepsilon_N ,
		\end{align*}
		where $\varepsilon_N > 0$.
		First, note that 
		\[
		\mathrm{dist}\,\left(\frac{y - y^1}{|y - y^1|}, T_{y^1}(S)\right) \to 0 \quad \mbox{as $y \to y^1$},
		\] 
			where $\mathrm{dist}$ stands for the distance in $\R^p$ between a point and a set. 
		Then, due to~(iv), one can choose $y^{N+1}$ close enough to $y^1$, so that for an auxiliary point $\tilde{z}^{N+1} \eqset (x^1, y^{N+1})$ one has
		\begin{align*}
	(\tilde z^{N+1} - z^1, z^1 - z^k)_{N,p} &= (y^{N+1} - y^1) \cdot (y^1 - y^k) \\
	& < d_{N,p}(z^k, z^1) |y^{N+1} - y^1|, \\
	(z^1 - \tilde{z}^{N+1}, \tilde{z}^{N+1} - z^k)_{N,p} &= (y^1 - y^{N+1}) \cdot (y^{N+1} - y^k) \\
	& < d_{N,p}(z^k, \tilde{z}^{N+1}) |y^{N+1} - y^1|,
\end{align*}
and
		\begin{align*}
			|y^{N+1} - y^1|  &  < d_{N,p}(z^1, z^k)  , 
		\end{align*}
		hence
		\begin{align*}
			d_{N,p}(\tilde{z}^{N+1}, z^1) &= |y^{N+1} - y^1| < d_{N,p}(z^1, z^k) + d_{N,p}(z^k, \tilde{z}^{N+1}) , 
		\end{align*}
		for all $k = 2, \dots, N$, with $a\cdot b$ standing for the Euclidean dot product in $\R^p$.
		Thus, by Lemma~\ref{lem:strict_triangle},
		\begin{align*}
			&d_{N,p}(\tilde{z}^{N+1}, z^k) < d_{N,p}(\tilde{z}^{N+1}, z^1) + d_{N,p}(z^1, z^k), \\
			&d_{N,p}(\tilde{z}^1, z^k) < d_{N,p}(z^1, \tilde{z}^{N+1}) + d_{N,p}(\tilde{z}^{N+1}, z^k).
		\end{align*}
		for all $k = 2, \dots, N$, and hence $d_{N,p}$ satisfies the strict triangle inequality on the set $X_N \cup \{\tilde{z}^{N+1}\}$.
		Now, we can choose $\varepsilon_N$ small enough, such that $d_{N,p}$ is a distance over $X_{N+1} \eqset X_N \cup \{z^{N+1}\}$ satisfying the strict triangle inequality, and 
		\[
		u \cdot (y^{N+1} - y^1) < d_{N,p}(z^{N+1}, z^1) |u|
		\]
		for all nonzero $u \in T_{y_1} S$. 
		The latter follows from the fact that
		\[
		u \cdot (y^{N+1} - y^1) < |y^{N+1} - y^1| \cdot |u|
		\]
		due to the strict convexity of the unit ball in $\R^p$, and 
		\[
		d_{N,p}(z^{N+1}, z^1) = \sqrt{|y^{N+1} - y^1|^2 - \varepsilon_N^2} \to |y^{N+1} - y^1| \quad \mbox{ as $\varepsilon_N \to 0$}.
		\]
		Therefore, $X_{N+1}$ satisfies~(i)-(iv).
		
		To conclude, let $X \eqset \bigcup_{N = p+1}^\infty X_N$, and equip this set with the distance coinciding with $d_{N,p}$ over each $X_N$.
		Then 		\[
		s_\pm(X, d) = \lim_n s_\pm(S_N) .
		\]
		by Theorem~\ref{th_spm1}(i).
		Clearly, in view of Theorem~\ref{th_spm1}(iv) and~\eqref{eq_Tpn1}, one has 
		\[
		s_-(S_N)\geq N - p - 1,
		\]
		and hence $s_-(X,d) = +\infty$.
		On the other hand, by Theorem~\ref{th_pseudoeukl1} combined with Theorem~\ref{th_spm1}(iv), we get $s_+(X,d) \le p+1$.
		Finally, $s_+(X,d) \geq s_+(S(X_{p+1})) = p$ by~\eqref{eq:Sp+1}, thus
		\[
		s_+(X,d)\in \{p,p+1\} \quad \mbox{and} \quad s_-(X,d) = +\infty,
		\]
		concluding the example.
	\end{example}

	The following example shows another construction of an infinite metric space $(X,d)$ with $s_+(X,d)=+\infty$ and $s_-(X,d)$ finite but different from $1$. 
	
	\begin{example}\label{ex_sminfin2}
		Let $X\eqset \N$, and define the distance
		\begin{align*}
			d(i,j)\eqset \begin{cases}
				1,\ i<4,\ j=4\quad \mbox{or }i=4,\ j<4,\\
				0,\ i=j,\\
				2,\ \mbox{otherwise}.
			\end{cases}
		\end{align*}
		Note that the points $1,2,3,4$ form the same tripod from Example~~\ref{ex_sminfin1}, which cannot be embedded in any Hilbert space. 
		The subset $X_N\eqset \{1,\ldots, N\}$ has the $N\times N$ squared distance matrix
		\[
		B_N\eqset \left(
		\begin{tabular}{lllllll}
			0& 4& 4 &1 &4 &\ldots &4\\
			4& 0& 4 &1 &4 &\ldots &4\\
			4& 4& 0 &1 &4 &\ldots &4\\
			1& 1& 1 &0 &4 &\ldots &4\\
			4& 4& 4 &4 &0 &\ldots &4\\
			\ldots &\ldots &\ldots &\ldots &\ldots &\ldots &\ldots\\
			4& 4& 4 &4 &4 &\ldots &0\\
		\end{tabular}
		\right)
		\]
		and by the Haynsworth inertia additivity formula \cite[theorem~1]{haynsworth1968determination} its signature satisfies
		\[
		(s_-,s_0,s_+)(B_N) = (s_-,s_0,s_+)(B_4) + (s_-,s_0,s_+)(B_N/B_4),
		\]
		where $B_N/B_4$ stands for the Schur complement of $B_4$ in $B_N$. The explicit calculation shows
		$(s_-,s_0,s_+)(B_4)=(3,0,1)$ and
		\[
		B_N/B_4\eqset 
		\frac{4}{3}\left(
		\begin{tabular}{lllllll}
			 8& 11& 11&  11&\ldots &11\\
			11&  8& 11&  11&\ldots &11\\
			11& 11&  8&  11&\ldots &11\\
			\ldots &\ldots &\ldots &\ldots &\ldots &\ldots\\
			11& 11& 11& 11 &\ldots &8\\
		\end{tabular}
		\right).
		\]
		For $N\eqset 4+k$ the latter $k\times k$ matrix has one eigenvalue $-4$ of multiplicity $k-1$ and one strictly positive simple eigenvalue (the Perron--Frobenius one), that is, $(s_-,s_0,s_+)(B_{4+k}/B_4)=(k-1,0,1)$. 
		Thus, $(s_-,s_0,s_+)(B_{4+k})=(k+2,0,2)$ for $k \ge 1$, and therefore, $s_-(X,d)=2$ and $s_+(X,d)=+\infty$. 
	\end{example}
	
	Now, we extend the previous Example~\ref{ex_sminfin2} to show that $s_-$ can be arbitrarily large.

	\begin{example}\label{ex_sminfin3}
		Let $(X_i, d_i)$, $i = 1, \dots, m$, be a finite metric space with the squared distance matrix $D_i \eqset (d_i^2(x_j, x_k))_{j,k}$.
		Assume $\diam(X_i) \le 2 h$ for some $h > 0$.
		
		Denote for the sake of brevity $N_i \eqset \# X_i$, 
		$S_i \eqset S(X_i)$ and 
		\[
		R_i \eqset \frac{h^2}{2} \vone_{N_i} \vone^T_{N_i} + S_i = \frac{1}{2} (h^2 \vone_{N_i} \vone^T_{N_i} - D_i).
		\]
		Now we define a metric space $(X, d)$ with $X \eqset \bigsqcup_{i=1}^m X_i$ and
		\[
		d(x, y) \eqset \begin{cases}
			d_i(x, y), & x, y \in X_i, \\
			h, & x\neq y\ \text{otherwise.}
		\end{cases}
		\]
		The respective squared distance matrix is given by
		\[
		D \eqset \begin{pmatrix}
			D_1 & h^2 & \dots & h^2 \\
			h^2 & D_2 & \dots & h^2 \\
			\vdots & \vdots & \ddots & \vdots \\
			h^2 & h^2 & \dots & D_m
		\end{pmatrix}.
		\]
		Let also
		\[
		\begin{array}{rl}
			S &\eqset S(X)=- \frac{1}{2} D,\\
			R &\eqset \frac{h^2}{2}\vone_N \vone_N^T + S = \frac{1}{2} (h^2 \vone_N \vone_N^T - D) = \diag(R_1, \dots, R_m),
		\end{array}
		\]
		where $N\eqset \sum_{i=1}^m N_i$.
		Note that $s_\pm(R) = \sum_{i=1}^m s_\pm(R_i)$.
		Further, since $\frac{h^2}{2} \vone_N \vone_N^T$ is positive semidefinite rank one matrix, 
		by Lemma~\ref{lem:sign_comparison}, we get
		\[
		s_+(S) \le s_+(R) \le s_+(S) + 1, \quad s_-(S) - 1 \le s_-(R) \le s_-(S).
		\]
		
		Let us consider a specific example, where all $X_i$ for $i = 1, \dots, m-1$, are isometric with 
		\[
		D_i = \begin{pmatrix}
			0 & 4 & 4 & 1 \\
			4 & 0 & 4 & 1 \\
			4 & 4 & 0 & 1 \\
			1 & 1 & 1 & 0
		\end{pmatrix}
		\]
		as in the previous example,
		and $X_m$ is such that $d_m(x,y) = 2$ if $x \neq y$.
		Since $\diam(X_i) = 2$ for all $X_i$, one can set $h \eqset 1$.
		Thus, for $i = 1, \dots, m-1$
		\[
		R_i = \frac{1}{2} 
		\begin{pmatrix}
			1 & -3 & -3 & 0 \\
			-3 & 1 & -3 & 0 \\
			-3 & -3 & 1 & 0 \\
			0 & 0 & 0 & 1
		\end{pmatrix}
		\]
		with $\sigma(R_i) = (-5/2, 1/2, 2, 2)$ and $s_+(R_i) = 3$, $s_-(R_i) = 1$.
		Furthermore, $X_m$ can be isometrically embedded into a Euclidean space, and 
		\[
		D_m = 4 \vone_{N_m} \vone_{N_m}^T - 4 \Id_{N_m}, \quad 
		S_m = 2 \Id_{N_m} - 2, \quad 
		R_m = 2 \Id_{N_m} - \frac{3}{2} \vone_{N_m} \vone_{N_m}^T,
		\]
		with $s_+(R_m) = N_m - 1$, $s_-(R_m) = 1$.
		Thus, 
		\[
		s_+(R) = 3 (m - 1) + N_m - 1, \quad s_-(R) = m,
		\]
		and therefore,
		\[
		s_+(X,d) = s_+(S) \in \{3 m - 5 + N_m, 3 m - 4 + N_m\}, \quad 
		s_-(X,d) = s_-(S) \in \{m, m + 1\}.
		\]
		
		Note that the constructed space $X$ can be represented as a graph endowed with its intrinsic distance.
	\end{example}

	\section{Some further properties and examples of limit distance signatures}
	
	\subsection{Further examples of infinite limit distance signature}
	
	\begin{example}\label{ex:sphere}
		Let $S^n$ be the standard unit $n$-dimensional sphere equipped with its intrinsic distance $d$. Then
		$s_\pm(S^n, d) = +\infty$ (see \cite[proof of proposition~6.1]{kroshnin2022infinite}). 
		On the other hand, $s_+(S^n, d^{1/2}) = +\infty$ and $s_-(S^n, d^{1/2}) = 1$ since $(S^n, d^{1/2})$ can be isometrically embedded in the Hilbert space $\ell^2$ by proposition~6.1 from~\cite{kroshnin2022infinite}.
	\end{example}
	
	\begin{example}\label{ex:torus}
		Let $\mathbb{T}^n$ be the standard $n$-dimensional flat torus equipped with its intrinsic distance $d$.
		Then $s_\pm(\mathbb{T}^n, d) = +\infty$ due to Example~\ref{ex:sphere} above and Proposition~\ref{prop_dsign_upper1}.
	\end{example}
			
	%
	%
	
	\begin{example}
		In view of the above Proposition~\ref{prop_dsign_upper1}, for the Urysohn universal space $\mathfrak{U}$ \cite{Vershik-randomMMS04a}, one has $s_\pm(\mathfrak{U})=+\infty$, because $\mathfrak{U}$ contains an isometric copy of every separable metric space, e.g., of the unit circle equipped with its intrinsic distance. For the same reason $s_\pm(c_0)=+\infty$, where $c_0$ stands for the Banach space of vanishing sequences, equipped with its usual supremum norm.
		Again by the same reason, we have that $s_\pm(X,d)=+\infty$ when $(X,d)$ is either the cylinder $S^1\times\R$ or the revolution torus in $\R^3$, equipped with their intrinsic distances.
	\end{example}
	
	Squared distance matrices are special cases of hollow symmetric non-negative (HSN) matrices, i.e.,\ symmetric matrices with nonnegative entries and with zeros on the diagonal.
	Using the spectral theory of HSN matrices from~\cite{Charles2013}, we obtain the following result
	for countable metric spaces.
	
	\begin{proposition}\label{prop_graphinfsig1}	
		Suppose $(X,d)$ is a bounded infinite separable metric space, and there exists 
		$\delta_0>0$ such that 
		$\delta_0\leq d(x,y)$ for all $x\neq y$. 
		Then $s_+(X,d)=+\infty$, in particular, $s_+(T_\mu)=s_+(K_\mu)=+\infty$ for every 
		Borel probability measure $\mu$ on $X$ with full support. 
	\end{proposition}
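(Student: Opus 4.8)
The plan is to show directly that
\[
\sup\bigl\{s_+(S(X_N)):X_N\subset X\ \text{finite}\bigr\}=+\infty,
\]
which is exactly $s_+(X,d)=+\infty$ by Definition~\ref{def_limsig1}. Writing $S(X_N)=-\tfrac12 A_N$ with $A_N\eqset\bigl(d^2(x_i,x_j)\bigr)_{i,j=1}^N$, the inequality $s_+(S(X_N))\ge k$ is the same as $s_-(A_N)\ge k$, so one must produce squared distance matrices of finite subsets of $X$ with arbitrarily many negative eigenvalues. The hypotheses enter only through the fact that $X$ is bounded and uniformly discrete (note that, being separable, $X$ is then automatically countable): every off-diagonal entry of every such $A_N$ belongs to the \emph{fixed} interval $[\delta_0^2,D^2]$, where $D\eqset\diam(X)<+\infty$.

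Fix $k\in\N$. Put $\varepsilon\eqset\delta_0^2/(2k)$ and split $[\delta_0^2,D^2]$ into finitely many subintervals, say $M$ of them, each of length at most $\varepsilon$. Since $X$ is infinite, it contains a subset of cardinality $R$ large enough that every $M$-coloring of the edges of the complete graph on $R$ vertices contains a monochromatic $k$-clique (Ramsey's theorem provides such an $R$); color each pair of points of this subset by the subinterval containing the square of the distance between them. A monochromatic clique $Y=\{y_1,\dots,y_k\}$ then satisfies $d^2(y_i,y_j)=c+e_{ij}$ for all $i\ne j$, for a single $c\ge\delta_0^2$ and numbers $0\le e_{ij}\le\varepsilon$. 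Hence
\[
A_Y=c\bigl(\vone_k\vone_k^T-\Id_k\bigr)+E,\qquad E_{ij}=e_{ij}\ (i\ne j),\quad E_{ii}=0,
\]
and $\|E\|<k\varepsilon=\delta_0^2/2\le c/2$, where $\|E\|$ is the operator norm, bounded by the Frobenius norm $\|E\|_F\le\sqrt{k(k-1)}\,\varepsilon$. Since $c(\vone_k\vone_k^T-\Id_k)$ has $-c$ as an eigenvalue of multiplicity $k-1$, Weyl's perturbation inequality gives that $A_Y$ has at least $k-1$ eigenvalues bounded above by $-c+\|E\|<-c/2<0$. Therefore $s_+(S(Y))=s_-(A_Y)\ge k-1$, and since $k$ was arbitrary, $s_+(X,d)=+\infty$.

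For the remaining assertion, boundedness of $X$ makes the $4$-th moment of any Borel probability measure $\mu$ on $X$ automatically finite, so when additionally $\supp\mu=X$, Theorem~\ref{th_spm1}(iv) yields $s_+(K_\mu)=s_+(X,d)=+\infty$ and then $s_+(T_\mu)\ge s_+(K_\mu)-1=+\infty$. The one point requiring care is the order of quantifiers in the Ramsey step: perturbing a \emph{large} matrix that is globally close to $c(\vone\vone^T-\Id)$ does not work, because the operator norm of the error grows with the size; the remedy is to first fix the target size $k$, then discretize the distance range with mesh of order $\delta_0^2/k$, and only then invoke Ramsey with the corresponding number of colors, so that the error matrix of the extracted clique has operator norm below the spectral gap $\delta_0^2$ of the ``uniform'' matrix. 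Alternatively, $s_+(X,d)=+\infty$ can be obtained directly from the spectral theory of HSN matrices developed in~\cite{Charles2013}.
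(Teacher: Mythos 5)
Your proof is correct and takes a genuinely different route from the paper's. The paper simply invokes Theorem~3.3 of \cite{Charles2013}, which states that for fixed $0<a\le b<\infty$ and any $m$ there is an $n_0$ such that every HSN matrix of order $\ge n_0$ with off-diagonal entries in $[a,b]$ has at least $m$ negative eigenvalues, and applies it with $a=\delta_0^2$, $b=\diam(X)^2$. You instead give a self-contained argument: discretize $[\delta_0^2,D^2]$ with mesh $\varepsilon=\delta_0^2/(2k)$, use Ramsey's theorem to extract a nearly equilateral $k$-clique $Y$, write $A_Y=c(\vone_k\vone_k^T-\Id_k)+E$ with $\|E\|<k\varepsilon\le c/2$, and invoke Weyl's inequality to conclude $s_-(A_Y)\ge k-1$. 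The key point you correctly identify — and which makes the argument work — is the order of quantifiers: fix the target size $k$ \emph{first}, then choose the mesh of order $\delta_0^2/k$, so that the error matrix of the extracted clique stays below the $-\delta_0^2$ spectral gap; a naive global approximation would fail because the operator norm of the error would grow with the size of the clique. The deduction of $s_+(K_\mu)=s_+(T_\mu)=+\infty$ from Theorem~\ref{th_spm1}(iv) (and the bound $s_+(T_\mu)\ge s_+(K_\mu)-1$) via the automatic finiteness of the $4$-th moment is exactly as in the paper. Your approach buys a self-contained, elementary proof that doubles as a proof of the special case of the cited HSN result needed here; the paper's approach is shorter but outsources the combinatorial content.
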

	
	\begin{proof}
		For every finite subset $X_N\subset X$,
		the squared distance matrix of $X_N$ is a HSN 
		matrix with off-diagonal entries from the interval $[\delta_0^2,\delta_1^2]$, where $\delta_1$ is the diameter of $(X,d)$. Since $\delta_0>0$ and $\delta_1<+\infty$, by theorem~3.3 from \cite{Charles2013}, there exists some $n_0\in \N$ such that any HSN matrix of order at least $n_0$ with off-diagonal entries from $[\delta_0^2,\delta_1^2]$ admits at least $m$ strictly negative eigenvalues. Therefore, for any fixed $m$, there exists some $n_0$ such that for any finite $X_N\subset X$ with $\# X_N\geq n_0$, one has that $s_+(S(X_N))\geq m$, proving the claim.
	\end{proof}
	
	The assumptions of the above corollary are satisfied, for instance, when
	the metric space $(X,d)$ is induced by a 
	connected countable graph with its intrinsic metric on vertices being bounded.

	\subsection{Limits and tangent cones}
	
	\begin{proposition}\label{prop_GH1}
		Let $(X_k, d_k)$ be a sequence of separable metric spaces with $p_k\in X_k$, and the sequence of pointed metric spaces $(X_k, d_k, p_k)$ converge as $k\to\infty$ to a pointed separable metric space $(X, d, p)$ in the pointed Gromov--Hausdorff sense.
		Assume that all $(X_k, d_k)$ and $(X, d)$ satisfy the Heine--Borel property (i.e., closed balls are compact).
		Then
		\[
		s_\pm (X,d)\leq \liminf_k s_\pm (X_k,d_k). 
		\] 
	\end{proposition}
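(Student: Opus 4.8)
The plan is to deduce the inequality from the definition~\eqref{eq_spmSup1} of $s_\pm(X,d)$ as a supremum over finite subsets, together with the elementary fact that the number of strictly positive (resp.\ strictly negative) eigenvalues of a real symmetric matrix is lower semicontinuous under entrywise convergence. Concretely, it suffices to prove that for every finite subset $X_N = \{x_1,\dots,x_N\}\subset X$ one has $s_\pm(S(X_N)) \le \liminf_k s_\pm(X_k,d_k)$; taking the supremum over all such $X_N$ then yields the claim.

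First I would fix such a finite $X_N$, whose points are automatically pairwise distinct. Since $(X,d)$ has the Heine--Borel property, $X_N$ lies in a compact ball $\bar B(p,r)$, and pointed Gromov--Hausdorff convergence of proper spaces can be read off on balls: for every $\varepsilon>0$ there is, for all large $k$, an $\varepsilon$-isometry between $\bar B(p,r+1)$ and $\bar B(p_k,r+1)\subset X_k$. Using these $\varepsilon$-isometries with $\varepsilon\downarrow 0$, I would select points $x_1^k,\dots,x_N^k\in X_k$ such that $\max_{i,j}|d_k(x_i^k,x_j^k)-d(x_i,x_j)|\to 0$. Because $\min_{i\ne j} d(x_i,x_j)>0$, for $k$ large the points $x_i^k$ are pairwise distinct, so $\{x_1^k,\dots,x_N^k\}$ is a genuine $N$-point subset of $X_k$, and hence $s_\pm(X_k,d_k)\ge s_\pm\bigl(S(\{x_1^k,\dots,x_N^k\})\bigr)$ by the definition of $s_\pm(X_k,d_k)$. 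On the other hand $S(\{x_1^k,\dots,x_N^k\})\to S(X_N)$ entrywise, so the ordered eigenvalues converge, and since a strictly positive (resp.\ strictly negative) eigenvalue of $S(X_N)$ remains strictly positive (resp.\ negative) under a small perturbation, $\liminf_k s_\pm\bigl(S(\{x_i^k\})\bigr)\ge s_\pm(S(X_N))$. Chaining these inequalities gives $\liminf_k s_\pm(X_k,d_k)\ge s_\pm(S(X_N))$, and the supremum over finite $X_N\subset X$ finishes the proof.

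I expect the only genuinely delicate step to be the extraction of the approximating points $x_i^k\in X_k$ together with the convergence of all pairwise distances; this is precisely where the Heine--Borel hypothesis enters, ensuring that the finite configuration sits inside a compact ball on which pointed Gromov--Hausdorff convergence manifests itself as honest $\varepsilon$-isometries. Once the points are chosen, the rest is soft: monotonicity of $s_\pm$ under passing to finite subsets (Cauchy interlacing, or directly the definition) and lower semicontinuity of the eigenvalue counts under entrywise matrix convergence.
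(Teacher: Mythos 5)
Your proof is correct and follows essentially the same route as the paper's: fix a finite subset of $X$, use the Heine--Borel property to place it in a compact ball, extract approximating points in $X_k$ via the $\varepsilon$-isometries of pointed Gromov--Hausdorff convergence, invoke lower semicontinuity of $s_\pm$ under small matrix perturbations (the paper cites its Lemma~\ref{lem:sign_comparison}(ii) for this), and take the supremum over finite subsets. The only cosmetic difference is that you explicitly note the approximating points become pairwise distinct for large $k$, whereas the paper leaves that implicit (it would be covered by Lemma~\ref{lm_cancelrep_matrix1} in any case).
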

	
	\begin{proof}
		We prove the statement for $s_+$, and the proof for $s_-$ is completely analogous.
		Fix a finite set $\Sigma_N \subset X$ and let $\bar B_r(p)\subset X$ be a closed ball centered at $p$ such that $\Sigma_N\subset \bar B_r(p)$. 
		It follows from the definition of the pointed Gromov--Hausdorff convergence \cite[definition~8.1.1]{burago2001course} that for an arbitrary $\varepsilon > 0$ there are $\bar k \in \N$ and maps $f^k_\varepsilon \colon \bar B_r(p) \subset X \to
		\bar B_{r+\varepsilon}(p_k) \subset X_k$ with $f_\varepsilon^k(p) = p_k$ such that 
		\[
		\left|d_k(f_\varepsilon^k(x), f_\varepsilon^k(y)) - d(x,y) \right| \leq \varepsilon
		\]
		for all $x,y\in \bar B_r(p)$ and $k\geq \bar k$.
		Choosing $\varepsilon>0$ sufficiently small, by Lemma~\ref{lem:sign_comparison}(ii) one has
		\[
		s_+(S(f_\varepsilon^k(\Sigma_N)))\geq s_+(S(\Sigma_N))
		\]
		for all $k\in \N$ sufficiently large. 
		We obtain the bound
		\[
		\liminf_k s_+(X_k,d_k)\geq \liminf_k s_+(S(f_\varepsilon^k(\Sigma_N)))\geq s_+(S(\Sigma_N)) .
		\]
		Taking a supremum with respect to all finite subsets $\Sigma_N \subset X$, we get the claim.
	\end{proof}
	
	\begin{corollary}
		Suppose that the metric space $(X,d)$ with Heine--Borel property has a tangent cone $(T_p, \tilde d)$ at $p\in X$.
		Then $s_\pm(X,d)\geq s_\pm (T_p, \tilde d)$. In particular, if $(X,d)$ is a smooth Riemannian manifold equipped with its intrinsic distance, then $s_+(X,d)\geq \dim X$.
		
		Similarly, if $(X,d)$ has a (Gromov--Hausdorff) asymptotic cone $(C_p, \tilde d)$ at $p\in X$, then $s_\pm(X,d)\geq s_\pm (C_p, \tilde d)$.
	\end{corollary}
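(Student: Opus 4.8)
The plan is to realize the tangent (resp.\ asymptotic) cone as a pointed Gromov--Hausdorff limit of rescalings of $(X,d)$ and then invoke Proposition~\ref{prop_GH1}, the crucial point being that rescaling the metric by a positive constant does not change the limit distance signature. First I would record the elementary scaling observation: for $\lambda>0$ and a finite subset $X_N\subset X$, the matrix $S(X_N)$ built from the rescaled distance $\lambda d$ equals $\lambda^2$ times the matrix $S(X_N)$ built from $d$; since multiplying a symmetric matrix by a positive scalar leaves its inertia unchanged, the numbers $s_\pm$ of the two matrices coincide. Taking suprema over all finite $X_N\subset X$ yields
\[
s_\pm(X,\lambda d)=s_\pm(X,d)\qquad\text{for every }\lambda>0 .
\]
I would also note that $(X,\lambda d)$ inherits the Heine--Borel property, since a closed ball of radius $r$ in $(X,\lambda d)$ is precisely a closed ball of radius $r/\lambda$ in $(X,d)$, hence compact.

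By definition of the tangent cone there is a sequence $\lambda_k\to+\infty$ with $(X,\lambda_k d,p)\to(T_p,\tilde d,o)$ in the pointed Gromov--Hausdorff sense, $o$ being the apex of the cone. As a pointed Gromov--Hausdorff limit of spaces with the Heine--Borel property, $(T_p,\tilde d)$ has the Heine--Borel property as well (a Gromov--Hausdorff limit of compact balls is compact). Hence Proposition~\ref{prop_GH1} applies to the sequence $(X,\lambda_k d,p)$ and gives
\[
s_\pm(T_p,\tilde d)\le\liminf_k s_\pm(X,\lambda_k d)=s_\pm(X,d),
\]
which is the claimed inequality. The statement for an asymptotic cone follows verbatim, with $\lambda_k\to 0^+$ in place of $\lambda_k\to+\infty$.

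For the Riemannian case, recall that at any point $p$ of a smooth Riemannian manifold $(X,d)$ of dimension $n$ the tangent cone is isometric to the Euclidean space $\R^n$ with apex the origin (pass to normal coordinates and note that the rescaled distances converge to the flat one). Therefore $s_+(T_p,\tilde d)=s_+(\R^n)=n$ by Example~\ref{ex_spmEucl1}, and the previous step yields $s_+(X,d)\ge n=\dim X$.

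I do not anticipate a genuine obstacle: the corollary is essentially a one-line reduction to Proposition~\ref{prop_GH1}. The only points requiring a little care are the scale-invariance of the inertia (immediate) and verifying that the hypotheses of Proposition~\ref{prop_GH1} hold for the rescaled sequence and its limit --- in particular the Heine--Borel property of the cone, which follows from the standard fact that a pointed Gromov--Hausdorff limit of proper pointed metric spaces is proper.
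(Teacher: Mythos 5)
Your proof is correct and follows essentially the same route as the paper's: identify the tangent (resp.\ asymptotic) cone as a pointed Gromov--Hausdorff limit of rescalings, observe that $s_\pm(X,\lambda d)=s_\pm(X,d)$ because scaling $d$ multiplies each $S(X_N)$ by a positive constant, and apply Proposition~\ref{prop_GH1} together with Example~\ref{ex_spmEucl1} for the Riemannian case. You are somewhat more careful than the paper in explicitly checking that the rescaled spaces and the limit cone retain the Heine--Borel property, which is a worthwhile addition but not a different method.
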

	
	\begin{proof}
		Recall that the pointed metric space $(T_p, \tilde d, p)$ (resp.,\ $(C_p, \tilde d, p)$) is the pointed Gromov--Hausdorff limit of
		the sequence $(X, kd, p)$ (resp.,\ $(X, d/k, p)$) as $k\to +\infty$. Apply the above Proposition~\ref{prop_GH1}, having in mind that
		\[
		s_\pm(X,kd)=s_\pm(X,d/k)=s_\pm(X,d).
		\]
		If $(X,d)$ is a smooth Riemannian manifold,
		it suffices to recall that $T_p=\R^n$ with $n=\dim X$, and $(T_p,\tilde d)$ is isometric to $\mathbb{R}^n$ with the Euclidean distance so that $s_+(T_p,\tilde d)=s_+(\R^n)=n$
		by Example~\ref{ex_spmEucl1}.
	\end{proof}

	\section{Limit signature of the Rado graph}
	
	\subsection{Preliminaries on the spectral theory of random matrices and the Wigner semi-circle law}
	Random matrices play an important role in the study of the Rado graph, so we outline the relevant results on random matrices used in our paper.
	
	Recall the following terminology in matrix spectral distribution. Let $W_N \in \R^{N \times N}$ be a real symmetric matrix. Denote by $\lambda_1 \leq \lambda_2 \leq \dots \leq \lambda_N$ the eigenvalues of $W_N$ counting multiplicity. Then the empirical spectral distribution (ESD) of the normalized matrix $\dfrac{1}{\sqrt{N}} W_N$ is the probability measure
	\begin{align}
		\label{eqn:esd def}
		\mu_N \eqset \dfrac{1}{N}\sum_{j=1}^N \delta_{\lambda_j/\sqrt{N}}.
	\end{align}
	
	Now, we consider a sequence of random matrices $(W_N)_{N=1}^\infty$ constructed as follows. Let $(Y_k)_{k=1}^\infty$ be a sequence of i.i.d.\ random variables, and $(Z_{ij})$ with $1\leq i<j$ be a family of i.i.d.\ random variables. For each $N\geq 1$, $W_N$ is a symmetric matrix of order $N$ with entries given by
	\begin{align*}
		(W_N)_{k,k}=Y_k,\ (W_N)_{i,j}=(W_N)_{j,i}=Z_{ij},\ \quad\mbox{for all } 1\leq k\leq N,\ 1\leq i<j\leq N.
	\end{align*}
	Thus, $W_N$ can be viewed as upper left submatrices of order $N$ of the same infinite size random matrix. 
	For each $N\geq 1$, by taking the ESD of the normalized random matrix $\dfrac{1}{\sqrt{N}} W_N$, we obtain a random probability measure of the form \eqref{eqn:esd def}. The following version of the Wigner semi-circle law from \cite[theorem 2.5]{Bai2010} indicates the ESDs of these normalized random matrices converge to the semi-circle distribution.
	
	\begin{proposition}[Wigner semi-circle law]
		\label{theorem:semicircle law}
		Let $W_N$ be the order $N$ random real symmetric matrix such that the diagonal terms are i.i.d.\ random variables, and the off-diagonal terms are i.i.d.\ random variables with variance $\sigma^2 > 0$, constructed as above. Then with probability $1$, the ESDs of normalized random matrices $\dfrac{1}{\sqrt{N}} W_N$ weakly converge to the semi-circle law, i.e.,\ to a measure with density
		\[
		\rho_\sigma(x) = \begin{cases}
			\frac{1}{2 \pi \sigma^2} \sqrt{4 \sigma^2 - x^2}, & -2 \sigma \le x \le 2 \sigma,\\
			0, & \text{otherwise}.
		\end{cases}
		\]
	\end{proposition}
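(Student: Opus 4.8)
The plan is to prove this by the classical method of moments, preceded by two standard reductions. First I would reduce to mean-zero entries: if $c$ denotes the common mean of the off-diagonal variables $Z_{ij}$, then subtracting $c$ from every off-diagonal entry replaces $W_N$ by $W_N - c(\vone_N\vone_N^T - \Id_N)$, and after dividing by $\sqrt N$ the term $\tfrac{c}{\sqrt N}\Id_N$ merely shifts all eigenvalues by $c/\sqrt N\to 0$, while $\tfrac{c}{\sqrt N}\vone_N\vone_N^T$ is rank one, so by the Cauchy interlacing theorem the empirical spectral distribution (ESD) moves by at most $1/N$ in the Kolmogorov (equivalently L\'evy) distance. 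The diagonal entries, being only $N$ in number, contribute negligibly to the limiting ESD and can be dealt with the same way. Next I would reduce to bounded entries by truncation: replace each $Z_{ij}$ by $Z_{ij}\mathbf{1}_{\{|Z_{ij}|\le B\}}$, recentred and rescaled to restore mean $0$ and variance $\sigma^2$; the squared Frobenius norm of the perturbation, divided by $N^2$, is controlled by $\tfrac{1}{N^2}\sum_{i<j}|Z_{ij}|^2\mathbf{1}_{\{|Z_{ij}|>B\}}$, which by the strong law of large numbers is almost surely small once $B$ is large, and the Hoffman--Wielandt inequality then bounds the Wasserstein-$2$ distance between the two ESDs accordingly. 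Thus it would suffice to treat entries with all moments finite.

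In the bounded case I would, for each fixed $k\in\N$, compute
\[
\mathbb{E}\Big[\int x^k\,d\mu_N(x)\Big]
= \frac{1}{N^{1+k/2}}\,\mathbb{E}\,\mathrm{tr}\bigl(W_N^{\,k}\bigr)
= \frac{1}{N^{1+k/2}} \sum_{i_0,\dots,i_{k-1}} \mathbb{E}\bigl[(W_N)_{i_0 i_1}(W_N)_{i_1 i_2}\cdots (W_N)_{i_{k-1} i_0}\bigr],
\]
indexing the sum by closed walks of length $k$ on $\{1,\dots,N\}$. By independence and mean zero, a walk contributes nothing unless each of its edges is traversed at least twice, which forces it to visit at most $k/2+1$ distinct vertices; a counting argument shows that for $k=2\ell$ even the walks attaining this bound number $C_\ell N^{\ell+1}(1+o(1))$, with $C_\ell=\tfrac{1}{\ell+1}\binom{2\ell}{\ell}$ the $\ell$-th Catalan number, each contributing $\sigma^{2\ell}$, while all remaining walks and all odd-length walks contribute $o(N^{1+k/2})$. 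Hence $\mathbb{E}[\int x^k\,d\mu_N]\to m_k$ with $m_{2\ell}=\sigma^{2\ell}C_\ell$ and $m_{2\ell+1}=0$, and one checks these coincide with $\int x^k\rho_\sigma(x)\,dx$. A similar, slightly longer walk count gives $\operatorname{Var}(\int x^k\,d\mu_N)=O(N^{-2})$, so by Chebyshev's inequality and the Borel--Cantelli lemma, $\int x^k\,d\mu_N\to m_k$ almost surely for each $k$; intersecting these countably many events, with probability one all moments of $\mu_N$ converge to those of $\rho_\sigma$. Since $\rho_\sigma$ has compact support it is the unique probability measure with this moment sequence (Carleman's condition holds trivially), so almost sure convergence of all moments upgrades to almost sure weak convergence $\mu_N\rightharpoonup\rho_\sigma$; undoing the two reductions, each of which perturbs the ESD by an amount tending to $0$ almost surely, then completes the argument.

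I expect the main obstacle to be the truncation step: one must perturb the entry distributions while keeping the variance exactly $\sigma^2$ and still obtain an \emph{almost sure} (not merely in-probability) control of the resulting ESD perturbation, uniformly enough in $N$ that it survives the limit $B\to\infty$, which is precisely where the strong law of large numbers and the Hoffman--Wielandt inequality are used. The other technically substantial piece is the combinatorial core of the moment computation, namely isolating the dominant ``double-tree'' closed walks and enumerating them by Catalan numbers. An alternative I would keep in mind, which sidesteps the walk combinatorics, is the Stieltjes-transform method: show that $s_N(z)=\tfrac1N\operatorname{tr}\bigl((\tfrac1{\sqrt N}W_N-z)^{-1}\bigr)$ satisfies, almost surely and locally uniformly for $z\in\mathbb{C}\setminus\R$, the limiting self-consistent equation $s(z)=\bigl(-z-\sigma^2 s(z)\bigr)^{-1}$, whose relevant branch is the Stieltjes transform of $\rho_\sigma$; this trades the combinatorics for Schur-complement identities for the resolvent together with a concentration estimate for quadratic forms. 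Either route yields the statement, for which full details are in~\cite[Theorem 2.5]{Bai2010}.
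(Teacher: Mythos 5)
The paper does not give a proof of this proposition at all: it simply cites it as \cite[theorem 2.5]{Bai2010}. Your sketch is therefore supplying what the paper delegates to the literature, and it follows the standard moment-method route correctly: centring and truncation reductions, the trace/closed-walk expansion, the Catalan-number enumeration of the dominant double-tree walks (giving $m_{2\ell}=\sigma^{2\ell}\cdot\frac{1}{\ell+1}\binom{2\ell}{\ell}$), an $O(N^{-2})$ variance estimate feeding Chebyshev and Borel--Cantelli for almost-sure convergence of each moment, moment determinacy from compact support, and finally undoing the reductions. Your observation that the Stieltjes-transform route is an alternative that trades the walk combinatorics for resolvent identities is also accurate.

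One spot is glossed over and does not work as stated. You claim the diagonal entries, ``being only $N$ in number,'' can be dealt with ``the same way'' as the mean-subtraction; but the mean-subtraction was a rank-one perturbation plus a vanishing scalar, whereas $\frac{1}{\sqrt N}\,\mathrm{diag}(Y_1,\dots,Y_N)$ has full rank. Moreover the hypothesis imposes no moment condition whatsoever on the $Y_k$, so you can bound neither its Frobenius norm (ruling out Hoffman--Wielandt) nor its operator norm directly. The correct device is the rank inequality for the Kolmogorov distance between ESDs of Hermitian matrices: truncate each $Y_k$ at a fixed level $M$; the original and truncated diagonals differ in at most $\#\{k\le N: |Y_k|>M\}$ positions, a quantity whose ratio to $N$ converges a.s.\ to $\P(\{|Y_1|>M\})$ by the strong law and is thus made small by choosing $M$ large; the truncated diagonal over $\sqrt N$ then has operator norm $\le M/\sqrt N\to 0$. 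With that patch, and modulo carrying out the double-tree count and the $O(N^{-2})$ variance estimate in full, your outline is the standard proof.
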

	
	In particular, the above theorem implies that the signatures of the matrices $(W_N)_{N=1}^\infty$ almost surely satisfy
	\begin{align}
		\lim_{N\rightarrow\infty} s_\pm(W_N) = \infty, \quad
		\lim_{N\rightarrow\infty} \dfrac{s_+(W_N)}{s_-(W_N)} = 1.
	\end{align}

	\subsection{Signatures of the MDS operators and limit signature of the Rado graph}
	\label{sec:mds op rado}
	
	The Rado graph $R$ is a homogeneous, countable, and universal graph in the sense that it contains all finite graphs as its induced subgraphs. The Rado graph also has the following finite extension property: given any two finite disjoint subsets $U,V$ of $R$, there exists an $x\in R\setminus(U\cup V)$ such that $x$ is adjacent to all points in $U$, but not adjacent to any point in $V$. The Rado graph is known to be unique up to graph isomorphism, see \cite[chapter VII]{graham2013mathematics} for details.
	Let $d$ be the intrinsic (graph) metric on $R$, and $\mu$ be any Borel probability measure on $(R,d)$ having full support.
	The above properties immediately imply that the intrinsic metric $d$ of $R$ is bounded with $d \leq 2$.
	
	We also recall that the Rado graph admits the following random construction (infinite Erd{\H o}s--R{\'e}nyi graph). 
	Fix any real number $0<p<1$. Let $(Z_{ij})$ with $i<j\in \N$ be a family of i.i.d.\ Bernoulli random variables: $Z_{ij} \sim Be(p)$.
	Then we define a random graph $G$ with the set of vertices $\N$ and the random adjacency matrix $\mathcal{A}$ defined by
	\begin{align}
		\label{eqn:infinite Bernoulli matrix}
		\mathcal{A}_{ij}=\mathcal{A}_{ji}=Z_{ij},\ \mathcal{A}_{ii}=0,\ \quad\mbox{for all } 1\leq i<j.
	\end{align}
	With probability $1$, this construction yields a graph isomorphic to the Rado graph---thus, it can be considered as a ``random enumeration'' of $R$.
	Furthermore, for each $N\geq 1$, let $S_N$ be the random matrix of order $N$ such that
	\begin{equation}\label{eq_SNRado1}
		(S_{N})_{ij} = \dfrac{3}{2}\mathcal{A}_{ij} - 2,\quad (S_N)_{ii} = 0 \quad\mbox{for all $1\leq i\neq j\leq N$}.
	\end{equation}
	Then almost surely one has $(S_{N})_{ij} = - \frac{d^2_G(i, j)}{2}$ for $1 \le i, j \le N$, where $d_G$ is the intrinsic metric on $G$. Therefore, provided that $G$ is isomorphic to $R$, there is a sequence of finite subsets $R_N \subset R$ such that $R_N \nearrow R$, $\# R_N = N$, and $S_N = S(R_N)$. 
	Since the random matrices $(S_N)_{N\geq 1}$ also satisfy the assumptions of Proposition~\ref{theorem:semicircle law}, we obtain the following theorem on the limit distance signature of $(R,d,\mu)$.
	
	\begin{theorem}
		\label{theorem:rado sign}
		One has
		\[
		s_\pm (R, d) = +\infty
		\]
		and $s_\pm (K) = s_\pm (T) = +\infty$ for every probability measure $\mu$ over $R$ having full support. 
		Moreover, the only functions in $L^2(R,\mu)$ in the kernel of $T$ are constants.
	\end{theorem}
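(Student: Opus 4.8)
\emph{Overview.} The plan is to treat the three assertions in turn, the first two being quick consequences of tools already developed and the last one requiring a direct argument based on the finite extension property of $R$. For $s_\pm(R,d)=+\infty$ I would use the random enumeration of $R$ as the infinite Erd\H{o}s--R\'enyi graph: with probability one the random graph $G$ is isomorphic to $R$, and then $S_N=S(R_N)$ for an increasing exhaustion $R_N\nearrow R$ with $\#R_N=N$, where $S_N$ is the $N$-th principal submatrix of the fixed infinite random matrix with zero diagonal and off-diagonal entries $\tfrac32 Z_{ij}-2$. These matrices satisfy the hypotheses of Proposition~\ref{theorem:semicircle law} (the diagonal entries are equal, hence i.i.d., and the off-diagonal ones are i.i.d.\ with variance $\sigma^2=\tfrac94 p(1-p)>0$), so almost surely the empirical spectral distributions of $\tfrac1{\sqrt N}S_N$ converge weakly to the semicircle law $\rho_\sigma$, symmetric and absolutely continuous on $[-2\sigma,2\sigma]$. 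Fixing $0<\varepsilon<2\sigma$, the endpoints of $(\varepsilon,\infty)$ are $\rho_\sigma$-null, so by the portmanteau theorem $\mu_N((\varepsilon,\infty))\to\int_\varepsilon^{2\sigma}\rho_\sigma(x)\,dx>0$; hence the number of eigenvalues of $S_N$ exceeding $\varepsilon\sqrt N$ grows at least linearly in $N$, and symmetrically on the negative side, giving $\lim_N s_\pm(S_N)=+\infty$ almost surely. Applying Theorem~\ref{th_spm1}(i) to the exhaustion $R_N$ then yields $s_\pm(R,d)=+\infty$, which is a deterministic quantity. (For $s_+$ alone one could instead invoke Proposition~\ref{prop_graphinfsig1}, since $(R,d)$ is bounded, countably infinite, and $d\ge1$.) Since $d\le2$ forces every Borel probability measure on $R$ to have finite $4$-th moment and $\supp\mu=R$ by hypothesis, Theorem~\ref{th_spm1}(iv) gives $s_\pm(K_\mu)=s_\pm(R,d)=+\infty$, and the inequalities $s_\pm(K_\mu)-1\le s_\pm(T_\mu)\le s_\pm(K_\mu)$ in the same theorem force $s_\pm(T_\mu)=+\infty$.

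\emph{The kernel of $T$.} It remains to identify $\ker T$ with the constants. Here I would compute $K$ explicitly from the algebraic identity $-\tfrac12 d^2(x,y)=-2+2\delta_{xy}+\tfrac32 A_{xy}$, valid for all $x,y\in R$, with $A_{xy}$ the adjacency indicator. Since $R$ is countable, $\mu$ is atomic, and $p_x:=\mu(\{x\})>0$ for all $x$, integrating against $\mu$ gives, for every $g\in L^2(R,\mu)$ and every $x$,
\[
(Kg)(x)=-2\int_R g\,d\mu+2p_xg(x)+\tfrac32\sum_{y\sim x}p_yg(y),
\]
the last sum converging absolutely by Cauchy--Schwarz. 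Writing $f\in\ker T$ as $f=c\fone+g$ with $g\perp\fone$, one has $Tf=PKg$ (as $T=PKP$ with $P$ the orthogonal projection onto $\fone^\perp$), so $Tf=0$ is equivalent to $Kg\in\Span\{\fone\}$; and since $g\perp\fone$ amounts to $\sum_x p_xg(x)=0$, the formula above collapses to
\[
2p_xg(x)+\tfrac32\sum_{y\sim x}p_yg(y)=\lambda\qquad\text{for all }x\in R
\]
for some constant $\lambda\in\R$. Setting $h(x):=p_xg(x)$, Cauchy--Schwarz gives $h\in\ell^1(R)$ with $\sum_x h(x)=0$, and the relation becomes $2h(x)+\tfrac32\sum_{y\sim x}h(y)=\lambda$ for every $x$.

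\emph{The crux.} It remains to force $h\equiv0$ using the finite extension property. Given a finite $W\subset R$ and $\varepsilon>0$, choose a finite $F\supseteq W$ with $\sum_{y\notin F}|h(y)|<\varepsilon$ (possible since $h\in\ell^1$). By the finite extension property there are infinitely many vertices $z\notin F$ adjacent to every point of $W$ and to no point of $F\setminus W$; since only finitely many vertices satisfy $|h(\cdot)|\ge\varepsilon$, one may pick such a $z$ with $|h(z)|<\varepsilon$. Every neighbour of $z$ then lies in $W$ or outside $F$, so $\bigl|\sum_{y\sim z}h(y)-\sum_{y\in W}h(y)\bigr|<\varepsilon$, and evaluating the relation at $x=z$ gives $\bigl|\lambda-\tfrac32\sum_{y\in W}h(y)\bigr|<\tfrac72\varepsilon$. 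Letting $\varepsilon\to0$ yields $\sum_{y\in W}h(y)=\tfrac23\lambda$ for \emph{every} finite $W$; the choice $W=\{a\}$ gives $h\equiv\tfrac23\lambda$, and then $W=\{a,b\}$ forces $\lambda=0$, hence $h\equiv0$, $g\equiv0$, and $\ker T=\Span\{\fone\}$, i.e.\ the constants. I expect this last step to be the principal difficulty: one must control the $\ell^1$-tail of $h$ and, above all, extract from the infinitely many vertices furnished by the finite extension property one at which $h$ is small, so that the local identity there effectively reads off the partial sum $\sum_{y\in W}h(y)$.
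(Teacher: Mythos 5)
Your proposal is correct. For the first two claims ($s_\pm(R,d)=+\infty$, $s_\pm(K)=s_\pm(T)=+\infty$) it follows exactly the paper's route: random Erd\H{o}s--R\'enyi--Rado enumeration plus Wigner semicircle law to get $s_\pm(S_N)\to\infty$ a.s., then Theorem~\ref{th_spm1}(i), (iv) and Proposition~\ref{prop: countable space sign}; you merely spell out the portmanteau step that the paper states without proof after Proposition~\ref{theorem:semicircle law}.

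For the kernel of $T$ your argument is a genuine variant of the paper's, worth noting. Both exploit the finite extension property together with $\ell^1$-summability of $h(j)=p_j g(j)$ (the paper's $v_j$). The paper, however, splits $v_j$ into positive and negative parts $v_n^\pm$, picks two test vertices $i_1,i_2>n$ (one adjacent to the support of $v_n^+$ and not to that of $v_n^-$, the other with roles swapped), and derives a contradiction $-3v_n^+ + 3v_n^- + O(\sum_{j>n}|v_j|)=0$ as $n\to\infty$. You instead first write down the clean algebraic identity $-\tfrac12 d^2(x,y)=-2+2\delta_{xy}+\tfrac32 A_{xy}$, which reduces $Kg=\lambda\fone$ (on $\fone^\perp$) to the explicit local relation $2h(x)+\tfrac32\sum_{y\sim x}h(y)=\lambda$. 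Then, using a \emph{single} test vertex $z$ chosen so that $z$ sees exactly $W$ inside $F$ and (crucially) $|h(z)|<\varepsilon$ --- this extra selection being possible since $h\in\ell^1$ leaves only finitely many large values --- you obtain $\sum_{y\in W}h(y)=\tfrac23\lambda$ for \emph{every} finite $W$, which instantly forces $h\equiv 0$. The paper avoids having to control the diagonal term because $d(i_1,i_1)=0$ makes it vanish in their formulation, whereas your local relation keeps the diagonal as $2h(z)$, which is why you must pick $z$ with $h(z)$ small. Your version is a bit more transparent (one test vertex, constancy of all partial sums rather than a tailored contradiction), and the explicit formula for $K$ makes the structure visible; the paper's is marginally shorter in that it never needs to separately control the diagonal entry. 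Both are correct.
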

	
	\begin{proof}
		By the Wigner semi-circle law (Proposition~\ref{theorem:semicircle law}), with probability $1$, we have
		\[
		\lim_{N\rightarrow\infty} s_\pm(S_N)=+\infty.
		\]
		On the other hand, by the random construction of the Erd{\H o}s--R{\'e}nyi--Rado graph, almost surely there is a sequence of finite subsets $R_N \subset R$ with $R_N\nearrow R$ as $N\to\infty$ such that 
		\[
		S_N = S(R_N).
		\] 
		Thus by Theorem~\ref{th_spm1}(i), we have 
		$s_\pm(R,d) = +\infty$, which by Theorem~\ref{th_spm1}(v) implies $s_\pm(K)=+\infty$.
		Finally, Proposition~\ref{prop: countable space sign} yields $s_\pm(T)=+\infty$.
		
		It remains to compute the dimension of the kernel $\ker T$ of $T$. Since $T=PKP$, clearly $\fone \in \ker T$. 
		Since $P$ is self-adjoint in $L^2(R,\mu)$, we have $0\neq f \in \fone^{\perp}\cap \ker T$ if and only if $K(f)$ is collinear to the constant function $\fone$ in $L^2(R,\mu)$. Number the vertices of $R$ by positive integers. The fact $\mu$ has full support implies $(Kf)_i=(Kf)_j$ for all $i,j\in\N$. The sequence $(Kf)_i$ has an infinite ``matrix multiplication'' representation. More specifically, set
		\[
		S = -\dfrac{1}{2}\left(d^2(i,j)\right)_{i,j=1}^{\infty},\ v_j=f_j\mu(\{j\}).
		\]
		We then have 
		\begin{align*}
			(Kf)_i = \sum_{j=1}^{\infty} S_{ij}v_j.
		\end{align*}
		For any $n\geq 1$, define
		\begin{align*}
			v_n^+=\sum_{j\leq n,\ v_j\geq 0} v_j,\ v_n^-=\sum_{j\leq n,\ v_j< 0} v_j
		\end{align*}
		Because $\mu$ has full support and $f\neq 0$, we see $v_n^+\nearrow v^+>0$ and $v_n^-\searrow v^-<0$. Since $f\in L^2(R,\mu)$ and $\mu(R)=1$, the sequence $(v_j)$ is absolutely convergent and $v^+,v^-$ are finite. Using the finite extension property of the Rado graph, for any fixed $n\geq 1$, we can find $i_1>n$ such that $i_1$ is adjacent to all vertices in the summation of $v_n^+$ while non-adjacent to all vertices in the summation of $v_n^-$. Therefore, we obtain
		\begin{align*}
			(Kf)_{i_1}=-\dfrac{1}{2} \left( v_n^+ +4 v_n^- +\sum_{j>n} v_j d^2(i_1,j)\right)
		\end{align*}
		Similarly we can find $i_2>n$ such that
		\begin{align*}
			(Kf)_{i_2}=-\dfrac{1}{2} \left( 4v_n^+ + v_n^- +\sum_{j>n} v_j d^2(i_2,j)\right)
		\end{align*}
		Using $(Kf)_{i_1}=(Kf)_{i_2}$, we get
		\begin{align}
			\label{eqn:vertex diff}
			-3v_n^+ +3 v_n^- +\sum_{j>n}(d^2(i_1,j)-d^2(i_2,j))v_j=0
		\end{align}
		Since the diameter of $(R,d)$ is $2$, we find
		\begin{align*}
			\biggr\vert \sum_{j>n} \left(d^2(i_1,j)-d^2(i_2,j)\right)v_j \biggr\vert \leq 4 \sum_{j>n} \vert v_j\vert
		\end{align*}
		Since $\lbrace v_j\rbrace$ is absolutely convergent, we find a contradiction to~\eqref{eqn:vertex diff} for sufficiently large $n$. It follows that $\ker T\cap \fone^{\perp}=\{ 0\}$. Since $\fone^{\perp}$ has codimension $1$ in $L^2(R,\mu)$, 
		the dimension of $\ker T$ cannot exceed $1$. Because $\ker T$ contains constants, they are
		the only elements of the kernel of $T$ as claimed.
	\end{proof}
	
	\begin{remark}
		In \cite{kroshnin2022infinite}, the authors showed for a homogeneous closed connected Riemannian manifold $X$ equipped with the canonical distance $d$ and Riemannian volume $\mu$, the constants are in the kernel of the MDS defining operator $T$ for $(X,d,\mu)$. The results on such smooth manifolds given by these authors appear to share similarities with the theorem above. However, in contrast to the Riemannian volume measure on homogeneous spaces, a probability measure with full support on a homogeneous countable metric space can never be homogeneous, i.e., invariant under automorphisms of the graph.
	\end{remark}

	\subsection{Signature ratio limit of i.i.d.\ generated finite subsets of the Rado graph}
	
	We examine the following question on the Rado graph. Let $(R_N)_{N\geq 1}$ be an increasing sequence of finite subsets of $R$ such that $R_N\nearrow R$, endowed the restriction of the metric $d$. Denote by $\Delta(R_N)$ the ratio of the number of positive eigenvalues over the number of negative eigenvalues of the associated matrix $S(R_N)$, that is,
	\begin{equation*}
		\Delta(R_N) \eqset \frac{s_+(S(R_N))}{s_-(S(R_N))}.
	\end{equation*}
	We ask whether the limit $\lim_{N\rightarrow \infty} \Delta(R_N)$ exists on the extended real line, and whether this limit is independent of the choice of $(R_N)_{N\geq 1}$. Note first that the Wigner semicircle law and the random construction of $R$ imply there exists a sequence $R_N\nearrow R$ such that $\lim_{N\rightarrow \infty} \Delta(R_N)=1$, see Section \ref{sec:mds op rado}. However, this is not true for all sequences, as the following example shows. 

	\begin{example}\label{ex_arRadoRatio1}
		Fix any point $R_1'=\{r_1\}$. By the finite extension property, if $R'_N\subset R$ is finite such that all pairs of vertices are adjacent, there exists $r_{N+1}\in R\setminus R_k'$ such that $R'_{N+1}=R'_{N}\cup \{r_{N+1} \}$ has all pairs of vertices adjacent. Thus, we obtain a strictly increasing sequence $(R'_N)_{N\geq 1}$ of finite subsets of $R$ such that the vertices of $R'_N$ are all adjacent for each $N\geq 1$. Each $R_N'$ with the metric induced by $d$ can be isometrically embedded into $\R^{N-1}$, but not in $\R^{N-2}$. By the 
		Proposition~\ref{prop_Hllbert1},
		we have $s_+(S(R_N'))=N-1$ and $s_-(S(R_N'))=1$ for this sequence. Note that $R\setminus\cup_{N=1}^{\infty} R'_N$ is infinite, i.e.\
		\[
		R\setminus\cup_{N=1}^{\infty} R'_N = \{t_1, t_2,\ldots \}.
		\]
		Define $R_N = R'_{N^2}\cup \{t_1,\ldots,t_N\}$, so we have $R_N\nearrow R$.
		using the Cauchy interlacing theorem, it is easy to show for the sequence $( R_N )_{N\geq 1}$ defined as above, we have
		$\lim_{N\rightarrow\infty} \Delta(R_N)=\infty$.
	\end{example}

	Thus we see the limit $\lim_{N\rightarrow\infty} \Delta(R_N)$ is not unique. Moreover, similarly one can show that it does not always exist on the extended real line.
	Although the limit of the ratios $\Delta(R_N)$ depends on the choice of $R_N\nearrow R$, they typically converge to $1$ in the following sense.
	
	\begin{proposition}
		\label{prop:random_rado_ratio}
		Let $G$ be a random Erd{\H o}s--R{\'e}nyi--Rado graph, $\mu$ be a probability measure on $\N$ with full support, and $\xi_i\in \N$,
		be i.i.d.\ random elements with $\operatorname{law}(\xi_i)=\mu$, $i\in \N$, independent of $G$.
		Then, for $X_m \eqset (\xi_i)_{i=1}^m$ one has
		\[
		\lim_{m \to \infty} \Delta(X_m) = 1
		\]
		almost surely.
	\end{proposition}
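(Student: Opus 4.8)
The plan is to reduce the claim, via a conditioning argument, to the Wigner semicircle law (Proposition~\ref{theorem:semicircle law}) in exactly the form already used in the proof of Theorem~\ref{theorem:rado sign}. Throughout, work on the almost sure event on which $G$ is isomorphic to the Rado graph, and let $\mathcal F \eqset \sigma(\xi_i : i\in\N)$; by hypothesis $\mathcal F$ is independent of $G$, hence of the i.i.d.\ Bernoulli family $(Z_{ij})_{i<j}$ from~\eqref{eqn:infinite Bernoulli matrix}. First I would dispose of two routine reductions. Since $\mu$ has full support on the infinite set $\N$, Lemma~\ref{lm_fullsupp1} gives $N(m)\eqset \#X_m\to\infty$ almost surely, and the sets $\hat X_{N(m)}$ exhaust $\N$; and since $s_\pm(S(\hat X_N))\ge 1$ for $N\ge2$ (Perron--Frobenius, as in the Remark after Theorem~\ref{th_spm1}), the ratio $\Delta(X_m)$ is well defined for all large $m$. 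By Lemma~\ref{lm_cancelrep_matrix1}, $s_\pm(S(X_m))=s_\pm(S(\hat X_{N(m)}))$, so it suffices to show $s_+(S(\hat X_{N(m)}))/s_-(S(\hat X_{N(m)}))\to1$ almost surely.

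Next I would set up the relevant infinite random matrix. Let $(\hat\xi_k)_{k\ge1}$ enumerate the distinct values of $(\xi_i)_{i\ge1}$ in order of first appearance; almost surely this is an $\mathcal F$-measurable bijection $\N\to\N$, and after $m$ samples the distinct values seen are precisely $\hat\xi_1,\dots,\hat\xi_{N(m)}$. Since $\diam(R,d)=2$, for $k\ne l$ one has $d(\hat\xi_k,\hat\xi_l)=1$ when $\hat\xi_k,\hat\xi_l$ are adjacent in $G$ and $d(\hat\xi_k,\hat\xi_l)=2$ otherwise, so the infinite symmetric matrix $M$ with $M_{kk}\eqset0$ and $M_{kl}\eqset-\tfrac12 d^2(\hat\xi_k,\hat\xi_l)=\tfrac32 Z_{\hat\xi_k\hat\xi_l}-2$ is of the same type as the matrices in~\eqref{eq_SNRado1}, and its order-$N$ upper-left block coincides, up to a simultaneous permutation of rows and columns, with $S(\hat X_N)$ for $\hat X_N\eqset\{\hat\xi_1,\dots,\hat\xi_N\}$, hence has the same signature. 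The key observation is that, conditionally on $\mathcal F$, the map $k<l\mapsto\{\hat\xi_k,\hat\xi_l\}$ is injective on pairs while $(Z_{ij})$ is i.i.d.\ and independent of $\mathcal F$; therefore $(Z_{\hat\xi_k\hat\xi_l})_{k<l}$ is itself an i.i.d.\ $\operatorname{Be}(p)$ family, and the conditional law of $M$ does not depend on the realization of $\mathcal F$. Consequently $M$ is an infinite symmetric matrix with i.i.d.\ (constant $0$) diagonal and i.i.d.\ off-diagonal entries of variance $\tfrac94 p(1-p)>0$, i.e.\ it satisfies the hypotheses of Proposition~\ref{theorem:semicircle law}.

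With this in hand the conclusion follows: by Proposition~\ref{theorem:semicircle law} and the consequences displayed immediately after it, almost surely $s_\pm$ of the order-$N$ principal submatrices of $M$ tend to $\infty$ and their ratio tends to $1$ as $N\to\infty$. Evaluating along the random indices $N(m)$, which tend to $\infty$ almost surely by the first reduction, gives $s_+(S(\hat X_{N(m)}))/s_-(S(\hat X_{N(m)}))\to1$, and Lemma~\ref{lm_cancelrep_matrix1} then yields $\Delta(X_m)\to1$ almost surely.

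I expect the conditioning step in the second paragraph to be the main obstacle, or at least the only genuinely non-routine point: one must argue that selecting vertices of $R$ by a rule independent of the random edge set preserves the Erd{\H o}s--R{\'e}nyi structure on the induced subgraph, so that the associated infinite matrix really does have i.i.d.\ off-diagonal entries and the semicircle law applies. Everything else---handling repeated samples through Lemma~\ref{lm_cancelrep_matrix1}, the divergence $N(m)\to\infty$, and the permutation-invariance of signatures---is bookkeeping. One should also note, as in the proof of Theorem~\ref{theorem:rado sign}, that the common off-diagonal mean $\tfrac32 p-2$ is nonzero, so after normalization a single outlier eigenvalue escapes the bulk; this perturbs neither the weak convergence of the empirical spectral distribution nor the limit of the signature ratio.
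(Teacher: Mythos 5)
Your proof is correct and follows essentially the same route as the paper's: both condition on the $\sigma$-algebra generated by $(\xi_i)$, use independence to conclude that the conditional law of the sampled submatrix has i.i.d.\ Bernoulli-affine entries, invoke the Wigner semicircle law (Proposition~\ref{theorem:semicircle law}), integrate out the conditioning (Fubini), and handle repeated samples via Lemma~\ref{lm_cancelrep_matrix1}. The only cosmetic difference is that you phrase the key step as the injection $(k,l)\mapsto\{\hat\xi_k,\hat\xi_l\}$ preserving the i.i.d.\ structure, while the paper phrases it as the conditional law of $G$ given $\xi$ being the marginal law; your closing remark about the nonzero off-diagonal mean is a valid observation but is already subsumed by the cited form of the semicircle law, which does not require centering.
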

	
	\begin{proof}
		Since $\mu$ has an infinite support, a random sequence $\xi = (\xi_i)_{i=1}^\infty$ a.s.\ has infinitely many distinct elements. 
		For such a realization of $\xi$, define an auxiliary sequence $\hat\xi = (\hat \xi_i)_{i=1}^{\infty} \subset \N$ which is obtained from $\xi$ by cancelling the repeating elements in the order of their appearance.
		Since $G$ and $\xi$ are independent, the conditional law of $G$ given $\xi$ (uniquely defined for a.e.\ $\xi$) does not depend on $\xi$ and coincides with the marginal law of $G$, that is, the distributional law of an Erd{\H o}s--R{\'e}nyi--Rado random graph.
		The matrices $\hat{S}_N$ corresponding to $S_N$ (defined by~\eqref{eq_SNRado1}) are given by the formulae
		\[
		(\hat{S}_{N})_{ij} \eqset \dfrac{3}{2} \mathcal{A}_{\hat \xi_i, \hat \xi_j} - 2,\; (\hat{S}_{N})_{ii} = 0,\quad 1 \le i \neq j \le N.
		\]
		Since all $\hat \xi_i$ are by definition distinct, by the construction of an Erd{\H o}s--R{\'e}nyi--Rado graph and Proposition~\ref{theorem:semicircle law} we obtain that ESDs of $\hat{S}_{N}$ converge to the semicircle law a.s.\ (conditioned on $\xi$, i.e.\ with $\xi$ and hence also $\hat \xi$ fixed). Therefore,
		\[
		\lim_{N \to \infty} \frac{s_+(\hat{S}_N)}{s_-(\hat{S}_N)} = 1,
		\]
		and by the Fubini theorem, this holds (unconditionally) a.s.
		
		Finally, by the properties of the Rado graph, one a.s.\ has $(\hat{S}_{N})_{ij} = -\frac 1 2 d^2_G(\hat\xi_i, \hat \xi_j)$, i.e.\ $\hat{S}_{N} = S(\hat{X}_N)$, where $\hat{X}_N \eqset \{\hat \xi_i\}_{i=1}^N$. Then by Lemma~\ref{lm_cancelrep_matrix1} we get
		\[
		\lim_{m \to \infty} \Delta(X_m) = \lim_{N \to \infty} \Delta(\hat{X}_N) = \lim_{N \to \infty} \frac{s_+(S(\hat{X}_N))}{s_-(S(\hat{X}_N))} = \lim_{N \to \infty} \frac{s_+(\hat{S}_N)}{s_-(\hat{S}_N)} = 1 \text{ a.s.}
		\]
		as claimed.
	\end{proof}
	
	
	The above Proposition~\ref{prop:random_rado_ratio} shows that the limit of the ratios $\lim_{m\rightarrow\infty} \Delta\left((\xi_i)_{i=1}^m\right)$
	is equal to $1$ almost surely for almost every realization of the random Rado graph $G$, once the random points $\xi_i$ in the Gromov--Vershik scheme are independent of $G$. This is however not the case when we fix a specific realization of the Rado graph as the following examples show. In fact, we show that 
	the limit of the ratios $\lim_{m\rightarrow\infty} \Delta\left((\xi_i)_{i=1}^m\right)$ may become as large as desired (Example~\ref{ex_limratio_rado1}) or even infinity (Example~\ref{ex_Rado2}), all this of course, with some positive probability and depending on the choice of the measure $\mu$.
	
	\begin{example}\label{ex_limratio_rado1}
		Let $(R,d)$ be as before. Let $E \eqset \lbrace e_1, e_2,\ldots\rbrace \subset R$ be the infinite Hilbertian simplex, i.e.,\
		$d(e_i,e_j)=\delta_{ij}$ (we call it Hilbertian since $(E,d)$ can be isometrically embedded in the Hilbert space $\ell^2$).
		The set $E_0 \eqset R\setminus E$ is clearly infinite, and we enumerate its elements so that
		\[
		E_0 = \lbrace e_1^0,e_2^0,\ldots\rbrace.
		\] 
		Fix any integer $j\geq 1$. For $k=1,\ldots, j$, define
		\begin{align*}
			E_k\eqset \lbrace e^k_l\eqset e_{lj+k}: l\in \mathbb{N}\rbrace,
		\end{align*}
		so that $E=\bigcup_{k=1}^j E_k$, and the sets $E_k$, $k=1,\ldots, j$, are disjoint.
		We construct a probability measure $\mu$ of full support on $R$ such that $\mu(\{e^0_l\})=\mu(\{e^k_l\})$ 
		for all $l\geq 0$ and $k=1,\ldots, j$. Note that in this way
		\begin{equation}\label{eq_muVEk}
			\mu(E_0)=\mu(E_1)=\ldots= \mu(E_j) = \frac{1}{j+1}
		\end{equation}
		Let $\xi=(\xi_i)_{i\geq 1}$ with $\xi_i \colon \Omega\rightarrow R$ be the i.i.d.\ random sequence from $(R,d,\mu)$. 
		For each $k=0,\ldots, j$, denote by 
		$N_{E_k}(m)$ the number of \textit{distinct} elements in $E_k$ of the 
		finite random sequence $(\xi_1,\xi_2,\ldots,\xi_m)$.
		
		We show first that for $p_j\eqset 1/(j+1)>0$, one has
		\begin{equation}\label{eq_pj2}
			\P\left(\left\{N_{E_0}(m,\omega)\leq N_{E_k}(m,\omega), k=1,\ldots, j\right\}\right) \geq p_j
		\end{equation}
		for all $m\in \N$.
		In fact, setting
		\[
		\Omega_k\eqset \left\{\omega\in\Omega : N_{E_k}(m,\omega)\leq N_{E_i}(m,\omega), i=0,\ldots, j\right\},
		\]
		we clearly get
		\[
		\P(\Omega_0)= \P(\Omega_1)= \ldots =\P(\Omega_j),
		\]
		and $\bigcup_{k=0}^j \Omega_k = \Omega$.
		Therefore, 
		\[
		1= \P(\Omega)\leq \sum_{k=0}^j \P(\Omega_k)= (j+1) \P(\Omega_0),
		\]
		hence the claim~\eqref{eq_pj2}.

		Once~\eqref{eq_pj2} is proven, we consider the set $\hat {R}_{N(m)}$ of distinct elements
		of the finite random sequence $(\xi_i)_{i=1}^m$, and the respective square matrix
		$S({\hat {R}_{N(m)}})$ as well as the square matrices 
		\begin{align*}
			\hat S_{N_{(E_0,m)}}^0 &\eqset -\frac{1}{2} (d^2(\xi_i, \xi_j))_{i,j=1,\ldots, m, \xi_i, \xi_j\in E_0\cap \hat {R}_{N(m)}},\\
			\hat E_{N_E(m)} &\eqset -\frac{1}{2} (d^2(\xi_i, \xi_j))_{i,j=1,\ldots, m, \xi_i, \xi_j\in E\cap \hat {R}_{N(m)}},
		\end{align*}
		where the lower index stands for the size of the respective matrix.
		One has
		\begin{align*}
			s_+\left(E_{N_E(m)}\right) &= N_E(m)- 1, 
		\end{align*}
		since $E_{N_E(m)}$ is $N_E(m)\times N_E(m)$ matrix and $(E,d)$ is isometrically embedded in a Hilbert space.
		Hence 
		\begin{equation}\label{eq_sEm1}
			s_+\left(\hat{E}_{N_E(m)}\right)= N_E(m)-1.
		\end{equation}
		We observe now that the matrix $S({\hat {R}_{N(m)}})$ has the block form 
		\begin{align*}
			S({\hat {R}_{N(m)}})=
			\left(\begin{matrix}
				\hat S_{N_{E_0}(m)}^0 && *\\
				* && \hat E_{N_E(m)} 
			\end{matrix}
			\right)
		\end{align*}
		(where $*$ stands for some blocks).
		Thus in view of the Cauchy interlacing theorem, we get
		\begin{equation}\label{eq_sXm1}
			s_+(S({\hat {R}_{N(m)}}))\geq s_+(\hat E_{N_E(m)} ) = N_E(m)-1, 
		\end{equation}
		the latter equality due to~\eqref{eq_sEm1}.
		On the other hand, $S({\hat {R}_{N(m)}})$ is an $N(m)\times N(m)$ matrix, and hence
		\begin{equation}\label{eq_sXm2}
			\begin{aligned}
				s_-(S({\hat {R}_{N(m)}})) &\leq N(m)- 	s_+(S({\hat {R}_{N(m)}}))\\
				& \leq N(m)-(N_E(m)-1) \qquad \mbox{by~\eqref {eq_sXm1}}\\
				& = N_{E_0}(m)+1.
			\end{aligned}
		\end{equation}
		From~\eqref{eq_sEm1} by~\eqref{eq_pj2}, of probability at least $p_j>0$, one has
		\begin{equation}\label{eq_sEm1b}
			s_+\left(\hat{E}_{N_E(m)}\right)=N_E(m)-1=\sum_{k=1}^j N_{E_k}(m)-1\geq jN_{E_0}(m)-1
		\end{equation}
		for sufficiently large $m$. Hence, combining~\eqref{eq_sXm2},~\eqref{eq_sEm1b}, and~\eqref{eq_sXm1}, we arrive at
		\begin{align}
			\Delta(\hat {R}_{N(m)})\geq \dfrac{ j N_{E_0}(m)-1}{ N_{E_0}(m)+1} \geq \dfrac{j}{3}.
		\end{align}
		Hence, for any $j\geq 1$, we have
		\begin{align*}
			\liminf_{m\rightarrow \infty} \P\left(\left\{\Delta(\hat{R}_{N(m)}) \geq \dfrac{j}{3}\right\}\right) \geq p_j>0.
		\end{align*}
		Setting $j$ sufficiently large, we may have $\lim_{m\rightarrow\infty} \Delta\left((\xi_i)_{i=1}^m\right)$ as large as desired with a positive probability.
	\end{example}

	\begin{example}\label{ex_Rado2}
		Let $(R,d)$ be as before. 
		According to the Example~\ref{ex_arRadoRatio1}, there exists an enumeration $R = \{r_i\}_{i=1}^\infty$ such that $\Delta(R_N) \to \infty$ with $R_N \eqset \{r_1, \dots, r_N\}$.
		For the sake of brevity, let us identify $R$ with $\N$ according to this enumeration.
		Define a probability measure $\mu$ on $\N$ by $\mu(\{k\}) \eqset C 2^{-k^2}$ with the normalizing constant $C \eqset \left(\sum_{k=1}^\infty 2^{-k^2}\right)^{-1}$.
		As usual, let $(\xi_i)_{i=1}^\infty$ be a sequence of i.i.d.\ random elements of $R$ with law $\mu$ and $(\hat{\xi}_j)_{j=1}^\infty$ be the corresponding sequence obtained from $\xi$ by cancelling the repeating elements in the order of their appearance.
		It is easy to see that $\hat{\xi}_i = i$ for all $i \in \N$ if and only if $\xi_j \le \max_{1 \le i < j} \xi_i + 1$ for all $j \in \N$, with a convention $\max \emptyset = 0$.
		Furthermore,
		\begin{equation}\label{eq_Pest1}
			\begin{aligned}
				\P\left(\left\{\exists j \in \N : \xi_j > \max_{1 \le i < j} \xi_i + 1\right\}\right)
				&\le \sum_{j=1}^\infty \P\left(\left\{\xi_j > \max_{1 \le i < j} \xi_i + 1\right\}\right) \\
				&= \sum_{j=1}^\infty \sum_{k = 0}^\infty \P\left(\left\{\xi_j = k + 2, \, \max_{1 \le i < j} \xi_i \le k\right\}\right) .
			\end{aligned}
		\end{equation}	
		Since all $\xi_i$ are i.i.d., we have (with convention $0^0 = 1$ in the case $k = 0$, $j = 1$)
		\[
		\P\left(\left\{\xi_j = k + 2, \, \max_{1 \le i < j} \xi_i \le k\right\}\right)
		= \P(\{\xi_1 = k + 2\}) \bigl(\P(\{\xi_1 \le k\})\bigr)^{j-1} .
		\]
		Note that 
		\[
		\P(\{\xi_1 \le k\}) = 1 - \P(\{\xi_1 \ge k+1\}) \le 1 - \P(\{\xi_1 = k+1\}) = 1 - \mu(\{k+1\}). 
		\]
		Thus, we continue the chain of estimates~\eqref{eq_Pest1} by
		\begin{align*}
			\P\left(\left\{\exists j \in \N : \xi_j > \max_{1 \le i < j} \xi_i + 1\right\}\right)
			&\le \sum_{j=1}^\infty \sum_{k = 0}^\infty \mu(\{k+2\}) \bigl(1 - \mu(\{k+1\})\bigr)^{j-1} \\
			&= \sum_{k = 0}^\infty \mu(\{k+2\}) \sum_{j=1}^\infty \bigl(1 - \mu(\{k+1\})\bigr)^{j-1} \\
			&= \sum_{k = 0}^\infty \frac{\mu(\{k+2\})}{\mu(\{k+1\})} = \sum_{k = 0}^\infty 2^{(k+1)^2 - (k+2)^2} \\
			&= \sum_{k = 0}^\infty 2^{-2 k - 3} = \frac{1}{6} .
		\end{align*}
		Therefore, with probability at least $\frac{5}{6}$, one has $\xi_j \le \max_{1 \le i < j} \xi_i + 1$ for all $j \in \N$, hence $\hat{\xi}_i \equiv i$, and thus
		\[
		\lim_{m\to\infty} \Delta\bigl((\xi_i)_{i=1}^m\bigr) = \lim_{N\to\infty} \Delta\bigl((\hat{\xi}_i)_{i=1}^N\bigr) 
		= \lim_{N\to\infty} \Delta(R_N) = \infty .
		\]
	\end{example}

	\appendix
	\section{Auxiliary lemmas on operators and their spectra}
	
	We collect here some technical results on the spectra of operators on Hilbert spaces used throughout the paper.
	
	\begin{lemma}\label{lem:sign_comparison}
		Let $H$ be a Hilbert space.
		\begin{itemize}
			\item[(i)] If $A$, $B$ are compact self-adjoint operators on $H$, then
			\begin{equation*}
				s_\pm(A + B) \le s_\pm(A) + s_\pm(B) .
			\end{equation*}
			\item[(ii)] If $A$ and $\{A_n\}_{n \ge 1}$, are compact self-adjoint operators on $H$ such that $A_n \to A$ in operator norm, then
			\begin{equation*}
				s_\pm(A) \le \liminf s_\pm(A_n) .
			\end{equation*}
		\end{itemize}
	\end{lemma}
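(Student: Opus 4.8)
The plan is to deduce both parts from the variational description of the positive spectrum of a compact self-adjoint operator. For such an operator $C$ on $H$, let $H_+(C)$ denote the closed linear span of the eigenvectors with strictly positive eigenvalues, so that $\dim H_+(C) = s_+(C)$ (with the convention that this dimension is $+\infty$ when there are infinitely many positive eigenvalues), the quadratic form $x \mapsto (Cx,x)$ is strictly positive on $H_+(C) \setminus \{0\}$, and it is non-positive on $H_+(C)^\perp$ (which is spanned by $\ker C$ together with the eigenvectors of negative eigenvalues). Two elementary consequences will be used throughout. \emph{(a)} If $W \subseteq H$ is a closed subspace with $(Cx,x) \le 0$ for all $x \in W$, then $H_+(C) \cap W = \{0\}$, so the orthogonal projection onto $W^\perp$ embeds $H_+(C)$ injectively into $W^\perp$, giving $s_+(C) \le \operatorname{codim} W$. \emph{(b)} Dually, if $V \subseteq H$ is a subspace with $(Cx,x) > 0$ for all $x \in V \setminus \{0\}$, then $V \cap H_+(C)^\perp = \{0\}$, hence $\dim V \le \dim H_+(C) = s_+(C)$. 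The assertions for $s_-$ will follow by applying those for $s_+$ to $-A$, $-B$, $-A_n$, and $-A$.

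For part~(i), I would set $W := H_+(A)^\perp \cap H_+(B)^\perp$, a closed subspace on which $(Ax,x) \le 0$ and $(Bx,x) \le 0$ simultaneously, hence $((A+B)x, x) \le 0$. By \emph{(a)} applied to $C = A+B$ we get $s_+(A+B) \le \operatorname{codim} W$. Finally, $\operatorname{codim} W \le \operatorname{codim} H_+(A)^\perp + \operatorname{codim} H_+(B)^\perp = s_+(A) + s_+(B)$: this is trivial if either $s_+(A)$ or $s_+(B)$ is infinite, and when both are finite it follows from $W^\perp = H_+(A) + H_+(B)$, which is then finite-dimensional. This yields $s_+(A+B) \le s_+(A) + s_+(B)$.

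For part~(ii), fix any $k \in \N$ with $k \le s_+(A)$; it suffices to prove that $s_+(A_n) \ge k$ for all sufficiently large $n$, since then $\liminf_n s_+(A_n) \ge k$ for every admissible $k$, and letting $k \uparrow s_+(A)$ (the case $s_+(A) = +\infty$ included) gives the claim. Since $k \le s_+(A)$, there is a $k$-dimensional subspace $V \subseteq H_+(A)$ and a constant $c > 0$ (one may take $c$ to be the $k$-th largest positive eigenvalue of $A$) with $(Ax,x) \ge c \|x\|^2$ for all $x \in V$. For $n$ large enough that $\|A_n - A\| < c$ one has $(A_n x, x) \ge (Ax,x) - \|A_n - A\|\,\|x\|^2 \ge (c - \|A_n - A\|)\|x\|^2 > 0$ for every $x \in V \setminus \{0\}$, so \emph{(b)} applied to $C = A_n$ gives $s_+(A_n) \ge \dim V = k$, as desired.

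I do not anticipate a genuine obstacle here; the only points needing care are the bookkeeping with possibly infinite-dimensional subspaces and infinite values of $s_\pm$ in part~(i) (handled by treating the infinite case separately, where the inequality is vacuous), and making sure the variational facts \emph{(a)} and \emph{(b)} are invoked in the correct direction in each half of the argument.
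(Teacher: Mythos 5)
Your proof is correct and takes essentially the same approach as the paper's: for (i) both arguments hinge on the fact that the quadratic form of $A+B$ is nonpositive on the common negative cone of $A$ and $B$ (your direct use of $W = H_+(A)^\perp \cap H_+(B)^\perp$ is the contrapositive of the paper's contradiction argument), and for (ii) both perturb the quadratic form on the span of the top $k$ eigenvectors using the $k$-th largest eigenvalue as the stability threshold. Your variational facts (a) and (b) are a clean packaging of exactly what the paper uses implicitly.
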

	
	\begin{proof}
		We prove the claims for $s_+$. The proof for $s_-$ is identical.
		
		We start with the proof of (i). Assume that $s_+(A)<\infty$ and $s_+(B) < \infty$, otherwise the claim is trivial. Suppose that $s_+(A + B) > s_+(A) + s_+(B)$. 
		Then there are at least $m+1 = s_+(A) + s_+(B) + 1$ linearly independent eigenvectors $u_1, \dots, u_m$ of $A + B$ corresponding to positive eigenvalues.
		Let $L_A$ (resp., $L_B$) be the linear span of all eigenvectors of $A$ (resp.,\ $B$) corresponding to positive eigenvalues. Clearly, we have that $\dim L_A = s_+(A)$ and $\dim L_B = s_+(B)$. 
		Then $\dim(L_A + L_B) \le s_A(A) + s_+(B) < m$ and there is a non-zero vector $u \in \Span \{u_i\}_{i=1}^m \cap (L_A + L_B)^\perp$. Since $A$ and $B$ are self-adjoint, this yields $(u, A u) \leq 0$, $(u, B u) \le 0$, hence $(u, (A + B) u) \le 0$ what leads to a contradiction. Therefore, $s_+(A + B) \le s_+(A) + s_+(B)$ as claimed.

		Now, let us prove (ii). Let $u_1, \dots, u_m$ be unit norm eigenvectors of $A$ corresponding to positive eigenvalues $\lambda_1 \ge \lambda_2 \ge \dots \ge \lambda_m > 0$. Define symmetric matrices $\bar{A} \eqset ((u_i, A u_j))_{i,j=1}^m = \diag(\lambda_1, \dots, \lambda_m)$ and $\bar{A}_n \eqset ((u_i, A_n u_j))_{i,j=1}^m$. Then for $\lVert A_n - A\rVert < \lambda_m$, we have
		\[
		\lambda_{\min}(\bar{A}_n) \ge \lambda_{\min}(\bar{A}) + \lambda_{\min}(\bar{A}_n - \bar{A}) 
		\ge \lambda_m - \lVert\bar{A}_n - \bar{A}\rVert \ge \lambda_m - \lVert A_n - A\rVert > 0 .
		\]
		In this case, $s_+(A_n) \ge s_+(\bar{A}_n) \ge m$, therefore, 
		\[
		m \le \liminf_n s_+(A_n).
		\]
		Taking supremum over all $m\leq s_+(A)$, we get the result. 
	\end{proof}

	\begin{lemma}\label{lm_lamkT2a}
		Let $(X,d)$ be a metric space, $\mu$ be a Borel probability measure, and
		$A\colon L^2(X,\mu)\to L^2(X,\mu)$ be an integral linear compact operator defined by the formula
		\[
		(Au)(x) \eqset \int_X a(x,y) u(y)\, d\mu(y)
		\] 
		with $a\in L^2(X\times X, \mu\otimes \mu)$ satisfying $a(x,y)=a(y,x)$ for $ \mu\otimes \mu$-a.e.\ $(x,y)\in X\times X$, so that
		$A$ is self-adjoint. 
		Let $\lambda_i\in \R$ and $u_i\in L^2(X,\mu)$ be an enumeration of its eigenvalues (counting multiplicities) and the respective eigenfunctions of unit norm in $L^2(X,\mu)$. 
		Then 
		\begin{equation}\label{eq_lamkT2a}
			a(x,y) = \sum_i \lambda_i u_i(x) u_i(y) ,
		\end{equation}
		the equality in the sense of $L^2(X\times X,\mu\otimes\mu)$.
	\end{lemma}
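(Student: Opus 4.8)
The plan is to recognize this as the standard kernel expansion for a Hilbert--Schmidt operator and to establish it by computing the Fourier coefficients of $a$ with respect to a tensor-product orthonormal basis of $L^2(X\times X,\mu\otimes\mu)$. Only $L^2$-convergence is claimed, so no positivity or continuity (hence no Mercer-type argument) is needed; the spectral theorem for compact self-adjoint operators together with Parseval's identity suffices.

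First I would observe that, since $a\in L^2(X\times X,\mu\otimes\mu)$, the operator $A$ is Hilbert--Schmidt (in particular compact), and since $a$ is symmetric $\mu\otimes\mu$-a.e., $A$ is self-adjoint. We may assume without loss of generality that all $\lambda_i\neq 0$, since the zero terms do not contribute to the right-hand side of~\eqref{eq_lamkT2a}; then the spectral theorem gives that $\{u_i\}$ is an orthonormal system spanning $(\ker A)^\perp$ in $L^2(X,\mu)$. Fix an orthonormal basis $\{e_\alpha\}$ of $\ker A$, so that $\{u_i\}\cup\{e_\alpha\}$ is an orthonormal basis of $L^2(X,\mu)$. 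The corresponding pairwise tensor products then form an orthonormal basis of $L^2(X\times X,\mu\otimes\mu)$, using the standard identification $L^2(X,\mu)\otimes L^2(X,\mu)\cong L^2(X\times X,\mu\otimes\mu)$.

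Next I would compute the Fourier coefficients of $a$ against this basis. For any two basis elements $f,g$ one has
\[
\int_X\int_X a(x,y)\,f(x)\,g(y)\,d\mu(x)\,d\mu(y) = (Ag,f)_{L^2(X,\mu)}.
\]
Taking $f=u_i$, $g=u_j$ gives $(Au_j,u_i)=\lambda_j\delta_{ij}$; taking $g=e_\alpha$ gives $(Ae_\alpha,f)=0$ for every $f$; and taking $f=e_\alpha$ gives $0$ as well, because by the symmetry of $a$ one may swap the roles of the two variables, so that the coefficient against $e_\alpha\otimes g$ equals the one against $g\otimes e_\alpha$, namely $(Ae_\alpha,g)=0$. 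Hence the only nonvanishing Fourier coefficients of $a$ are those along the diagonal pairs $u_i\otimes u_i$, each equal to $\lambda_i$.

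Finally, since $a\in L^2(X\times X,\mu\otimes\mu)$, its expansion in the chosen orthonormal basis converges to $a$ in $L^2$, and by the coefficient computation this expansion is exactly $\sum_i\lambda_i\,u_i(x)u_i(y)$, which is~\eqref{eq_lamkT2a}. As a consistency check, Parseval's identity yields $\|a\|_{L^2(X\times X,\mu\otimes\mu)}^2=\sum_i\lambda_i^2$, i.e.\ the squared Hilbert--Schmidt norm of $A$, which also re-confirms the convergence. I do not expect a genuine obstacle here; the only point requiring a little care is bookkeeping the contribution of $\ker A$, which is dispatched by completing $\{u_i\}$ to an orthonormal basis and invoking the symmetry of $a$ to annihilate the mixed coefficients.
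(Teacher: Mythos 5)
Your proposal is correct and follows essentially the same route as the paper: expand the kernel $a$ in the tensor-product orthonormal basis of $L^2(X\times X,\mu\otimes\mu)$ and compute the Fourier coefficients as $(Au_j,u_i)=\lambda_i\delta_{ij}$. The only (minor) difference is that you explicitly complete $\{u_i\}$ with an orthonormal basis of $\ker A$ and check the mixed coefficients vanish, whereas the paper tacitly treats $\{u_i\}$ (including eigenfunctions for the zero eigenvalue) as already a complete basis and invokes the Hilbert--Schmidt theorem directly.
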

	
	\begin{proof}
		By the Hilbert--Schmidt theorem, $\{u_i\}$ form a topological basis in $L^2(X,\mu)$. Thus 
		representing $a$ in terms of the Fourier series in $u_i\otimes u_j$ in $L^2(X\times X,\mu\otimes\mu)$,
		we get
		\[
		a(x,y)= \sum_{i,j} c_{ij} u_i(x) u_i(y).
		\]
		For the Fourier coefficients $c_{ij}$, we have
		\begin{align*}
			c_{ij}&=\int_X \left(\int_X a(x,y)u_i(u)\,d\mu(y) \right)u_j(x)\,d\mu(x) = \int_X \lambda_i u_i(x)u_j(x)\,d\mu(x)\\
			&= \lambda_i \delta_{ij},
		\end{align*}
		where $\delta_{ij}$ stands for the Kronecker delta, which implies~\eqref{eq_lamkT2a}.
	\end{proof}
	
	The following lemmas have been used in the proof of Proposition~\ref{prop:sign convergence2_sep}.
	
	\begin{lemma}\label{lm_approxLinOp1}
		Let $(X,d)$ be a metric space, $\mu$ be a Borel probability measure, and
		$A\colon L^2(X,\mu)\to L^2(X,\mu)$ be an integral linear operator defined by the formula
		\[
		(Au)(x) \eqset \int_X a(x,y) u(y)\, d\mu(y)
		\] 
		with a continuous function $a\in L^2(X\times X, \mu\otimes \mu)$ satisfying $a(x,y)=a(y,x)$. The following assertions hold. 
		\begin{itemize}
			\item[(i)] For every finite subset $X_{N} = \{x_1,\ldots,x_N\} \subset \supp \mu$, one has
			\begin{eqnarray}
				\label{eq_spmA1}
				s_\pm(A) \geq s_\pm(A(X_N)), \\
				\label{eq_spmP1}
				s_\pm(P_\mu A P_\mu) \geq s_\pm(\Pi_N A(X_N) \Pi_N),
			\end{eqnarray}	
			where $A(X_N)\eqset (a(x_i, x_j))_{i,j=1}^N \in \R^{N \times N}$.
			\item[(ii)] If $\mu$ is a Radon measure (which is automatically guaranteed when $(X,d)$ is Polish), then there is a sequence of finite subsets $X_N\subset X$ such that
			\begin{eqnarray}
				\label{eq_spmA2}		
				s_\pm (A) \leq \liminf_N s_\pm (A(X_N)), \\\
				\label{eq_spmP2}
				s_\pm(P_\mu A P_\mu) \leq \liminf_N s_\pm(\Pi_N A(X_N) \Pi_N).
			\end{eqnarray}
		\end{itemize}
	\end{lemma}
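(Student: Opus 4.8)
The plan is to prove (i) by compressing $A$ and $P_\mu A P_\mu$ to cleverly chosen finite-dimensional subspaces of $L^2(X,\mu)$ whose associated matrices converge, as a scale parameter shrinks, to $A(X_N)$ and to $\Pi_N A(X_N)\Pi_N$ respectively; and to prove (ii) by approximating $A$ in operator norm by ``block-constant'' integral operators coming from a discretization of $\mu$, then transferring signatures via Lemma~\ref{lem:sign_comparison}(ii). Since $a\in L^2(X\times X,\mu\otimes\mu)$, both $A$ and $P_\mu A P_\mu$ are Hilbert--Schmidt, hence compact and self-adjoint, so I will freely use the min--max identity $s_+(A)=\sup\{\dim W : (w,Aw)>0\ \forall w\in W\setminus\{0\}\}$ together with its consequence $s_\pm(P_V A P_V|_V)\le s_\pm(A)$ for every closed subspace $V$.

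For \eqref{eq_spmA1} I would fix $X_N=\{x_1,\dots,x_N\}\subset\supp\mu$ and, for small $r>0$, the pairwise disjoint open balls $B_i^r\eqset B_r(x_i)$ (each of positive measure since $x_i\in\supp\mu$), and set $e_i^r\eqset\fone_{B_i^r}/\sqrt{\mu(B_i^r)}$. These are orthonormal, and by continuity of $a$ the entry $(e_i^r,Ae_j^r)$ --- a double average of $a$ over $B_i^r\times B_j^r$ --- tends to $a(x_i,x_j)$ as $r\to0$. Hence $s_\pm(A)\ge s_\pm\bigl(((e_i^r,Ae_j^r))_{i,j=1}^N\bigr)$, and passing to the limit through Lemma~\ref{lem:sign_comparison}(ii) in $\R^N$ gives $s_\pm(A)\ge s_\pm(A(X_N))$. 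For \eqref{eq_spmP1} I would use instead the functions $h_i^r\eqset\fone_{B_i^r}/\mu(B_i^r)-\fone_{B_1^r}/\mu(B_1^r)$ for $i=2,\dots,N$: they are linearly independent and lie in $\fone^\perp$ (each has zero $\mu$-integral), so $(h_i^r,P_\mu A P_\mu h_j^r)=(h_i^r,Ah_j^r)$ is a fixed linear combination of double averages of $a$ over products of two balls, converging to $a(x_i,x_j)-a(x_i,x_1)-a(x_1,x_j)+a(x_1,x_1)$. This limiting $(N-1)\times(N-1)$ matrix is precisely the matrix, in the basis $\{e_i-e_1\}_{i=2}^N$ of $\vone^\perp\subset\R^N$, of the form $t\mapsto t^{T}A(X_N)t$ restricted to $\vone^\perp$, hence has signature $s_\pm(\Pi_N A(X_N)\Pi_N)$; so again $s_\pm(P_\mu A P_\mu)\ge s_\pm(\Pi_N A(X_N)\Pi_N)$.

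For (ii) I would use tightness of the Radon measure $\mu$: choose compacts $K_N$ with $\mu(X\setminus K_N)\to0$, partition $K_N$ into finitely many Borel sets $E_1^N,\dots$ of diameter $<1/N$ and positive measure, put $E_0^N\eqset X\setminus\bigcup_{i\ge1}E_i^N$, and pick representatives $x_i^N\in E_i^N\cap\supp\mu$, choosing $x_0^N$ so that $\mu(E_0^N)\int_X a(x_0^N,y)^2\,d\mu(y)\to0$ (possible since $\int_{X\setminus K_N}\!\int_X a^2\to0$ by absolute continuity of the integral). Let $A_N$ be the integral operator with kernel $a_N(x,y)\eqset a(x_{i(x)}^N,x_{j(y)}^N)$, constant on each block $E_i^N\times E_j^N$; then $\|A_N-A\|_{\mathrm{op}}\le\|a_N-a\|_{L^2(\mu\otimes\mu)}\to0$ (uniform convergence on $K_N\times K_N$, negligible contribution from blocks meeting $E_0^N$), so Lemma~\ref{lem:sign_comparison}(ii) yields $s_\pm(A)\le\liminf_N s_\pm(A_N)$ and $s_\pm(P_\mu A P_\mu)\le\liminf_N s_\pm(P_\mu A_N P_\mu)$. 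It then remains to check, with $X_N\eqset\{x_i^N\}$, that $s_\pm(A_N)=s_\pm(A(X_N))$ and $s_\pm(P_\mu A_N P_\mu)=s_\pm(\Pi_N A(X_N)\Pi_N)$: writing $V_N\eqset\Span\{\fone_{E_i^N}\}$ and $D\eqset\diag(\mu(E_i^N))$, one sees $A_N$ vanishes on $V_N^\perp$ and, in the orthonormal basis $e_i^N\eqset\fone_{E_i^N}/\sqrt{\mu(E_i^N)}$, has matrix $D^{1/2}A(X_N)D^{1/2}$ on $V_N$, whence $s_\pm(A_N)=s_\pm(A(X_N))$ by Sylvester's law of inertia; and since $\fone=\sum_i\fone_{E_i^N}\in V_N$, the hyperplane $W_N\eqset P_\mu V_N$ of $V_N$ carries the whole nonzero part of $P_\mu A_N P_\mu$, its compression there being the restriction of $D^{1/2}A(X_N)D^{1/2}$ to $\{s:\langle s,d^{1/2}\rangle=0\}$ with $d\eqset(\mu(E_i^N))_i$, which the congruence $s\mapsto D^{1/2}s$ maps onto $A(X_N)|_{\vone^\perp}$, so $s_\pm(P_\mu A_N P_\mu)=s_\pm(\Pi_N A(X_N)\Pi_N)$.

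The hard part is (ii): because $\supp\mu$ need not be totally bounded it cannot be partitioned into finitely many small pieces, which forces the ``tail block'' $E_0^N$ and the delicate choice of its representative --- here Radon-ness (tightness together with absolute continuity of the integral) is exactly what makes the construction go through --- and afterwards one must keep exact track of the weights $\mu(E_i^N)$ so that the congruence by $\diag(\mu(E_i^N))^{1/2}$ turns $A_N$ and $P_\mu A_N P_\mu$ into $A(X_N)$ and $\Pi_N A(X_N)\Pi_N$ themselves rather than weighted surrogates of them.
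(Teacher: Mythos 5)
Your approach is essentially the paper's (compressions to piecewise-constant test functions supported on small balls for part (i), block-constant kernel approximation with a Hilbert--Schmidt bound plus Lemma~\ref{lem:sign_comparison}(ii) for part (ii), and the congruence $D^{1/2}A(X_N)D^{1/2}$ to carry signatures across). Two issues need flagging, one cosmetic, one a real gap.

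\emph{A normalization slip in (i).} With $e_i^r\eqset\fone_{B_i^r}/\sqrt{\mu(B_i^r)}$ orthonormal, the quantity
\[
(e_i^r,Ae_j^r)=\frac{1}{\sqrt{\mu(B_i^r)\mu(B_j^r)}}\int_{B_i^r}\!\int_{B_j^r}a\,d\mu\,d\mu
=\sqrt{\mu(B_i^r)\mu(B_j^r)}\;\fint_{B_i^r}\!\fint_{B_j^r}a\,d\mu\,d\mu
\]
is not the double average of $a$; it is the double average scaled by $\sqrt{\mu(B_i^r)\mu(B_j^r)}$, which tends to $0$ as $r\to0$ (unless $x_i$ is an atom), not to $a(x_i,x_j)$. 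The double average is what you get from the normalization $v_i\eqset\fone_{B_i^r}/\mu(B_i^r)$ (the paper's choice). Your conclusion still holds because $\bigl((e_i^r,Ae_j^r)\bigr)_{i,j}=D_r^{1/2}\bigl((v_i,Av_j)\bigr)_{i,j}D_r^{1/2}$ with $D_r=\diag(\mu(B_i^r))$ invertible, so the two matrices are congruent and $\bigl((v_i,Av_j)\bigr)_{i,j}\to A(X_N)$; but as written the limit claim is false and needs the congruence spelled out. (Same remark applies to your $h_i^r$ in the $\Pi_N A(X_N)\Pi_N$ bound, though there you did use the $1/\mu$ normalization, so that piece is fine.)

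\emph{A genuine gap in (ii): the tail block.} You keep a representative $x_0^N\in E_0^N=X\setminus K_N$ and set $a_N\equiv a(x_0^N,x_0^N)$ on $E_0^N\times E_0^N$. The contribution of this block to $\|a_N-a\|_{L^2(\mu\otimes\mu)}^2$ is of order $\mu(E_0^N)^2\,a(x_0^N,x_0^N)^2$ (plus a controlled tail of $a$). Your criterion for choosing $x_0^N$ controls $\mu(E_0^N)\int_X a(x_0^N,y)^2\,d\mu(y)$, which handles the mixed blocks $E_0^N\times E_j^N$, $j\ge1$, but does \emph{not} bound $\mu(E_0^N)^2\,a(x_0^N,x_0^N)^2$: since $a$ need only be continuous and $L^2$, nothing prevents $a(x_0^N,x_0^N)$ from being enormous even while $\int_X a(x_0^N,\cdot)^2\,d\mu$ is moderate (the atom at $x_0^N$, or the ball around it that makes $a(x_0^N,\cdot)$ large, can carry arbitrarily small $\mu$-mass and has no modulus controlled uniformly in $N$). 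So $\|A_N-A\|_{\mathrm{op}}\to0$ is not justified as stated. The clean fix, which is exactly what the paper does, is to drop the tail representative entirely and set the piecewise-constant kernel to zero off $K_N\times K_N$; then the off-compact contribution is bounded directly by $\iint_{(X\times X)\setminus(K_N\times K_N)}a^2\to0$, the operator $\tilde A_N$ vanishes on $\bigl(\Span\{\fone_{E_i^N}\}_{i\ge1}\bigr)^\perp$, and your Sylvester argument via $D^{1/2}$ then identifies $s_\pm(\tilde A_N)$ with $s_\pm(A(X_N))$ for $X_N=\{x_1^N,\dots\}$ (no $x_0^N$). With this modification, and the same treatment for $P_\mu AP_\mu$ (where you also need $\mu(E_i^N)>0$ so $D$ is invertible, which you did impose), part (ii) goes through.

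A minor remark: ``diameter $<1/N$'' is not by itself what you need; you need the oscillation of $a$ over each $E_i^N\times E_j^N$ to be $<\varepsilon$, which follows from uniform continuity of $a$ on $K_N\times K_N$ for a mesh depending on the modulus of continuity, not simply on $1/N$.
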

	
	\begin{proof}
		To prove~(i), let $\varepsilon>0$ be such that all the positive eigenvalues of $A(X_N)$ belong to the interval $[\epsilon,+\infty)$.
		Choose an $r>0$ sufficiently small such that all open balls $B_i \eqset B_{r}(x_i)$, $i = 1, \dots, N$, are pairwise disjoint, and for all $i,j=1,\ldots, N$ the estimate
		\begin{align}
			\label{eqn:component_perturbation}
			\left| a(x_i',x_j') - a(x_i,x_j)\right| \le \frac{\varepsilon}{2},\quad\mbox{ for all $x_i'\in B_i,\ x_j'\in B_j$} .
		\end{align}
		holds. Define
		\[
		v_i \eqset \frac{1}{\mu(B_i)} \fone_{B_i} \in L^2(X,\mu), \qquad i = 1, \dots, N, 
		\]
		(recall that $\mu(B_i) > 0$ since $x_i\in \supp \mu$).
		Let $Q_N \colon L^2(X,\mu)\to L^2(X,\mu)$ be the orthogonal projector onto $L_N \eqset \Span \{v_i\}_{i=1}^N$.
		Define the linear operator $\tilde{A}_N \colon L^2(X,\mu) \to L^2(X,\mu)$ by
		\[
		(\tilde{A}_N u)(x) \eqset \sum_{i,j=1}^N \int_X a(x_i, x_j) \fone_{B_i}(x) \fone_{B_j}(y) u(y)\, d\mu(y) .
		\]
		For every $u\in L_N$, setting $U\eqset \bigcup_{i=1}^N B_i$, we have
		\begin{equation}\label{eq_estAN1}
			\|\fone_U A u - \tilde{A}_N u\|_2 \leq \|\fone_U A u - \tilde{A}_N u\|_\infty 
			\leq \frac{\varepsilon}{2} \|u\|_1 \leq \frac{\varepsilon}{2} \|u\|_2, 
		\end{equation}
		where the first and the last estimates are just H\"{o}der inequality minding that $\mu$ is a probability measure, and the second one is due to~\eqref{eqn:component_perturbation}.
		Since $Q_N v = Q_N (\fone_U v)$ for all $v \in L^2(X,\mu)$, then from~\eqref{eq_estAN1} we get that
		the operator norm of $Q_N A Q_N - \tilde{A}_N = Q_N (A - \tilde{A}_N) Q_N$ is bounded by $\frac{\varepsilon}{2}$.
		Now, let us define the embedding operator $E_N \colon \R^N \to L_N$ as $E_N x \eqset \sum_{i=1}^N x_i v_i$, so that $\Img E_N = L_N$ and 
		\[
		(E_N^* v_i)_k= \frac{1}{\mu(B_i)} \delta_{ik} 
		\]
		with $\delta_{ik}$ standing for the Kronecker delta.
		Since
		\begin{align*}
			\tilde{A}_N(v_k)(x)&= \sum_{i=1}^N \fone_{B_i}(x) \sum_{j=1}^N \int_X a(x_i,x_j) 1_{B_j}(y) \dfrac{1}{\mu(B_k)} \fone_{B_k}(y) d\mu(y),\\
			& = \sum_{i=1}^N \fone_{B_i}(x) a(x_i,x_k) = \sum_{i=1}^N a(x_i,x_k) \mu(B_i) v_i,
		\end{align*}
		then
		$E_N^* \tilde{A}_N E_N = A(X_N)$. 
		We have therefore
		\begin{align*}
			s_+(A) &\geq s_+(Q_N A Q_N) \\
			&\qquad \mbox{by Lemma~\ref{lm spm_control1}(i) with $Q\eqset Q_N$, $B\eqset Q_N A Q_N$, $H_1=H_2=L^2(X,\mu)$}\\
			&\geq s_+(\tilde{A}_N) \quad\mbox{by the Weyl inequality}\\
			&= s_+(E_N^* \tilde{A}_N E_N) \quad\mbox{by Lemma~\ref{lm spm_control1}(iii) since $(\ker \tilde{A}_N)^\perp \subset L_N = \Img E_N$}\\
			&= s_+(A(X_N)).
		\end{align*}
		
		Now, let $\bar{Q}_N \colon L^2(X,\mu)\to L^2(X,\mu)$ be the orthogonal projector onto $L_N \cap \fone^\perp$.
		In the same way as above, $\bar{Q}_N (A - \tilde{A}_N) \bar{Q}_N$ is bounded by $\frac{\varepsilon}{2}$.
		Furthermore,
		\[
		(\fone, E_N x) = \sum_{i=1}^N (\fone, v_i) x_i = (\vone_N, x) ,
		\]
		hence $\Img(E_N \Pi_N) = L_N \cap \fone^\perp$ and $\bar{Q}_N E_N \Pi_N = E_N \Pi_N$. Therefore,
		\begin{align*}
			s_+(P_\mu A P_\mu) &\geq s_+(\bar{Q}_N A \bar{Q}_N) \quad\mbox{by Lemma~\ref{lm spm_control1} since $P_\mu \bar{Q}_N = \bar{Q}_N$}\\
			& \geq s_+(\bar{Q}_N \tilde{A}_N \bar{Q}_N) \quad\mbox{by the Weyl inequality}\\
			& \ge s_+(\Pi_N E_N^* \tilde{A}_N E_N \Pi_N) \quad\mbox{by Lemma~\ref{lm spm_control1} since $\bar{Q}_N E_N \Pi_N = E_N \Pi_N$}\\
			&= s_+(\Pi_N A(X_N) \Pi_N).
		\end{align*}
		
		The proofs of $s_-(A) \geq s_- (A(X_N))$ and $s_-(P_\mu A P_\mu) \geq s_-(\Pi_N A(X_N) \Pi_N)$ are completely symmetric.
		
		%
		
		We now prove~(ii). To this aim, for an arbitrary $\varepsilon>0$, using that $\mu$ is the Radon measure,
		we can find a compact $K\subset X$ with $\mu(X\setminus K)$ be so small that 
		\[
		\int_{(X\times X) \setminus (K\times K)} a^2(x,y)\, d\mu(x)d\mu(y) <\varepsilon^2.
		\]
		This can be done in view of by absolute continuity of the integral and of the fact that
		\begin{align*}
			\mu((X\times X) \setminus (K\times K)) &= \mu(((X\setminus K)\times X) \cup (K\times (X\setminus K))) \\
			&\leq 2 \mu(X\setminus K).
		\end{align*}
		We find a $\delta>0$, a $\delta$-net $X_N\eqset \{x_1, \ldots, x_N\} \subset K$, and a partition of $K$ into $N$ disjoint Borel subsets $B_1,\ldots, B_N$ such that
		\begin{align}
			\label{eqn:component_perturbation1}
			\left| a(x_i',x_j')-a(x_i,x_j)\right|<\varepsilon,\quad\mbox{ for all $x_i'\in B_i$, $x_j'\in B_j$}
		\end{align}
		(e.g., $B_i$ being a ``Voronoi cell'' for $x_i$).
		
		Much similar to the prof of~(i), let $Q_N\colon L^2(X,\mu)\to L^2(X,\mu)$ be the orthogonal projection onto $\Span \{\fone_{B_i}\}_{i=1}^N$ and denote by $\tilde A(X_N)\colon L^2(X,\mu)\to L^2(X,\mu)$ the linear operator defined by the formula
		\[
		(\tilde{A}_N u)(x) \eqset \sum_{i,j=1}^N \int_X a(x_i,x_j) \fone_{B_i}(x) \fone_{B_j}(y) u(y)\, d\mu(y).
		\] 
		For every $u\in L^2(X,\mu)$, we have
		\begin{align*}
			\|Q_N A Q_N u-\tilde{A}_N u\|_2 &
			\leq 
			\|Q_N A Q_N u-\tilde{A}_N u\|_\infty \leq \varepsilon \|u\|_1 \leq \varepsilon \|u\|_2, 
		\end{align*}
		and
		\begin{align*}
			\|Au - Q_N A Q_N u\|_2^2 &
			\leq \int_X \, d\mu(x) \left(\int_X \left| a(x,y)- \fone_K(x) a(x,y)\fone_K(y)\right|\cdot |u(y)|\, d\mu(y) \right)^2
			\\
			&\leq \|u\|_2^2 \int_X \, d\mu(x) \left(\int_X \left| a(x,y) - \fone_K(x) a(x,y)\fone_K(y)\right|^2\, d\mu(y) 
			\right)\\
			&\leq \varepsilon^2 \|u\|_2^2 . 
		\end{align*}
		This implies
		\[
		\|Au - \tilde{A}_N u\|_2 \leq \|A u - Q_N A Q_N u\|_2 + \|Q_N A Q_N u - \tilde{A}_N u\|_2 
		\le 2 \varepsilon \|u\|_2.
		\]
		In other words, we constructed a sequence of $X_N\subset X$ such that
		$\tilde{A}_N \to A$ as linear bounded operators over $L^2(X,\mu)$ as $N\to\infty$.
		This implies 
		\begin{equation*}\label{eq_smptildAN1}
			s_\pm (A)\leq \liminf_N s_\pm (\tilde{A}_N) 
		\end{equation*}
		by Lemma~\ref{lem:sign_comparison}(ii).
		In the same way one can show that
		\[
		s_\pm (P_\mu \tilde{A}_N P_\mu)\leq \liminf_N s_\pm (\Pi_N A(X_N) \Pi_N),
		\]
		hence concluding the proof.
	\end{proof}

	\begin{lemma}\label{lm spm_control1}
		Let $H_1$, $H_2$ be Hilbert spaces, $A$ and $B$ be linear compact self-adjoint operators over $H_2$ and $H_1$, respectively, such that
		$B = Q^* A Q$, where $Q\colon H_1 \to H_2$ is a linear bounded operator.
		Then
		\begin{itemize}
			\item[(i)] $s_\pm(B) \le s_\pm(A)$. 
			\item[(ii)] If, moreover, the codimension of the image of $Q$ is $k\in \N$, then
			\[
			s_\pm(B) \geq s_\pm(A) - k.
			\]	
			\item[(iii)]
			If $(\Img Q)^\perp \cap (\ker A)^\perp = \{0\}$, then $s_\pm(A) = s_\pm(B)$.
		\end{itemize}
	\end{lemma}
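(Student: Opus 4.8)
The plan is to reduce all three items to the variational (Glazman-type) description of the signature of a compact self-adjoint operator $A$ on a Hilbert space $H$: the number $s_+(A)$ equals the supremum of the dimensions of the linear subspaces $V\subseteq H$ on which the quadratic form $v\mapsto(Av,v)$ is positive definite, and symmetrically $s_-(A)$ is the largest dimension of a subspace on which this form is negative definite. This is immediate from the spectral theorem (on the span of $m$ eigenvectors with positive eigenvalues the form is positive definite, whereas a subspace of dimension exceeding $s_+(A)$ meets the closed span of $\ker A$ together with the eigenvectors with non-positive eigenvalues, on which the form is $\le 0$), so I would record it first. The decisive identity is that for every $v\in H_1$ one has $(Bv,v)=(Q^*AQv,v)=(AQv,Qv)$, i.e.\ the quadratic form of $B$ is that of $A$ pulled back along $Q$; everything below is a statement about how subspaces on which $(A\cdot,\cdot)$ is definite are situated with respect to $\Img Q$. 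Write $H_+\subseteq H_2$ for the span of the eigenvectors of $A$ with strictly positive eigenvalues (finite-dimensional if $s_+(A)<\infty$); then $H_+\subseteq(\ker A)^\perp$ and $(A\cdot,\cdot)$ is positive definite on $H_+$. I only treat $s_+$, the case of $s_-$ being word for word the same. For (i): if $V\subseteq H_1$ is a subspace with $(Bv,v)>0$ on $V\setminus\{0\}$, then $Q|_V$ is injective (otherwise some nonzero $v\in V$ has $(Bv,v)=(AQv,Qv)=0$), so $Q(V)$ is a subspace of $H_2$ of the same dimension on which $(A\cdot,\cdot)$ is positive definite; hence $\dim V\le s_+(A)$, and taking the supremum gives $s_+(B)\le s_+(A)$.

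For (ii): set $m:=s_+(A)$ (if $s_+(A)=\infty$, run the argument with an arbitrary finite $m$). Since a subspace of codimension $k$ meets any $m$-dimensional subspace in dimension at least $m-k$, we get $\dim\bigl(H_+\cap\Img Q\bigr)\ge m-k$. Pick a subspace $V\subseteq H_1$ mapped by $Q$ bijectively onto $H_+\cap\Img Q$ — an algebraic complement in $Q^{-1}(H_+\cap\Img Q)$ to $\ker Q$, necessarily finite-dimensional. Then $Q|_V$ is injective and $(Bv,v)=(AQv,Qv)>0$ for every $v\in V\setminus\{0\}$, because $Qv$ is a nonzero vector of $H_+$; hence $s_+(B)\ge\dim V\ge m-k=s_+(A)-k$.

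For (iii) only the inequality $s_+(B)\ge s_+(A)$ requires work, and this is the subtle point. First, the hypothesis $(\Img Q)^\perp\cap(\ker A)^\perp=\{0\}$ is equivalent to $\Img Q+\ker A$ being dense in $H_2$, hence to $P(\Img Q)$ being dense in $(\ker A)^\perp$, where $P$ is the orthogonal projection of $H_2$ onto $(\ker A)^\perp$. The naive choice, projecting $H_+$ into $\overline{\Img Q}$, does \emph{not} work, because the projected subspace may lie in the null set of the form $(A\cdot,\cdot)$; the right observation is that adding a vector of $\ker A$ to any $w\in H_2$ does not change $(Aw,w)$. So, fixing an orthonormal eigenbasis $e_1,\dots,e_m$ of $H_+$ ($m:=s_+(A)$; take $m$ arbitrary if $s_+(A)=\infty$), I would use the density of $P(\Img Q)$ in $(\ker A)^\perp\supseteq H_+$ to pick $g_i\in\Img Q$ with $Pg_i$ as close to $e_i$ as we wish. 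Writing $g_i=Pg_i+\kappa_i$ with $\kappa_i\in\ker A$, one has $\bigl(A\sum_i c_i g_i,\sum_i c_i g_i\bigr)=\bigl(A\sum_i c_i Pg_i,\sum_i c_i Pg_i\bigr)$ for all scalars $c_i$; and for a close enough approximation the $Pg_i$ are linearly independent and $(A\cdot,\cdot)$ is positive definite on $\Span\{Pg_i\}$ (a routine perturbation of $H_+$, using compactness of the unit sphere of an $m$-dimensional space and boundedness of $A$). Consequently the $g_i$ are independent and $(A\cdot,\cdot)$ is positive definite on the $m$-dimensional subspace $\Span\{g_i\}\subseteq\Img Q$. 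Lifting to $v_i\in H_1$ with $Qv_i=g_i$ and putting $V:=\Span\{v_i\}$ gives $Q|_V$ injective and $(Bv,v)=(AQv,Qv)>0$ on $V\setminus\{0\}$, so $s_+(B)\ge m$; with (i) this yields $s_+(A)=s_+(B)$. I expect the main obstacle to be precisely the identification of the hypothesis with the density of $P(\Img Q)$ in $(\ker A)^\perp$ together with the kernel-shift observation; after that, the perturbation and lifting are routine, and the proof for $s_-$ is symmetric.
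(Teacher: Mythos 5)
Your proof is correct and follows essentially the same route as the paper's: all three items rest on the identity $(Bv,v)=(AQv,Qv)$, (i) by pushing forward subspaces on which the form of $B$ is definite, (ii) by intersecting the positive eigenspace of $A$ with $\Img Q$ and lifting, and (iii) by reducing mod $\ker A$, using density of $P(\Img Q)$ in $(\ker A)^\perp$, and a finite-dimensional perturbation. The only cosmetic difference is in (iii): the paper packages the ``kernel-shift'' observation as the operator identity $A=P_A A P_A$, giving $B=(P_A Q)^*A(P_A Q)$, and then works with $\Img(P_A Q)$; you keep the argument at the level of the quadratic form, noting $(Ag,g)=(APg,Pg)$, but the two are the same statement and lead to identical approximation-and-lifting steps.
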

	
	\begin{proof}
		Let $\{u_j\}_{j=1}^n\subset H_1$ be a set of linearly independent eigenvectors of $B$ 
		corresponding to strictly positive eigenvalues. 
		The vectors $Q u_j$, $j=1,\ldots, n$ are also linearly independent
		since otherwise $\ker B \cap \Span \{u_j\}_{j=1}^n\neq \{0\}$ which is impossible.
		Hence, $(Av, v)>0$ for every $v\in \Span \{Q u_j\}_{j=1}^n$, $v\neq 0$. 
		This implies $s_+(A)\geq n$, and therefore, taking the supremum over all $n\in \N$ such that $n\leq s_+(B)$, we get $s_\pm(B) \le s_\pm(A)$, proving Claim~(i).
		
		We now prove~(ii). Let $\{w_j\}_{j=1}^m \subset H_2$ be a set of linearly independent eigenvectors of $A$ corresponding to strictly positive eigenvalues.
		Then 
		\[
		\dim \left(\Span \{w_j\}_{j=1}^m\cap \Img Q \right) \geq m - k.
		\]
		Thus we may choose at least $m-k$ linearly independent elements 
		\[
		\{v_j\}_{j=1}^{m-k} \subset \Span \{w_j\}_{j=1}^m \cap \Img Q
		\]
		Pick then linearly independent $u_j\in H_1$ with $Q u_j = v_j$, $j=1,\ldots, m-k$.
		One has 
		\[
		(u, B u) = (Q u, A Q u) > 0
		\]
		for all 
		$u\in \Span \{u_j\}_{j=1}^{m-k}$ unless $u=0$. Thus implies $s_+(B)\geq m-k$ and taking the supremum over all $m\in \N$ satisfying $m\leq s_+(A)$, we have $s_\pm(B) \geq s_\pm(A)-k$, i.e.,\ Claim~(ii).
		
		Finally, to prove Claim~(iii), we first note that for the orthogonal projector $P_A$ from $H_2$ onto $(\ker A)^\perp$ one has
		$P_A A = A$. Hence, $A = A^* = A^* P_A^* = A P_A$, and this again implies 
		\[
		A = P_A A = P_A A P_A. 
		\]
		Therefore,
		\begin{equation}\label{eq_BPAQ1}	
			B = Q^* A Q = Q^* P_A A P_A Q = Q^* P_A^* A P_A Q = (P_A Q)^* A (P_AQ).
		\end{equation}
		Furthermore,
		\[
		(\Img Q)^\perp \cap (\ker A)^\perp = (\Img Q + \ker A)^\perp = \{0\},
		\]
		hence $\Img Q + \ker A$ is dense in $H_2$, thus $\Img(P_A Q)$ is dense in $\Img P_A = (\ker A)^\perp$. 
		Let again $\{w_j\}_{j=1}^m\subset H_2$ be a set of linearly independent eigenvectors of $A$ 
		corresponding to strictly positive eigenvalues, thus the matrix $(A w_i, w_j)_{i,j=1}^m$ is positive definite. 
		Since all $w_j$ clearly belong to $(\ker A)^\perp$, then one can find $\{\tilde w_j\}_{j=1}^m \subset \Img(P_A Q)$ with each $\tilde w_j$ so close to $w_j$ that
		they are all still linearly independent and the matrix $(A \tilde w_i, \tilde w_j)_{i,j=1}^m$ is positive definite (which gives in particular $(A\tilde w,\tilde w)>0$ for all $\tilde w\in \Span \{\tilde w_j\}_{j=1}^m$, $\tilde w\neq 0$). 
		
		Choosing $u_j \in H$ linearly independent such that
		$P_A Q u_j = \tilde w_j$, in view of~\eqref{eq_BPAQ1} we get
		\[
		(u, B u) = ((P_A Q) u, A (P_A Q) u) > 0
		\]
		for all $u\in \Span \{u_j\}_{j=1}^m$, $u\neq 0$. 
		Thus, $s_+(B)\geq m$ and taking the supremum over all $m\in \N$ satisfying $m\leq s_+(A)$, we have $s_\pm(B) \geq s_\pm(A)$. 
		This together with Claim~(i) gives Claim~(iii).
	\end{proof}
	
	\begin{lemma}\label{lem:strict_triangle}
		Let $\Sigma = \{z_i\}_{i=1}^3 \subset \R^{n,p}$ be such that $\Sigma - \Sigma \subset C_{n,p}$ (thus, $d_{n,p}(z_i, z_j)$ is defined).
		Then the strict triangle inequality 
		\[
		d_{n,p}(z_1, z_3) < d_{n,p}(z_1, z_2) + d_{n,p}(z_2, z_3)
		\]
		is equivalent to the strict one-sided Cauchy-Schwarz inequality 
		\[
		(z_1 - z_2, z_2 - z_3)_{n,p} < d_{n,p}(z_1, z_2) d_{n,p}(z_2, z_3) .
		\]
	\end{lemma}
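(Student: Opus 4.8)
The plan is to reduce the claimed equivalence to the monotonicity of $t\mapsto t^2$ on $[0,\infty)$ applied after a one-line expansion. First I would set $a \eqset z_1 - z_2$ and $b \eqset z_2 - z_3$, so that $z_1 - z_3 = a + b$; by the hypothesis $\Sigma - \Sigma \subset C_{n,p}$ all three vectors $a$, $b$, $a+b$ lie in the positive cone, hence $(a,a)_{n,p}$, $(b,b)_{n,p}$, $(a+b,a+b)_{n,p}$ are nonnegative, and the three quantities $d_{n,p}(z_1,z_2) = \sqrt{(a,a)_{n,p}}$, $d_{n,p}(z_2,z_3) = \sqrt{(b,b)_{n,p}}$, $d_{n,p}(z_1,z_3) = \sqrt{(a+b,a+b)_{n,p}}$ are well-defined nonnegative reals.

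Next I would use bilinearity and symmetry of the (indefinite) form to expand
\[
(a+b,a+b)_{n,p} = (a,a)_{n,p} + 2(a,b)_{n,p} + (b,b)_{n,p},
\]
that is, $d_{n,p}(z_1,z_3)^2 = d_{n,p}(z_1,z_2)^2 + d_{n,p}(z_2,z_3)^2 + 2\,(z_1 - z_2, z_2 - z_3)_{n,p}$. Since both $d_{n,p}(z_1,z_3)$ and $d_{n,p}(z_1,z_2) + d_{n,p}(z_2,z_3)$ are nonnegative, the strict triangle inequality $d_{n,p}(z_1,z_3) < d_{n,p}(z_1,z_2) + d_{n,p}(z_2,z_3)$ holds if and only if the corresponding inequality between the squares holds, i.e.\ if and only if
\[
d_{n,p}(z_1,z_2)^2 + d_{n,p}(z_2,z_3)^2 + 2\,(z_1 - z_2, z_2 - z_3)_{n,p} < d_{n,p}(z_1,z_2)^2 + 2\,d_{n,p}(z_1,z_2)\,d_{n,p}(z_2,z_3) + d_{n,p}(z_2,z_3)^2 .
\]
Cancelling $d_{n,p}(z_1,z_2)^2 + d_{n,p}(z_2,z_3)^2$ from both sides leaves exactly $(z_1 - z_2, z_2 - z_3)_{n,p} < d_{n,p}(z_1,z_2)\,d_{n,p}(z_2,z_3)$, the asserted one-sided Cauchy--Schwarz inequality; since every step is reversible, this establishes the equivalence.

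I expect essentially no obstacle: the statement is a direct computation, and the one place the hypothesis $\Sigma - \Sigma \subset C_{n,p}$ is genuinely needed (beyond guaranteeing that the pseudo-distances are defined) is in passing between an inequality of real numbers and the inequality of their squares, which is valid precisely because both sides are manifestly nonnegative. I would record this point explicitly rather than leave it implicit, since in a pseudo-Euclidean space neither the triangle inequality nor the one-sided Cauchy--Schwarz inequality holds unconditionally.
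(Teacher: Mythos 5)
Your proof is correct and follows essentially the same route as the paper's: decompose $z_1 - z_3 = (z_1 - z_2) + (z_2 - z_3)$, expand the square by bilinearity, and observe that squaring both sides of the triangle inequality is an equivalence because both sides are nonnegative. The only difference is that you spell out the nonnegativity justification explicitly where the paper says ``clearly equivalent,'' which is a reasonable clarification.
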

	
	\begin{proof}
		The strict triangle inequality is clearly equivalent to
		\[
		d_{n,p}^2(z_1, z_3) < d_{n,p}^2(z_1, z_2) + 2 d_{n,p}(z_1, z_2) d_{n,p}(z_2, z_3) + d_{n,p}^2(z_2, z_3) .
		\]
		Since 
		\begin{align*}
		d_{n,p}^2(z_1, z_3) &\eqset (z_1 - z_3, z_1 - z_3) \\
		&= (z_1 - z_2, z_1 - z_2) + 2 (z_1 - z_2, z_2 - z_3) + (z_2 - z_3, z_2 - z_3) \\
		&= d_{n,p}^2(z_1, z_2) + 2 (z_1 - z_2, z_2 - z_3) + d_{n,p}^2(z_2, z_3) ,
		\end{align*}
		we get the result.
	\end{proof}

	\bibliographystyle{abbrv}
	\bibliography{countable-space-bib}

\end{document}